
\documentclass[amssymb,amsfonts,reqno]{amsart}

\usepackage{amssymb,amsmath,amsthm,verbatim,IEEEtrantools, mathtools}

\usepackage{latexsym}

\usepackage{graphicx}

\usepackage{bbm}

\parindent = 0 pt
\parskip = 12 pt

\def\be#1{\begin{equation}\label{#1}}
\def\bas{\begin{align*}}
\def\eas{\end{align*}}
\def\bi{\begin{itemize}}
\def\ei{\end{itemize}}

\usepackage{pdfpages}

\theoremstyle{plain}
   \newtheorem{theorem}[subsection]{Theorem}

   \newtheorem{proposition}[subsection]{Proposition}
   
   \newtheorem{lemma}[subsection]{Lemma}
   \newtheorem{corollary}[subsection]{Corollary}

\begin{document}

\author{James Wright}
\address{Maxwell Institute of Mathematical Sciences and the School of Mathematics, University of
Edinburgh, JCMB, The King's Buildings, Peter Guthrie Tait Road, Edinburgh, EH9 3FD, Scotland}
\email{j.r.wright@ed.ac.uk}

\subjclass{ 11A07; 11L40}



\title[The $H$ and $J$ functional]{The $H$ and $J$ functional in the \\
theory of complete exponential sums}

\maketitle

\begin{abstract} In this paper we introduce two nonlinear functionals, the $H$ and $J$ functional,
in the theory of complete exponential sums.
\end{abstract}

\section{Introduction}\label{intro}

Fix a prime $p$ and let $f \in {\mathbb Z}[X]$ be a polynomial with integer coefficients. In this paper
we will consider the complete exponential sum
$$
S_m(f) \ = \ p^{-m} \, \sum_{x=0}^{p^m -1} e^{2\pi i f(x)/p^m}.
$$
Let $|\cdot|$ denote the $p$-adic absolute value defined on integers $x \in {\mathbb Z}$ by $|x| = p^{-s}$
where $p^s \, | x$ but $p^{s+1} \not| \, x$. For $f\in {\mathbb Z}[X]$, we define
$$
H_f \ = \ \inf_{x \in {\mathbb Z}} \, H_f(x) \ \ \ {\rm where} \ \ \ H_f(x) \ = \  \max_{k\ge 1} \, \bigl(|f^{(k)}(x)/k!|^{1/k}\bigr)
$$
with respect to the $p$-adic absolute value $|\cdot|$. We will also use a closely associated functional
$J_f = \inf_{x\in {\mathbb Z}} J_f(x)$ where 
$J_f(x) = \max_{k\ge 2} (|f^{(k)}(x)/k!|^{1/k})$ 
is defined in the same way as $H_f(x)$ except the
maximun is taken over $k\ge 2$, using only derivatives of second order and higher.

The $J$ functional does not see the linear part of $f$. If $f_b(x) = f(x) - b x$, we note that $J_{f_b} = J_f$.

\begin{theorem}\label{exp-sum-main} Let $f \in {\mathbb Z}[X]$ have degree $d\ge 1$ and suppose $p>d$. Then
\begin{equation}\label{exp-sum-bound}
|S_m(f)| \ \le \ C_d \, \min(1,  J_{p^{-m}f}^{-1})
\end{equation}
holds for a constant $C_d$ depending only on the degree $d$. 
Furthermore there are integers $a, c \in {\mathbb Z}$, depending on $f$, such that
\begin{equation}\label{exp-sum-below}
(0.25) \, p^{-1} \,  \min(1, J_{a p^{-m} f}^{-1}) \ < \ |S_m(a f_c)| \ \le \ C_d \ \min(1, p^{-m} J_{a p^{-m} f}^{-1})
\end{equation}
where $f_c(x) = f(x) - c x$. When the $p$-adic valuation\footnote{For a nonzero
$w \in {\mathbb Z}$, the $p$-adic valuation ${\rm ord}_p(w)$ is defined by $|w| = p^{-{\rm ord}_p(w)}$.} of $J_{a f}$ is an integer, the lower bound improves to 
$(0.25) \min(1, J_{a p^{-m} f}^{-1}) \le |S_m(a f_c)|$.
\end{theorem}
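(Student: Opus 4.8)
The plan is to prove both estimates with one tool: a $p$-adic stationary‑phase descent that peels off the last $p$-adic digit of the variable, records the critical residues of the phase, and passes to a rescaled phase of smaller modulus. Write $e_q(t)=e^{2\pi i t/q}$; the bound $|S_m(f)|\le 1$ is free (a sum of $p^m$ unit‑modulus terms) and $d=1$ is trivial, so take $d\ge 2$.

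\emph{The descent step.} For $m\ge 2$ set $x=u+p^{m-1}v$; since $p>d$ each $f^{(k)}(u)/k!$ is $p$-integral and $f(x)\equiv f(u)+p^{m-1}v\,f'(u)\pmod{p^m}$, so summing over $v\bmod p$ annihilates every $u$ with $p\nmid f'(u)$. If $\overline{f'}\equiv 0$ then $f=c+p\,g$ with $g\in{\mathbb Z}[X]$ of degree $\le d$, and $S_m(f)=e_{p^m}(c)\,S_{m-1}(g)$ with $J_{p^{-(m-1)}g}\equiv J_{p^{-m}f}$ and the same zero set of the derivative — a cost‑free reduction of the modulus. Otherwise, letting $\beta$ range over the $r\le d-1$ roots of $\overline{f'}$ and substituting $u=\beta+pw$, the polynomial $\psi_\beta(w):=p^{-2}\bigl(f(\beta+pw)-f(\beta)\bigr)$ lies in ${\mathbb Z}[X]$ (its linear coefficient is divisible by $p^2$ because $p\mid f'(\beta)$), has degree $d$, and
\[
  S_m(f)=p^{-1}\sum_{\beta}e_{p^m}\!\bigl(f(\beta)\bigr)\,S_{m-2}(\psi_\beta),\qquad
  \psi_\beta^{(j)}(w)=p^{j-2}f^{(j)}(\beta+pw)\quad(j\ge 1).
\]
The second formula gives immediately the \emph{key identities}
\[
  H_{p^{-(m-2)}\psi_\beta}(w)=p^{-1}H_{p^{-m}f}(\beta+pw),\qquad
  J_{p^{-(m-2)}\psi_\beta}(w)=p^{-1}J_{p^{-m}f}(\beta+pw),
\]
so that $J_{p^{-(m-2)}\psi_\beta}=p^{-1}\inf_{x\equiv\beta\,(p)}J_{p^{-m}f}(x)\ge p^{-1}J_{p^{-m}f}$, with equality precisely for the residue $\beta$ whose ball contains a global minimiser of $J_{p^{-m}f}$ (likewise for $H$; note $H_{p^{-m}f}\ge J_{p^{-m}f}$ always, since $H_f(x)=\max(|f'(x)|,J_f(x))$).

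\emph{The upper bound \eqref{exp-sum-bound}.} Iterate the descent, building a finite rooted tree of phases; a branch terminates when its modulus exponent reaches $\le 1$ (bound $|S_1(\psi)|\le(d-1)p^{-1/2}$ by Weil, legitimate since $p>d$, or trivially), when $\overline{\psi'}$ has no root (the sum vanishes), or when $J$ has already dropped below $1$. By the key identity each non‑degenerate step multiplies the running weight by $p^{-1}$ and divides $J$ by $p$ (exactly so along the minimiser's branch), while degenerate steps are weightless and preserve $J$; hence the accumulated weight along any branch is $\le J_{p^{-m}f}^{-1}$ and each leaf contributes $\le(d-1)J_{p^{-m}f}^{-1}$ to $|S_m(f)|$. \textbf{The crux — and where the real work lies — is to bound the number of leaves by a constant depending only on $d$.} This is a finiteness statement about the zeros of $f'$: a node branches ($r\ge 2$) exactly when the at most $d-1$ zeros of $f'$ lying in the current ball split among several residue classes modulo $p$, and degenerate steps do not change that zero set, so at most $d-2$ such splittings can occur and the tree has $\le d-1$ leaves. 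Summing yields $|S_m(f)|\le(d-1)^2\min(1,J_{p^{-m}f}^{-1})$ — in fact the sharper bound with $H_{p^{-m}f}$ in place of $J_{p^{-m}f}$, whence \eqref{exp-sum-bound} by $H_{p^{-m}f}\ge J_{p^{-m}f}$.

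\emph{The lower bound \eqref{exp-sum-below}.} Here one runs the descent downward on a hand‑picked twist. Since $J_{ap^{-m}f}$ depends on $a$ only through ${\rm ord}_p a$, one has $J_{ap^{-m}f}=J_{p^{-(m-{\rm ord}_p a)}f}$; choose $\gamma^*\in{\mathbb Z}_p$ and $2\le k_0\le d$ realising the defining extremum of this quantity (the infimum over $x$ and the maximum over $k$), so that near $\gamma^*$ the $k_0$-th Taylor coefficient of $f$ dominates, at the relevant scale, every Taylor coefficient of order $\ge 2$. Take $c\in{\mathbb Z}$ approximating $f'(\gamma^*)$ and an integer approximating $\gamma^*$ so well that $f_c$ has a genuine isolated critical point near $\gamma^*$ agreeing with a pure power $u(x-\gamma^*)^{k_0}$ ($u$ a unit) to an order high enough that the descent toward $\gamma^*$ never branches; then choose ${\rm ord}_p a$ so that the modulus exponent surviving at the bottom of that descent is a multiple of $k_0$. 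The descent then terminates in a pure power Gauss sum $S_{k_0\ell}\bigl(u\,y^{k_0}+\text{higher order}\bigr)$, which does not vanish — indeed $|S_{k_0\ell}(u\,y^{k_0})|=p^{-\ell}$, again read off the descent (a pure power stays pure under one step) — while the other branches and the higher‑order corrections are each smaller by a factor $\le p^{-1}$ and hence diminish the main term by at most a bounded amount; book‑keeping gives $|S_m(af_c)|>\tfrac14\,p^{-1}\min(1,J_{ap^{-m}f}^{-1})$. The factor $p^{-1}$ is precisely the cost of the ceilings forced in this construction when ${\rm ord}_p(J_{af})$ is not an integer; when it is an integer no rounding is needed anywhere, the descent loses nothing, and one gets the improved bound $\tfrac14\min(1,J_{ap^{-m}f}^{-1})\le|S_m(af_c)|$. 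The upper bound in \eqref{exp-sum-below} is \eqref{exp-sum-bound} applied to $af_c$, using $J_{p^{-m}(af_c)}=J_{ap^{-m}f}$. The one remaining delicate point, beside the leaf count, is excluding accidental cancellation in the terminal Gauss sum and among the competing branches — which is where $p>d$ and the chosen ${\rm ord}_p a$ are used.
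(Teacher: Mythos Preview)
Your upper-bound argument via iterated $p$-adic stationary phase is a valid alternative to the paper's method, and it is essentially the classical descent behind Hua's and Loxton--Vaughan's estimates. The paper does not argue this way: it deduces \eqref{exp-sum-bound} from the sharper Theorem~\ref{H-main}, which is proved by decomposing $\{z\in\mathbb Z_p:H\le H_P(z)\}$ into the level sets $\{p^{s-1}<J_P(z)\le p^s,\;p^{v-1}<H_P(z)\le p^v\}$, showing these are unions of $p^{-s}$-balls (Lemma~\ref{Is}), killing the pieces with $s<v$ by a pure linear-phase cancellation, and bounding the remaining pieces by a structural sublevel-set estimate (Corollary~\ref{sublevel-structure-balls}) that ultimately rests on a higher-order Hensel lemma. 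Your route is more elementary and gives an explicit constant; the paper's route yields the finer $\epsilon\in\{0,1\}$ dichotomy and the $H$-functional bound, which are what drive Proposition~\ref{cor-phong-stein}.

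The lower bound \eqref{exp-sum-below}, however, has a genuine gap in your proposal. You want to choose $c\approx f'(\gamma^*)$ and follow a single descent branch down to a nonvanishing Gauss-type sum, but none of the three ingredients is secured: the dominance of the $k_0$-th Taylor coefficient at the $J$-scale does \emph{not} force $f^{(j)}(\gamma^*)=0$ for $2\le j<k_0$, so $f_c$ need not resemble a pure power near $\gamma^*$ and the descent can branch; there is no mechanism offered to prevent the terminal sum from vanishing; and ``other branches are smaller by $p^{-1}$'' is asserted, not proved. You flag the last two as delicate, but they are exactly where the constructive approach stalls without a substantial new idea.

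The paper avoids all of this with a short duality argument. Writing $\alpha_P=\sup_{w\in\mathbb Z_p,\,b\in\mathbb Q_p}\max(1,J_{wP})\,|I_{wP_b}|$, the upper bound gives $\alpha_P\le C_d$. On the other hand, with $b=P'(z_*)$ and $a=P(z_*)-bz_*$ the sublevel set $\{z:|P(z)-a-bz|\le1\}$ contains $B_{J_P^{-1}}(z_*)$, so $S_P:=\sup_{a,b}|\{z:|P(z)-a-bz|\le1\}|>p^{-1}\min(1,J_P^{-1})$. But Fourier inversion $\mathbb 1_{\mathbb Z_p}(u)=\int_{\mathbb Z_p}{\rm e}(wu)\,dw$ gives
\[
|\{z:|P(z)-a-bz|\le1\}|=\int_{\mathbb Z_p}{\rm e}(-aw)\,I_{wP_b}\,dw,
\]
so $S_P\le\alpha_P\int_{\mathbb Z_p}\min(1,J_{wP}^{-1})\,dw\le4\,\alpha_P\min(1,J_P^{-1})$, using $J_{wP}\ge|w|^{1/2}J_P$. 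Comparing the two bounds on $S_P$ forces $\alpha_P>\tfrac14 p^{-1}$, i.e.\ there exist $w,b$ with $|I_{wP_b}|>\tfrac14 p^{-1}\min(1,J_{wP}^{-1})$; applying this with $P=p^{-m}f$ gives \eqref{exp-sum-below}. The argument is non-constructive but complete, and needs nothing beyond the upper bound you already have.
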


If $d=1$, then $f(z) = b z$ for some $b\in {\mathbb Z}\setminus\{0\}$.
Taking $c = b$ and $a=1$, we see $f_c \equiv 0$ and so $S_m(f_c) = 1 = \min (1, J_{p^{-m}f}^{-1})$
since $J_{p^{-m} f} = 0$, illustrating \eqref{exp-sum-below} in this case.

If $d=2$ and $p>2$, then $f(z) = b z^2 + e z$ for some $b, e \in {\mathbb Z}$ with $b\not= 0$. If $|b| = p^{-k}$, then 
we have $|S_m(f)| = p^{-(m-k)/2}$ if $k\le m$ and $|S_m(f)| = 1$ if $m\le k$.
Also $J_{p^{-m} f} = p^{(m-k)/2}$ and so 
$$
\min(1, J_{p^{-m} f}^{-1})  \ = \ \begin{cases} p^{-(m-k)/2}, & {\rm if} \ k \le m \\
1, & {\rm if} \ m\le k
\end{cases},
$$
illustrating \eqref{exp-sum-below} with $a=1$ and $c=0$ since we have $|S_m(f)| = \min(1, J_{p^{-m}f}^{-1})$.

There is a stronger, sharper upper bound than \eqref{exp-sum-bound} involving the $H$ functional. 

We will formulate this stronger bound in greater generality.
The exponential sum $S_m(f)$ can be lifted to an oscillatory integral on the $p$-adic field (see Section \ref{notation}).
Let ${\rm e}$ be the standard nonprincipal character on the $p$-adic field ${\mathbb Q}_p$ where
${\rm e} \equiv 1$ on ${\mathbb Z}_p = \{z \in {\mathbb Q}_p: |z| \le 1\}$.\footnote{The $p$-adic absolute
value $|\cdot|$ has a unique extension to ${\mathbb Q}_p$.} We have 
$$
S_m(f) \ = \ \int_{{\mathbb Z}_p} {\rm e}(p^{-m} f(z)) \, dz.
$$
From our perspective, it is better to consider more
general, {\it local} exponential sums and/or oscillatory integrals: for $P \in {\mathbb Q}_p[X]$ and $H>0$, let
$$
I_P(H) \ = \ \int_{\{z\in {\mathbb Z}_p:  H \le H_P(z)\}} {\rm e}(P(z)) \, dz
$$
where $H_P(z) = \max_{k\ge 1} (|P^{(k)}(z)/k!|^{1/k})$. Of course if $H = H_f$
where $H_f = \inf_{z\in {\mathbb Z}_p} H_f(z)$, then 
$$
I_P \ := \ I_P(H_f) \ = \ \int_{{\mathbb Z}_p} {\rm e} (P(z)) \, dz.
$$
We note $S_m(f) = I_{p^{-m}f}$.

\begin{theorem}\label{H-main} For $P \in {\mathbb Q}_p[X]$, we have
\begin{equation}\label{I-single-refined}
I_P(H) \ = \ \epsilon \, p^{-s} \,  \sum_{t\in {\mathbb Z}/p{\mathbb Z}} e^{2\pi i Q(t)/p} \ + \ O_d(H^{-1})
\end{equation}
where $\epsilon \in \{0,1\}$. If $\epsilon = 1$, then $p^s = H = H_P(z_{*}) = |P'(z_{*})|$ and
$p^{s-1/2} = J_P(z_{*}) = |P''(z_{*})/2|^{1/2}$ for some 
$z_{*}\in {\mathbb Z}_p$. Furthermore 
\begin{equation}\label{P-derivatives}
|P^{(k)}(z_{*})/k!| \ \le \ p^{ks - (k-1)} \ \ {\rm for} \ \ 1 \ \le k \le d.
\end{equation}
Also $Q \in {\mathbb Z}[X]$ with
some coefficient of $Q$ not equal to 0 mod $p$. As a consequence, the Weil bound implies
\begin{equation}\label{I-single}
|I_P(H)| \ \le \ C_{d} \, \min(\sqrt{p} \, H^{-1}, J_P^{-1})
\end{equation}
if $p > d$.
\end{theorem}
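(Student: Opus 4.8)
The bound \eqref{I-single} is a formal consequence of \eqref{I-single-refined}. When $\epsilon = 1$, the data $|P'(z_*)| = p^s$, $|P''(z_*)/2| = p^{2s-1}$ together with \eqref{P-derivatives} say exactly that $Q$ reduces modulo $p$ to a polynomial of degree between $2$ and $d<p$ whose degree $1$ and degree $2$ coefficients are nonzero; such a polynomial is never of the form $\mathrm{const}+(g^p-g)$, so Weil's bound gives $\bigl|\sum_{t\in{\mathbb Z}/p{\mathbb Z}}e^{2\pi iQ(t)/p}\bigr|\le(d-1)\sqrt p$ and hence $|I_P(H)|\le C_d\sqrt p\,p^{-s}=C_d\sqrt p\,H^{-1}$; moreover $p^{s-1/2}=J_P(z_*)\ge J_P$ gives $\sqrt p\,H^{-1}=p^{-(s-1/2)}\le J_P^{-1}$, and when $\epsilon=0$ the error already dominates both terms of the minimum. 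So it suffices to prove \eqref{I-single-refined}. I may take $H=p^s$ to be an attained value of $H_P$ (replacing $H$ by the least attained value $\ge H$ changes neither $\{z:H_P(z)\ge H\}$ nor the error), may assume that set nonempty (else $I_P(H)=0$, $\epsilon=0$, harmless for $H\le1$ since $|I_P(H)|\le1$), and may dispose of a non-integral $s$ with $\epsilon=0$ by a cruder version of the argument below, since $\epsilon=1$ forces $s\in{\mathbb Z}$ via $|P'(z_*)|=p^s$.

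\textbf{Descent.} Slicing ${\mathbb Z}_p$ into residue classes mod $p$, substituting $z=t+pw$, and using $H_{P(t+p\,\cdot)}(w)=p^{-1}H_P(t+pw)$ gives the exact recursion
\[
I_P(p^s)\ =\ p^{-1}\sum_{t\in{\mathbb Z}/p{\mathbb Z}}I_{P_t}(p^{s-1}),\qquad P_t(w):=P(t+pw),
\]
and iterating $s$ times yields $I_P(p^s)=p^{-s}\sum_{\tau}I_{P_{[\tau]}}(1)$, $\tau$ running over ${\mathbb Z}/p^s{\mathbb Z}$ and $P_{[\tau]}(w)=P(\tau+p^s w)$. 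The terminal integrals are computed in closed form from $A:=\max_{1\le k\le d}|q_k|$ (the largest non-constant coefficient of $Q$): if $A\le p^{-1}$ then $H_Q<1$ on ${\mathbb Z}_p$ and $I_Q(1)=0$; if $A=1$ then ${\rm e}(Q)$ is locally constant, $\{H_Q\ge1\}={\mathbb Z}_p$ (a non-constant polynomial of degree $<p$ is nowhere flat to infinite order), and $I_Q(1)={\rm e}(q_0)$; if $A=p$ then ${\rm e}(Q)$ is constant on each class mod $p$, the reduction $R\in{\mathbb F}_p[X]$ of $pq_1X+\cdots+pq_dX^d$ is non-constant, and $I_Q(1)=p^{-1}{\rm e}(q_0)\sum_{u\in{\mathbb F}_p}e^{2\pi iR(u)/p}$, which vanishes when $\deg R=1$; and if $A\ge p^2$ one reruns the recursion below threshold $1$ until $A$ enters one of these ranges.

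\textbf{Collapse.} Unwinding, each residue $\tau$ contributes according to the sizes $|P^{(k)}(\tau)/k!|$, $1\le k\le d$. The governing fact is that $\deg P^{(k)}\le d-k<d$, so every sublevel set $\{z:|P^{(k)}(z)/k!|\le p^\sigma\}$ is a union of at most $d-k$ balls, whence $\{H_P<p^s\}$, $\{H_P=p^s\}$, and the coefficient-size strata above are each unions of $O_d(1)$ balls. Two elementary cancellations then do the work: on a ball where $H_P\equiv p^s$ is realised at $k=1$ --- i.e. $|P'|\equiv p^s$ and $|P^{(k)}/k!|\le p^{sk-k}$ for $k\ge2$ there --- the $p$ sub-classes recombine via $\sum_{u\in{\mathbb F}_p}e^{2\pi i(\alpha u+\beta)/p}=0$ ($\alpha\ne0$) and the ball contributes nothing; likewise any stratum whose reduction $R$ has degree $1$. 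After removing these, what survives is $O_d(1)$ balls each of measure $\le p^{-s}$ or --- after rescaling to ${\mathbb Z}_p$ --- carrying a phase with $H$-functional $>p$ throughout, hence each $O_d(p^{-s})=O_d(H^{-1})$, together with one distinguished ball $B_*=z_*+p^{s-1}{\mathbb Z}_p$. On $B_*$ the surviving structure forces $|P'(z_*)|=p^s$, $|P''(z_*)/2|=p^{2s-1}$ (so $H_P(z_*)=p^s$, $J_P(z_*)=p^{s-1/2}$) and $|P^{(k)}(z_*)/k!|\le p^{(s-1)k+1}=p^{ks-(k-1)}$ for $1\le k\le d$ --- exactly the conditions making $\widetilde P(u):=P(z_*+p^{s-1}u)$ have all non-constant coefficients in $p^{-1}{\mathbb Z}_p$, those of degrees $1$ and $2$ being unit multiples of $p^{-1}$; then ${\rm e}(\widetilde P(u))={\rm e}(P(z_*))\,e^{2\pi iR(u)/p}$ with $R\in{\mathbb F}_p[X]$ of degree in $[2,d]$, and $\int_{B_*}{\rm e}(P)=p^{-s}{\rm e}(P(z_*))\sum_{u\in{\mathbb F}_p}e^{2\pi iR(u)/p}+O_d(p^{-s})$, the correction coming from the $O_d(1)$ residues on $B_*$ at which $|P'|$ drops below $p^s$. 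Folding the root of unity ${\rm e}(P(z_*))$ into the constant term of an integer lift $Q$ of $R$ --- legitimate here, where one checks $|P(z_*)|\le p$ --- yields \eqref{I-single-refined} with $\epsilon=1$; in every other configuration all contributions are $O_d(H^{-1})$ and $\epsilon=0$.

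\textbf{Main obstacle.} The $p$-adic Taylor expansions, the rescalings, and the evaluation of $I_Q(1)$ are routine. The real work --- and the only use of $p>d$ beyond the closing Weil bound --- is the Collapse: proving that the $p^s$ residues organise into only $O_d(1)$ essential clusters (via the degree bounds on $P',\dots,P^{(d)}$), that all but one cluster is annihilated by the orthogonality relation $\sum_{u\in{\mathbb F}_p}e^{2\pi i\alpha u/p}=0$, and that the exceptional cluster produces exactly one complete sum $\sum_{t}e^{2\pi iQ(t)/p}$ together with the sharp bounds \eqref{P-derivatives}; keeping every implied constant dependent only on $d$ while the number of residues grows like a power of $p$ is the delicate point.
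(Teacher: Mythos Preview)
Your overall architecture---descent by residue classes, evaluate terminal integrals, and organise contributions via the sublevel-set structure of the derivatives---is different from the paper's route, which proceeds by decomposing the domain according to the joint levels of $J_P$ and $H_P$ and then repeatedly invokes a higher-order Hensel lemma (Lemma~\ref{hensel-L}). Your framework is reasonable and the preliminary observations (the recursion, the evaluation of $I_Q(1)$, the linear-phase cancellation) are correct. But the Collapse step, which you yourself flag as the crux, has a genuine gap.

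The tool you invoke---that $\{z:|P^{(k)}(z)/k!|\le p^\sigma\}$ is at most $d-k$ balls---is true, but it does not deliver what you claim. First, it does not pass to differences or complements: $\{H_P=p^s\}$ is a difference of two such sets, and in ${\mathbb Z}_p$ the complement of a single ball of radius $p^{-m}$ already requires $(p-1)m$ balls to describe, so the assertion that ``$\{H_P=p^s\}$ and the coefficient-size strata are each unions of $O_d(1)$ balls'' is false as stated. Second, and more seriously, the sublevel-set fact says nothing about the \emph{radii} of the balls. After you strip out the regions where the phase is effectively linear and cancels, what remains is (in the paper's language) the set where $p^{s-1}<J_P(z)\le p^s$ and $|P'(z)|\le p^s$; this set has measure $\le C_d\,p^{-s+1}$ by the sublevel bound, which only yields $|I_P(H)|\le C_d\,p\,H^{-1}$---i.e.\ Theorem~\ref{main-bound} with $n=1$, not Theorem~\ref{H-main}. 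Your claim that the surviving pieces have measure $\le p^{-s}$, or rescale to phases with $H>p$ so that one can iterate, is exactly the missing factor of $p$, and the sublevel-set count alone cannot supply it.

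The paper recovers that factor by a quite different mechanism: for each point $z$ in the surviving region it applies Lemma~\ref{hensel-L} (a Newton-type iteration with carefully tuned hypotheses on the ratios $\delta,\delta_1,\ldots,\delta_{L-1}$) to locate a zero of some $P^{(j)}$ within distance $p^{-s}$ of $z$. This is done case by case in Sections~\ref{H-first}--\ref{establishing}, stratifying by which derivative realises $J_P(z)$ and then sub-stratifying according to which auxiliary inequalities $(S_\ell)$ hold; only after all of this does one see that everything outside a single coset $p^{s-1}{\mathbb Z}_p$ is covered by $O_d(1)$ balls of radius $p^{-s}$. Your proposal offers no substitute for this machinery. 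The singling-out of the distinguished ball $B_*$ and the derivation of \eqref{P-derivatives} likewise require work you have not supplied: in the paper these come from further Hensel applications (showing $\#\mathcal F'\lesssim_d 1$) together with the coefficient analysis of Claims~1 and~2 in Section~\ref{push}.
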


In Section \ref{H-sharpness}, we will exhibit an example where $\epsilon = 1$ in \eqref{I-single-refined} occurs.

For $P \in {\mathbb Q}_p[X]$ of degree $d$, let
$P'(z) = a \prod_{\xi} (z - \xi)^{e_{\xi}}$ where $\{\xi\}$ are the distinct roots of $P'$ lying
in ${\mathbb Q}_p^{alg}$, the algebraic closure of ${\mathbb Q}_p$. The $p$-adic absolute
value $|\cdot|$ extends uniquely to ${\mathbb Q}_p^{alg}$ which we continue to denote
as $|\cdot|$. 
Theorem \ref{H-main} implies the following discrete version of a stable
bound for euclidean oscillatory integrals due to Phong and Stein \cite{PS-I}.

\begin{proposition}\label{cor-phong-stein} For $p>d \ge 2$, we have
\begin{equation}\label{phong-stein}
|I_P| \ \le \ C_d \ \max_{\xi} \min_{{\mathcal C} \ni \xi} 
\Bigl[\frac{1}{|a \prod_{\eta \notin {\mathcal C}} (\xi - \eta)^{e_{\eta}}|}\Bigr]^{\frac{1}{S({\mathcal C})+1}} 
\end{equation}
where the minimum is taken over all subsets ${\mathcal C}\subseteq \{\eta\}$ of the roots of $P'$ containing $\xi$
and $S({\mathcal C}) = \sum_{\eta \in {\mathcal C}} e_{\eta}$.
\end{proposition}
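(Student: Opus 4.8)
The plan is to localise $I_P$ to small balls about the critical points of $P$ and to apply the Weil-type bound \eqref{I-single} of Theorem~\ref{H-main} on each of them. First I would strip off the ``non-oscillatory'' part of ${\mathbb Z}_p$: at any $z_0$ with $|P'(z_0)|>\max(1,J_P(z_0))$ one can choose a ball $B$ about $z_0$ on which the higher Taylor terms of $P$ at $z_0$ are dominated by the linear one (here $p>d$ is used, so the factorials are units), and then $P$ maps $B$ bijectively onto a ball of radius $|P'(z_0)|$ times that of $B$, which exceeds $1$, while multiplying measure by $|P'(z_0)|$; since ${\rm e}$ integrates to $0$ over any ball of radius $>1$, a change of variables gives $\int_B{\rm e}(P)=0$. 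Decomposing the open set $\{z\in{\mathbb Z}_p:|P'(z)|>\max(1,J_P(z))\}$ into such balls kills its contribution to $I_P$. The complementary set $\{z\in{\mathbb Z}_p:|P'(z)|\le\max(1,J_P(z))\}$ is a union of $O_d(1)$ balls $\mathcal B$ — since $P'$ has at most $d-1$ roots this condition localises about clusters of them — each $\mathcal B$ lying about some root $\xi$ of $P'$; hence $|I_P|\le C_d\max_\xi|\int_{\mathcal B_\xi}{\rm e}(P)|$, the maximum over a subset of the roots of $P'$.

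For the local piece I would rescale $\mathcal B_\xi=\xi_0+p^j{\mathbb Z}_p$: writing $P_\xi(w)=P(\xi_0+p^jw)$ one has $\int_{\mathcal B_\xi}{\rm e}(P)=p^{-j}I_{P_\xi}$, $H_{P_\xi}(w)=p^{-j}H_P(\xi_0+p^jw)$ and $J_{P_\xi}(w)=p^{-j}J_P(\xi_0+p^jw)$. Applying \eqref{I-single} and retaining only the $J_P$-term of the minimum, the factor $p^{-j}$ cancels and one obtains
\[
\Bigl|\int_{\mathcal B_\xi}{\rm e}(P)\Bigr| \ \le \ C_d\,\Bigl(\inf_{z\in\mathcal B_\xi}J_P(z)\Bigr)^{-1}.
\]
It therefore suffices to prove $\inf_{z\in\mathcal B_\xi}J_P(z)\ge c_d\,\max_{{\mathcal C}\ni\xi}\bigl|a\prod_{\eta\notin{\mathcal C}}(\xi-\eta)^{e_\eta}\bigr|^{1/(S({\mathcal C})+1)}$, which together with the first paragraph gives \eqref{phong-stein}.

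To obtain this lower bound I would use that, because $p>d$, the factorials $2!,\dots,d!$ are $p$-adic units, so expanding $P'(z+w)=a\prod_i\bigl(w-(\xi_i-z)\bigr)$ over the roots $\xi_i$ of $P'$ counted with multiplicity gives $|P^{(k)}(z)/k!|=|a|\,\bigl|e_{d-k}(\xi_1-z,\dots,\xi_{d-1}-z)\bigr|$ for $2\le k\le d$, where $e_j$ denotes the $j$-th elementary symmetric function. Fix $z\in\mathcal B_\xi$ and a nested cluster ${\mathcal C}=\{\eta:|\xi-\eta|\le r\}$ with $r$ at least the radius of $\mathcal B_\xi$. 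Then $|z-\eta|=|\xi-\eta|>r\ge|z-\eta''|$ whenever $\eta\notin{\mathcal C}$ and $\eta''\in{\mathcal C}$, so taking $k=S({\mathcal C})+1$, i.e.\ $d-k=\sum_{\eta\notin{\mathcal C}}e_\eta$, the summand $\prod_{\eta\notin{\mathcal C}}(\eta-z)^{e_\eta}$ of $e_{d-k}(\xi_1-z,\dots)$ is the \emph{unique} term of maximal $p$-adic size — \emph{no cancellation} occurs — whence $|P^{(k)}(z)/k!|=|a|\prod_{\eta\notin{\mathcal C}}|\xi-\eta|^{e_\eta}$ and $J_P(z)\ge\bigl(|a|\prod_{\eta\notin{\mathcal C}}|\xi-\eta|^{e_\eta}\bigr)^{1/(S({\mathcal C})+1)}$. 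This holds for every such $r$ simultaneously; combined with the standard (Phong--Stein) fact that $\min_{{\mathcal C}\ni\xi}[1/|a\prod_{\eta\notin{\mathcal C}}(\xi-\eta)^{e_\eta}|]^{1/(S({\mathcal C})+1)}$ is attained at one of these nested clusters, this is precisely the lower bound for $\inf_{\mathcal B_\xi}J_P$ needed above.

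The step I expect to be the real obstacle is the last one: establishing the no-cancellation claim — that inside a critical ball the distant roots genuinely dominate $e_{d-k}$, uniformly over all nested clusters, so that $J_P$ ``sees'' every cluster at once — together with the Phong--Stein optimisation over clusters. By comparison, the decomposition of the first paragraph (including the count of $O_d(1)$ critical balls) and the rescaling argument of the second are routine $p$-adic analysis once this input is in place.
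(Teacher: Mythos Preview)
Your approach is genuinely different from the paper's, and while the overall architecture is plausible, there are gaps you have underestimated.

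\textbf{How the paper argues.} The paper does \emph{not} localise at all. It applies the refined formula \eqref{I-single-refined} of Theorem~\ref{H-main} directly to $I_P$ over all of ${\mathbb Z}_p$ and splits on the value of $\epsilon$. When $\epsilon=0$ one has $|I_P|\le C_d H_P^{-1}$, and the paper proves, for \emph{every} $z\in{\mathbb Z}_p$ and \emph{every} cluster $\mathcal C$ containing the root $\xi$ closest to $z$, the pointwise inequality $Q_{\mathcal C}^{1/(S(\mathcal C)+1)}\le H_P(z)$ where $Q_{\mathcal C}=|a\prod_{\eta\notin\mathcal C}(\xi-\eta)^{e_\eta}|$. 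The argument stratifies the roots by their distance $A_1<A_2<\cdots<A_t$ from $z$, derives the two families of inequalities $Q_{\mathcal C}\le A_{k+1}^{\rho_k-S(\mathcal C)}H_P(z)^{1+\rho_k}$ (one from $|P'(z)|$, the others from the no-cancellation identity $|P^{(1+\rho_k)}(z)/\rho_k!|=|a\mathcal F_k(\xi)|$), and then interpolates between consecutive $k$'s depending on where $S(\mathcal C)$ falls in $(\rho_k,\rho_{k+1}]$. When $\epsilon=1$ the paper uses the additional information \eqref{P-derivatives} and a Newton-polygon argument to pin down $A_1=p^{-s+1}$ exactly, after which a single application of the $|P'(z_*)|$-bound suffices for all $\mathcal C$.

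\textbf{Where your plan diverges and where the gaps are.} You use only the coarse consequence $|I_{P_\xi}|\le C_d J_{P_\xi}^{-1}$ of \eqref{I-single}, which forces you to (i) first localise, and (ii) bound $J_P$ rather than $H_P$. Point (ii) is a real loss: the $k=0$ inequality in the paper's interpolation comes from $|P'(z)|$, which $J_P$ does not see. Your substitute is the claim that $\max_{\mathcal C\ni\xi}Q_{\mathcal C}^{1/(S(\mathcal C)+1)}$ is always realised at a \emph{nested} cluster — the derivative orders $1+\rho_k$ you can access via $J_P$. You label this a ``standard Phong--Stein fact'', but it is not proved in the paper (the paper's interpolation \emph{avoids} it), and while it is true in the ultrametric setting it requires its own argument. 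Moreover your no-cancellation step only treats nested clusters with $r$ at least the radius of $\mathcal B_\xi$; you have not argued that the optimal nested cluster satisfies this constraint, nor tied the radius of $\mathcal B_\xi$ to the root geometry. Finally, the assertion that $\{|P'|\le\max(1,J_P)\}$ decomposes into $O_d(1)$ balls ``each about some root $\xi$'' is not routine: the roots live in ${\mathbb Q}_p^{\rm alg}$, not ${\mathbb Z}_p$, and the level-set structure of $J_P$ is itself nontrivial. The paper's proof of Theorem~\ref{main-bound} shows that this kind of decomposition takes real work (Lemma~\ref{Is}, Lemma~\ref{s<v}, and the sublevel bounds of Section~\ref{structure}).

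In short: your route could likely be made to work, but the ``real obstacle'' is not only the last step. The localisation and the reduction to nested clusters each hide an argument comparable in difficulty to the paper's direct interpolation, which handles all clusters at once and never needs to localise.
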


In Section \ref{PS-cor}, we will show how Theorem \ref{H-main} implies Proposition \ref{cor-phong-stein}.
Previously \eqref{phong-stein} was established
with an additional factor of $p^2$. See \cite{W-MRL}.
Applying the bound \eqref{phong-stein} with singletons ${\mathcal C} = \{\xi\}$ 
immediately implies a celebrated bound due to Loxton and Vaughan for the complete
exponential sums $S_m$.

\begin{corollary}\label{loxton-vaughan} \cite{LV} For $p>d \ge 2$, we have
\begin{equation}\label{LV-1}
|S_m(f)| \ \le \ C_d \, \max_{\xi} 
\Bigl[\frac{p^{-m}}{|a \prod_{\eta\not= \xi} (\xi - \eta)^{e_{\eta}}|}\Bigr]^{1/(e_{\xi}+1)}.
\end{equation}
\end{corollary}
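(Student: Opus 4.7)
The plan is essentially to specialize Proposition \ref{cor-phong-stein} to the polynomial $P = p^{-m} f$ and restrict the minimum over subsets $\mathcal{C} \ni \xi$ to the singletons $\mathcal{C} = \{\xi\}$, as the paper suggests. The first step is to recall from the excerpt that $S_m(f) = I_{p^{-m}f}$, so $|S_m(f)| = |I_{p^{-m}f}|$ and Proposition \ref{cor-phong-stein} applies directly with $P = p^{-m} f \in \mathbb{Q}_p[X]$.

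Next I would factor $P'$ and track the constant. Writing $f'(z) = a \prod_\xi (z-\xi)^{e_\xi}$, we get $P'(z) = p^{-m} f'(z) = (p^{-m} a) \prod_\xi (z-\xi)^{e_\xi}$, so the roots $\{\xi\}$ of $P'$ coincide with those of $f'$ with the same multiplicities, while the leading coefficient becomes $p^{-m} a$. Using the $p$-adic absolute value, $|p^{-m} a| = p^m |a|$, so $1/|p^{-m} a| = p^{-m}/|a|$.

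Now I apply \eqref{phong-stein}. For a fixed root $\xi$ of $P'$, the minimum over all $\mathcal{C} \ni \xi$ is trivially bounded above by the value at $\mathcal{C} = \{\xi\}$, for which $S(\mathcal{C}) = e_\xi$ and $\prod_{\eta \notin \mathcal{C}}(\xi - \eta)^{e_\eta} = \prod_{\eta \neq \xi}(\xi - \eta)^{e_\eta}$. Substituting these into \eqref{phong-stein} gives
\[
|S_m(f)| \;=\; |I_{p^{-m} f}| \;\le\; C_d \, \max_\xi \Bigl[\frac{1}{|p^{-m} a \, \prod_{\eta \neq \xi}(\xi - \eta)^{e_\eta}|}\Bigr]^{1/(e_\xi + 1)} \;=\; C_d \, \max_\xi \Bigl[\frac{p^{-m}}{|a \, \prod_{\eta \neq \xi}(\xi - \eta)^{e_\eta}|}\Bigr]^{1/(e_\xi + 1)},
\]
which is exactly \eqref{LV-1}.

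There is no real obstacle here; the entire proof is a one-line specialization, with the only bookkeeping being to confirm that the leading coefficient of $P'$ is $p^{-m} a$ and to convert $|p^{-m} a|$ correctly under the $p$-adic absolute value. The main conceptual content of the bound lies in Proposition \ref{cor-phong-stein} (and ultimately in Theorem \ref{H-main}), not in this deduction.
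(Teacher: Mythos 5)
Your proof is correct and follows exactly the paper's intended argument: the paper itself proves the corollary in a single sentence ("Applying the bound \eqref{phong-stein} with singletons $\mathcal{C} = \{\xi\}$ immediately implies..."), and your write-up simply spells out the bookkeeping of specializing Proposition \ref{cor-phong-stein} to $P = p^{-m}f$, including the correct observation that $|p^{-m}a| = p^m|a|$ converts the leading-coefficient normalization into the $p^{-m}$ numerator in \eqref{LV-1}.
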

Strictly speaking Theorem 1 in \cite{LV} states \eqref{LV-1} with the powers $1/(e_{\xi}+1)$ replaced
by $1/(e+1)$ where $e = \max_{\xi} e_{\xi}$. However, as the authors remark, the argument in \cite{LV} also gives
the sharper bound \eqref{LV-1}. In \cite{LV}, the constant $C_d$ in \eqref{LV-1} can be taken to be
$C_d = d-1$ whereas our argument only gives $C_d = (d-1)d/2$. 

As the coefficients of the polynomial $P(x) = c_1 x + \cdots c_d x^d \in {\mathbb Q}_p[X]$ get large, say 
$\max_j |c_j| \to \infty$ (this corresponds to $m\to\infty$ for the complete exponential sums $S_m$),
then it is the singletons ${\mathcal C} = \{\xi\}$ which govern the behaviour and the Loxton-Vaughan
bound becomes very sharp. On the other hand, if the coefficients stay bounded and the roots of $P'$ start
to cluster together, then larger collections of roots ${\mathcal C}$ 
govern the behaviour of the exponential sum. The advantage of the bound
\eqref{phong-stein} is that it is stable under perturbations of the polynomial phase $P$. This is what makes
the Phong-Stein bound for euclidean oscillatory integrals 
very useful.

As an example, consider $f(x) = (x - n_1)^{f} (x-n_2)^{e}(x-n_3)^{e} \in {\mathbb Z}[X]$ with integer
roots $n_1, n_2, n_3 \in {\mathbb Z}$ where $|n_1 - n_2| = |n_1 - n_3| = p^{-s}, \ |n_2 - n_3|=p^{-t}$
and $t \gg s$. Then the bound \eqref{phong-stein} implies
$$
|S_m(f)| \ \lesssim \ \min\bigl( p^{[-m + f s + e t]/(1+e)}, \, p^{[-m+f s]/(1+2e)} \bigr).
$$
The singletons ${\mathcal C} = \{\xi\}$ govern the behaviour in the regime $f s + (1+2e) t \le m$ but when
$n_2 \to n_3$ or $|n_2 - n_3| = p^{-t} \to 0$ as $m$ stays bounded, then the two roots $\{n_2, n_3\}$, each
with multiplicity $e$ should be considered as a single root with multiplicity $2e$ and the estimate above
remains stable as we make a transition from a bound for singletons to a bound corresponding to a larger cluster of roots.

\subsection*{Polynomials in many variables} 

An advantage of our methods is that it can treat local objects such as $I_P(H)$ and this allows us (almost for free)
to treat exponential sums for polynomials of many variables.

Let $P \in {\mathbb Q}_p[X_1,\ldots, X_n]$ be a polynomial of degree $d$ in $n$ variables
with coefficients in the $p$-adic field ${\mathbb Q}_p$; that is (notation will be defined in Section \ref{notation}),
$$
P({\underline{z}}) \ = \ \sum_{|\alpha| \le d} c_{\alpha} \, {\underline{z}}^{\alpha} \ \ \ {\rm where} \ \ \
c_{\alpha} \in {\mathbb Q}_p \ \ {\rm for \ all} \ \alpha \in {\mathbb N}_0^n.
$$
For ${\underline{z}} \in {\mathbb Z}_p^n$, set
$$
H({\underline{z}}) \ = \ \max_{|\alpha|\ge 1} \  
\Bigl| \frac{\partial^{\alpha}P({\underline{z}})}{\alpha!} \Bigr|^{1/|\alpha|}.
$$
We consider the oscillatory integral
$$
I_P(H) \ = \ \int_{\bigl\{{\underline{z}} \in {\mathbb Z}_p^n : \, H \le H({\underline{z}})\bigr\}} 
{\rm e}(P({\underline{z}})) \, d{\underline{z}} .
$$

\vskip 10pt

\begin{theorem}\label{main-bound}
We have
\begin{equation}\label{I-several}
|I_P(H)| \ \le \ C_{d,n} \ p \, H^{-1}.
\end{equation}
\end{theorem}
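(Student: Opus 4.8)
The plan is to reduce the multidimensional estimate to the one-variable bound \eqref{I-single} of Theorem \ref{H-main} by slicing ${\mathbb Z}_p^n$ along generic lines. First I would dispose of the trivial range: if $H \le p$ then $pH^{-1} \ge 1 \ge |I_P(H)|$ and there is nothing to prove, so we may assume $H$ is large. Next, observe that on the domain of integration every point ${\underline{z}}$ satisfies $H \le H({\underline{z}})$, which means that for each such ${\underline{z}}$ there is a multi-index $\alpha$ with $|\alpha|\ge 1$ and $|\partial^\alpha P({\underline{z}})/\alpha!|^{1/|\alpha|} \ge H$. The key point is that a directional derivative of $P$ of order $k$ in a direction ${\underline{w}} \in {\mathbb Z}_p^n$ is a linear combination (with binomial-type integer coefficients) of the partials $\partial^\alpha P$ with $|\alpha| = k$, and for ${\underline{w}}$ outside a proper subvariety mod $p$ this combination does not lose $p$-adic size. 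So for a positive proportion (in fact all but $O_{d,n}(p^{-1})$ measure, but we only need a fixed direction) of unit directions ${\underline{w}}$, the restricted polynomial $t \mapsto P({\underline{z}} + t{\underline{w}})$ has $H_{P(\cdot\, {\underline{w}})}$-functional comparable to $H({\underline{z}})$.

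The main step is then a Fubini/foliation argument. Fix a direction ${\underline{w}} = (w_1,\dots,w_n)$, say with $w_n$ a unit, and write ${\underline{z}} = {\underline{y}} + t {\underline{w}}$ by an affine change of variables that is measure-preserving on ${\mathbb Z}_p^n$; integrate first in $t$ along each line and then in the $(n-1)$ transverse coordinates ${\underline{y}}$. Along the line through ${\underline{y}}$ the inner integral is of the form $I_{P_{{\underline{y}}}}(H')$ for the one-variable polynomial $P_{{\underline{y}}}(t) = P({\underline{y}} + t{\underline{w}})$ of degree $\le d$, where the threshold $H'$ is $\asymp H$ because, as noted above, $H_{P_{{\underline{y}}}}(t) \asymp_{d} H({\underline{y}}+t{\underline{w}})$ for a good choice of direction. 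Applying \eqref{I-single} in the form $|I_{P_{{\underline{y}}}}(H')| \le C_d \sqrt{p}\, (H')^{-1} \le C_d' \, \sqrt{p}\, H^{-1}$ and then integrating the (trivially bounded by $1$) measure in ${\underline{y}}$ gives $|I_P(H)| \le C_{d,n} \sqrt{p}\, H^{-1}$. To upgrade $\sqrt p$ to the claimed $p$: this is a harmless weakening, since $\sqrt p \le p$, so in fact the slicing already yields the theorem with $\sqrt p$ in place of $p$; I would simply state the bound with the weaker constant $p$ to keep the foliation argument robust against the loss incurred in comparing the restricted $H$-functional with $H$.

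The step I expect to be the main obstacle is the comparison $H_{P_{{\underline{y}}}}(t) \asymp_d H({\underline{y}}+t{\underline{w}})$, i.e. showing that restricting to a generic line does not destroy the size of the highest-order derivative data. The inequality $H_{P_{{\underline{y}}}}(t) \le H({\underline{y}}+t{\underline{w}})$ is immediate from the chain rule and the ultrametric inequality (each $k$-th derivative of the restriction is an integer combination of order-$k$ partials, and $p>d$ guarantees the binomial coefficients are units so no gain either). The reverse inequality is where one must choose ${\underline{w}}$: the order-$k$ part of $P$ restricted to the line in direction ${\underline{w}}$ is a homogeneous form of degree $k$ in ${\underline{w}}$ evaluated on the coefficient tensor of $\partial^\alpha P$, $|\alpha|=k$; this form is not identically zero mod $p$ precisely when some $\partial^\alpha P$ is a unit, and since $p > d \ge k$ its nonvanishing locus mod $p$ has density $\le d/p < 1$, so a direction ${\underline{w}} \in ({\mathbb Z}_p^\times)^n$ avoiding all $d$ of these loci (one per order $k$ that matters for ${\underline{y}}+t{\underline{w}}$ on the relevant sublevel set) exists by a union bound, at which point $|P_{{\underline{y}}}^{(k)}(t)/k!| \ge |\partial^\alpha P({\underline{y}}+t{\underml}{w}})/\alpha!|$ for the dominant $\alpha$, up to a unit factor. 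Making the "generic direction works simultaneously for all ${\underline{y}}$ on the sublevel set" uniform is the delicate bookkeeping; it is handled by noting the loci to avoid depend only on $p$ and the degree, not on ${\underline{y}}$.
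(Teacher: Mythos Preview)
Your slicing strategy has a genuine gap at exactly the point you flag as delicate: the claim that the bad loci for the direction ${\underline{w}}$ ``depend only on $p$ and the degree, not on ${\underline{y}}$'' is false. The $k$th directional derivative satisfies
\[
\frac{({\underline{w}}\cdot\nabla)^k P({\underline{z}})}{k!} \;=\; \sum_{|\alpha|=k} \frac{\partial^\alpha P({\underline{z}})}{\alpha!}\,{\underline{w}}^\alpha,
\]
so the set of ${\underline{w}}$ for which this drops below $\max_{|\alpha|=k}|\partial^\alpha P({\underline{z}})/\alpha!|$ is the zero locus (mod $p$) of a form whose \emph{coefficients} are the $\partial^\alpha P({\underline{z}})$; this locus moves with ${\underline{z}}$. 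Concretely, for $n=2$ and $P=p^{-m}(z_1^2-z_2^2)$ the directional first derivative is $2p^{-m}(w_1 z_1-w_2 z_2)$, which vanishes along $w_1 z_1=w_2 z_2$; for \emph{every} fixed unit direction $(w_1,w_2)$ there is a set of ${\underline{z}}$ of positive measure where $|\nabla P({\underline{z}})|=p^m$ but $H_{P_{\underline{y}}}(t)\le p^{m/2}$. Hence $H_{P_{\underline{y}}}(t)\asymp H({\underline{y}}+t{\underline{w}})$ cannot hold uniformly for a single ${\underline{w}}$, and the domain $\{t:H({\underline{y}}+t{\underline{w}})\ge H\}$ on which you must integrate is \emph{not} of the form $\{t:H_{P_{\underline{y}}}(t)\ge H'\}$ to which \eqref{I-single} applies. (You also invoke $p>d$, which Theorem \ref{main-bound} does not assume, and you appeal to Theorem \ref{H-main}, whose proof in this paper is built on top of the machinery of Theorem \ref{main-bound} for $n=1$; that dependency would have to be reversed.)

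The paper circumvents the single-direction obstruction in two ways. First, it does not foliate the oscillatory integral itself; instead it decomposes the domain into level sets $E_{s,v}$ of $J_P$ and $H_P$, shows the pieces with $s<v$ vanish exactly (the phase is linear to leading order on balls of radius $p^{-s}$), and bounds the remaining diagonal pieces by the measure of a \emph{sublevel set} for $\nabla P$. Second, to estimate that sublevel set it uses Lemma \ref{V-basis} to write $\partial^\alpha=\sum_j c_j({\underline{u}}_j\cdot\nabla)^k$ with a \emph{finite basis} of directions $\{{\underline{u}}_j\}$ independent of ${\underline{z}}$, so that for every ${\underline{z}}$ at least one directional derivative is large; each resulting piece is then sliced and controlled by the one-variable sublevel bound \eqref{sublevel-scale-invariant}, not by the one-variable oscillatory bound. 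The factor $p$ in \eqref{I-several} arises from the geometric sum over levels $s$, not from a $\sqrt{p}$ coming from Weil.
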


{\bf Remark}: 
Of course if we set $H_P = \inf_{{\underline{z}} \in {\mathbb Z}_p^n} H({\underline{z}})$ and
$$
I_P \ = \ \int_{{\mathbb Z}_p^n} {\rm e}(P({\underline{z}})) \, d{\underline{z}},
$$
then \eqref{I-several} implies $|I_P| \le C_{d,n} \, p \, H_P^{-1}$.

Let $Q \in {\mathbb Z}[X_1,\ldots, X_n]$ be a polynomial of degree $d$ in $n$ variables with
integer coefficients. 
The argument we give to prove Theorem \ref{main-bound} gives a bound on the number of solutions
to the polynomial congruence
\begin{equation}\label{poly-congruence}
Q(x_1, \ldots, x_n) \ \equiv \ a \ \ {\rm mod} \ \ p^{\alpha}.
\end{equation}
Set
$P_{\alpha}({\underline{x}}) = p^{-\alpha} Q({\underline{x}})$ and ${\mathcal H}_{\alpha} = H_{P_{\alpha}}$.
If $N_{a,\alpha}$ denotes the number of solutions to \eqref{poly-congruence}, 
then we have the following bound.

\begin{proposition}\label{congruence-bound} We have
\begin{equation}\label{congruence-bound-several}
\sup_{a\in {\mathbb Z}}  p^{-n\alpha} N_{a, \alpha} \ \le \ C_{d,n} \, 
\min(1, {\mathcal H}_{\alpha}^{-1}).
\end{equation}
When $n=1$, we also have a general lower bound:
\begin{equation}\label{congruence-n=1}
p^{-1} \min(1, {\mathcal H}_{\alpha}^{-1}) \ < \ \sup_{a \in {\mathbb Z}} \,
p^{-\alpha} \, N_{a, \alpha} \ \le \ C_d \, \min(1, {\mathcal H}_{\alpha}^{-1})
\end{equation}
with an improvement to 
$\min(1, {\mathcal H}_{\alpha}^{-1}) \le \sup_{a \in {\mathbb Z}} p^{-\alpha} N_{a,\alpha} \le C_d {\mathcal H}_{\alpha}^{-1}$
when the $p$-adic valuation of ${\mathcal H}_{\alpha}$ is an integer.
\end{proposition}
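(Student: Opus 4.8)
The two bounds are independent; \eqref{congruence-bound-several} is the substantial one, and the plan is to run the argument that proves Theorem~\ref{main-bound} with the additive character ${\rm e}(\cdot)$ deleted, retaining only the measure of each piece. Writing $P_\alpha = p^{-\alpha}Q$, orthogonality on $({\mathbb Z}/p^\alpha{\mathbb Z})^n$ identifies the normalised count with a sublevel measure, $p^{-n\alpha}N_{a,\alpha} = \mu_n\bigl(\{{\underline z}\in{\mathbb Z}_p^n : |P_\alpha({\underline z}) - a| \le p^{-\alpha}\}\bigr)$, and since both conditions see only residues mod $p^\alpha$ and ${\mathbb Z}$ is dense in ${\mathbb Z}_p$ it suffices to bound $\sup_{c\in{\mathbb Q}_p}\mu_n(\{|P_\alpha - c|\le 1\})$. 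The trivial bound $N_{a,\alpha}\le p^{n\alpha}$ supplies the ``$1$'' in $\min(1,\cdot)$, so one assumes ${\mathcal H}_\alpha>1$ and aims for $C_{d,n}{\mathcal H}_\alpha^{-1}$; note that ${\mathcal H}_\alpha\le p^\alpha$ because $|\partial^\beta Q({\underline z})/\beta!|\le 1$, so $p^{-\alpha}\le{\mathcal H}_\alpha^{-1}$ and any contribution of size $O_d(p^{-\alpha})$ is acceptable.

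One proceeds by induction on $\alpha$ (with $d$ and $n$ fixed, the polynomial ranging over ${\mathbb Z}_p[{\underline X}]$), partitioning ${\mathbb Z}_p^n$ into classes mod $p$. If $Q\equiv c_0\bmod p$ then $Q=c_0+pQ_1$ with $Q_1\in{\mathbb Z}_p[{\underline X}]$ of degree $\le d$, and $N_{a,\alpha}(Q)=p^n N_{a',\alpha-1}(Q_1)$, ${\mathcal H}_\alpha = H_{p^{-(\alpha-1)}Q_1}$ (a constant is invisible to $H$), so this case closes by induction with no loss. Otherwise $Q\bmod p$ is nonconstant; the classes with $\nabla Q\not\equiv 0\bmod p$ contribute, by Hensel lifting together with the bound $\#\{{\underline z}\in{\mathbb F}_p^n:Q({\underline z})\equiv a\}\le d\,p^{n-1}$, a total of at most $d\,p^{-\alpha}\le d\,{\mathcal H}_\alpha^{-1}$ after normalisation. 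On each remaining (singular) class one substitutes ${\underline z}={\underline z}_0+p{\underline w}$ and is left with $\tilde Q\in{\mathbb Z}_p[{\underline w}]$ of degree $\le d$ satisfying $H_{p^{-(\alpha-2)}\tilde Q}=p^{-1}\inf_{{\underline z}\in{\underline z}_0+p{\mathbb Z}_p^n}H_{P_\alpha}({\underline z})\ge p^{-1}{\mathcal H}_\alpha$, to which one wishes to recurse. The one-variable ingredient used along the way is the standard $p$-adic van der Corput / Lagrange-interpolation sublevel estimate: for $\phi\in{\mathbb Q}_p[t]$ of degree $\le d<p$, $\mu_1(\{t\in{\mathbb Z}_p:|\phi(t)-c|\le 1\})\le C_d\,(\max_{1\le k\le d}|\phi^{(k)}(t_0)/k!|)^{-1/d}$ once that maximum exceeds $1$. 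Crucially the factor $p$ of \eqref{I-several} does not reappear here: a count over ${\mathbb F}_p^n$ already gains a full $p^{-1}$ (it is $\le d\,p^{n-1}$), unlike a character sum, which is why the final bound is $p$-free.

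The hard part is carrying out the singular step uniformly. The crude split just described hands each of up to $(d-1)p^{n-1}$ singular classes back to the induction carrying the full constant $C_{d,n}$, while the single $p^{-1}$ gained from recursing inside a hypersurface cancels only one power of $p$; for $d\ge 2$ the recursion then requires $d+(d-1)C_{d,n}\le C_{d,n}$, which is impossible, and naive bookkeeping produces a constant growing like $(d-1)^{\alpha/2}$. One must instead use the finer Newton-polygon-driven decomposition underlying Theorem~\ref{main-bound}: after $O_{d,n}(1)$ $p$-adic affine changes of variables, ${\mathbb Z}_p^n$ is cut into $O_{d,n}(1)$ families of balls on each of which the sublevel set reduces to a single ball of measure $\le C_{d,n}\min(1,{\mathcal H}_\alpha^{-1})$, the number of families being bounded in terms of $d$ and $n$ alone, \emph{uniformly} in $\alpha$ (the valuations of the coefficients) and in $p$. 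As a quick but lossy check one can bypass this: Fourier inversion gives $p^{-n\alpha}N_{a,\alpha}=p^{-\alpha}+\sum_{k=1}^{\alpha}p^{-\alpha}\sum_{b\in({\mathbb Z}/p^k{\mathbb Z})^\times}e^{-2\pi iab/p^k}I_{bP_k}$ with $P_k=p^{-k}Q$; using $|b|=1\Rightarrow H_{bP_k}=H_{P_k}$, $P_k=p^{\alpha-k}P_\alpha\Rightarrow H_{P_k}\ge p^{-(\alpha-k)/d}{\mathcal H}_\alpha$, the estimate $|I_{bP_k}|\le C_{d,n}p\,H_{P_k}^{-1}$ of \eqref{I-several}, and summing the resulting geometric series in $k$, one gets $p^{-n\alpha}N_{a,\alpha}\le C_{d,n}\,p\min(1,{\mathcal H}_\alpha^{-1})$ --- which is \eqref{congruence-bound-several} up to the superfluous factor $p$.

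For the lower bound \eqref{congruence-n=1} with $n=1$ the argument is elementary. Since each $z\mapsto|P_\alpha^{(k)}(z)/k!|$ is continuous on the compact set ${\mathbb Z}_p$, the infimum defining ${\mathcal H}_\alpha$ is attained: $H_{P_\alpha}(z_\ast)={\mathcal H}_\alpha$ for some $z_\ast$. Put $r=\max(0,\lceil\log_p{\mathcal H}_\alpha\rceil)$; then $0\le r\le\alpha$ as ${\mathcal H}_\alpha\le p^\alpha$. From $|P_\alpha^{(k)}(z_\ast)/k!|^{1/k}\le{\mathcal H}_\alpha\le p^{r}$ one gets $|P_\alpha^{(k)}(z_\ast)/k!|\le p^{rk}$, so for $|h|\le p^{-r}$ each Taylor term satisfies $|Q^{(k)}(z_\ast)/k!|\,|h|^k=p^{-\alpha}|P_\alpha^{(k)}(z_\ast)/k!|\,|h|^k\le p^{-\alpha}$; hence $Q$ is constant mod $p^\alpha$ on $z_\ast+p^r{\mathbb Z}_p$. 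Taking $a\in{\mathbb Z}$ with $a\equiv Q(z_\ast)\bmod p^\alpha$, all $p^{\alpha-r}$ residues mod $p^\alpha$ in that ball solve \eqref{poly-congruence}, so $\sup_a p^{-\alpha}N_{a,\alpha}\ge p^{-r}$. Because $\lceil x\rceil<x+1$ strictly, $p^{-r}>p^{-1}{\mathcal H}_\alpha^{-1}$ when ${\mathcal H}_\alpha>1$, while if ${\mathcal H}_\alpha\le 1$ then $r=0$, $Q$ is constant mod $p^\alpha$ on all of ${\mathbb Z}_p$, and $p^{-\alpha}N_{a,\alpha}=1$; in both cases $\sup_a p^{-\alpha}N_{a,\alpha}>p^{-1}\min(1,{\mathcal H}_\alpha^{-1})$. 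When ${\rm ord}_p{\mathcal H}_\alpha\in{\mathbb Z}$ one has $p^{-r}=\min(1,{\mathcal H}_\alpha^{-1})$ exactly, which is the asserted improvement. The remaining inequality in \eqref{congruence-n=1} is the $n=1$ case of \eqref{congruence-bound-several}.
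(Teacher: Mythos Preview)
Your treatment of the lower bound \eqref{congruence-n=1} is correct and matches the paper's argument in Section~\ref{lower} essentially verbatim: pick $z_\ast$ realising the infimum, Taylor expand, and observe that a ball of radius $\mathcal H_\alpha^{-1}$ around $z_\ast$ lies in the sublevel set.

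For the upper bound \eqref{congruence-bound-several}, however, your proposal has a genuine gap. You correctly identify that the naive induction on $\alpha$ does not close (the recursion $d+(d-1)C_{d,n}\le C_{d,n}$ is impossible), but your escape route---the ``finer Newton-polygon-driven decomposition underlying Theorem~\ref{main-bound}''---is only a gesture, not an argument, and in any case is not what either Theorem~\ref{main-bound} or Proposition~\ref{congruence-bound} actually rests on. Your Fourier-inversion fallback is honest but loses the factor of $p$ you are trying to remove.

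The paper's proof (Section~\ref{upper}) avoids induction on $\alpha$ entirely. The idea is to decompose the sublevel set $S=\{|P(\underline z)|\le 1\}$ according to \emph{which} multi-index $\alpha$ realises the maximum $H_P(\underline z)=|\partial^\alpha P(\underline z)/\alpha!|^{1/|\alpha|}$, writing $S\subseteq\bigcup_\alpha S_\alpha$. On each $S_\alpha$ one uses the elementary linear-algebra fact (Lemma~\ref{V-basis}) that every $\partial^\alpha$ with $|\alpha|=k$ is an $O_{d,n}(1)$-bounded ${\mathbb Q}_p$-combination of $k$th-order directional derivatives $(\underline u\cdot\nabla)^k$; this converts the condition on $\partial^\alpha P$ into a condition on some $(\underline u\cdot\nabla)^k P$. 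A unimodular change of variables aligning $\underline u$ with a coordinate axis reduces to slices, and on each slice one applies the one-variable structural sublevel estimate (Corollary~\ref{sublevel-structure-balls}, in its scaled form~\eqref{sublevel-scale-invariant}) with the \emph{correct} exponent $1/k$ rather than the crude $1/d$. That estimate in turn comes from Proposition~\ref{sublevel-structure-basic}, which shows the sublevel set is covered by $O_d(1)$ balls centred at zeros of derivatives of $Q$---this is the substitute for Hensel lifting that works without any nonsingularity hypothesis and is what makes the constant genuinely independent of $p$ and $\alpha$. Your proposal never invokes Lemma~\ref{V-basis} or the structural Proposition~\ref{sublevel-structure-basic}, and these are the two ingredients that close the argument.
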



\subsection*{Structure of the paper}  In the next two sections we set some notation, review basic
analysis on the $p$-adic field and establish some preliminary results. In Section \ref{lower}, we establish
the lower bound \eqref{congruence-n=1} in Proposition \ref{congruence-bound}. In Section \ref{H-sharpness},
we make comments on the sharpness of Theorem \ref{H-main} and in the following section, we establish the lower bound
\eqref{exp-sum-below} in Theorem \ref{exp-sum-main}. In Section \ref{PS-cor}, we show how Theorem \ref{H-main}
implies Proposition \ref{cor-phong-stein}. In Section \ref{structure}, we give the main structural sublevel set statement
which lies at the heart of our analysis but we postpone the proof until Sections \ref{Prop-structure} and 
\ref{hensel-proof}. In Section \ref{upper}, we complete the proof of Proposition \ref{congruence-bound}
by establishing the upper bound. In Section \ref{1.5-first}, we give the proof of Theorem \ref{main-bound}
and in Sections \ref{H-first}, \ref{H-final}, \ref{establishing} and \ref{push}, we give the proof
of Theorem \ref{H-main}.

\subsection*{Notation}
We use the notation $A \lesssim B$ between two positive quantities $A$ and $B$ to denote $A \le CB$ for some
constant $C$. We sometimes use the notation $A \lesssim_k B$ to emphasise that the
implicit constant depends on the parameter $k$. We sometimes use $A = O(B)$ to denote the
inequality $A \lesssim B$.
Furthermore, we use $A \ll B$ to denote $A \le \delta B$ for a sufficiently small constant $\delta>0$
whose smallness will depend on the context.

\subsection*{Acknowlegement}
We thank Jonathan Hickman and Rob Fraser for clarifying discussions on topics related to this paper.

\section{A review of basic analysis over ${\mathbb Q}_p$}\label{notation}

A standard reference for $p$-adic analysis is \cite{K}.

The $p$-adic absolute value $|\cdot|$ extends from the integers ${\mathbb Z}$ to the rationals
${\mathbb Q}$ to the $p$-adic field ${\mathbb Q}_p$ in a unique way. General elements
$z \in {\mathbb Q}_p$ have a unique expansion
$$
z \ = \ \frac{a_{-M}}{p^M} + \cdots + \frac{a_{-1}}{p} \ + \ a_0  \ + \ a_1 p + a_2 p^2 \ + \cdots \ = \ L(z) \ + \
\sum_{j\ge 0} a_j p^j
$$
where $a_k \in {\mathbb Z}/p{\mathbb Z}$. The $p$-adic absolute value of $z$ is
then $|z| = p^M$ where $p^{-M}$ is the smallest power of $p$ in the expansion
with a nonzero coefficient. Hence $w\in {\mathbb Z}_p = \{z \in {\mathbb Q}_p : |z|\le 1\}$ precisely when $L(w) = 0$

The basic nonprincipal character ${\rm e}$ on ${\mathbb Q}_p$ is defined
by ${\rm e}(z) = e^{2\pi i L(z)}$ so that ${\rm e} \equiv 1$ on ${\mathbb Z}_p$ and ${\rm e}$ is nontrivial
$\{z \in {\mathbb Q}_p: |z| \le p\}$. Let $B_r(z_0) = \{z \in {\mathbb Q}_p: |z - z_0| \le r\}$ denote the ball
with centre $z_0$ and radius $r>0$ defined with respect to the $p$-adic absolute value.

For a polynomial $f \in {\mathbb Z}[X]$ and $m\in {\mathbb N}$, we have
$$
\int_{{\mathbb Z}_p} {\rm e}(p^{-m} f(z)) \, dz \ = \ \sum_{x=0}^{p^m -1} \int_{B_{p^{-m}}(x)} {\rm e}(p^{-m} f(z)) \, dz .
$$
Since $f(z) = f(x)$ mod $p^m$ when $z \in B_{p^{-m}}(x)$, 
we have ${\rm e}(p^{-m}f(z)) = {\rm e}(p^{-m}f(x))$ and also 
${\rm e}(p^{-m} f(x)) = e^{2\pi i f(x)/p^{m}}$ since $f(x) \in {\mathbb Z}$. Therefore
$$
\int_{{\mathbb Z}_p} {\rm e}(p^{-m} f(z)) \, dz \ = \ \sum_{x=0}^{p^m -1} {\rm e}(p^{-m}f(x)) |B_{p^{-m}}(x)| \ = \
p^{-m} \sum_{x=0}^{p^m -1} e^{2\pi i f(x)/p^m}
$$
which shows that complete exponential sums $S_m(f)$,
$$
\sum_{x=0}^{p^m-1} e^{2\pi i f(x)/p^m} \ = \ \int_{{\mathbb Z}_p} {\rm e}(p^{-m}Q(z)) \, dz,
$$
can be written as a $p$-adic oscillatory integral. In the same way, a normalised count $p^{-n\alpha} N_{a,\alpha}$
of the number of solutions to the congruence $Q(x_1, \ldots, x_n) \equiv a$ mod $p^{\alpha}$ where
$Q \in {\mathbb Q}_p[X_1, \ldots, X_n]$ can be written as the measure of the sublevel set
$$
p^{-n\alpha} N_{a,\alpha} \ = \ \bigl| \{{\underline{z}} \in {\mathbb Z}_p^n : |Q({\underline{z}}) - a| \le p^{-\alpha} \}\bigr|.
$$

We will consider polynomials $P \in {\mathbb Q}_p[X_1,\ldots, X_n]$ of many variables so that
$$
P({\underline{z}}) \ = \ \sum_{|\alpha| \le d} c_{\alpha} \,  {\underline{z}}^{\alpha}
$$
where $\alpha = (\alpha_1, \ldots, \alpha_n) \in {\mathbb N}_0^n$ is a multi-index, 
$|\alpha| = \alpha_1 + \cdots + \alpha_n$, each
$c_{\alpha} \in {\mathbb Q}_p$ and for ${\underline{z}} = (z_1,\ldots, z_n)$,
we have ${\underline{z}}^{\alpha} = z_1^{\alpha_1}\cdots z_n^{\alpha_n}$. Here ${\mathbb N}_0 = \{0,1,2,\ldots \}$.

We extend the $p$-adic absolute value $|\cdot|$ on ${\mathbb Q}_p$ to a norm on
vectors ${\underline{w}} = (w_1, \ldots, w_n) \in {\mathbb Q}^n_p$. We set
$|{\underline{w}}| = \max_j |w_j|$. We denote by 
$$
B_r({\underline{w}}) \ = \ \bigl\{{\underline{z}} \in {\mathbb Q}_p^n : \, |{\underline{z}} - {\underline{w}}| \le r \bigr\}
$$
the $p$-adic ball of radius $r$ and centre ${\underline{w}} \in {\mathbb Q}_p^n$.

The usual rules of calculus apply to polynomials over the $p$-adics. For example, we have
$$
\lim_{h\to 0} \frac{P(z_1,\cdots, z_j+h, \cdots, z_n) - P(z_1,\cdots, z_n)}{h} \ = \
\sum_{|\alpha|\le d} \alpha_j c_{\alpha} z_1^{\alpha_1}\cdots z_j^{\alpha_j -1} \cdots z_n^{\alpha},
$$
the limit exists in ${\mathbb Q}_p$. We denote this limit by $(\partial/\partial z_j) P({\underline{z}})$.
More generally we define the partial derivative
$$
\partial^{\alpha} P  \ := \ 
\frac{\partial^{|\alpha|} P}{\partial^{\alpha_1}z_1 \cdots \partial^{\alpha_n} z_n} 
$$
as a composition of the first order partial derivatives defined above. One can easily verify the
Taylor expansion for polynomials
$$
P({\underline{z}}) \ = \ \sum_{\alpha} \bigl[\partial^{\alpha} P({\underline{w}})/\alpha!\bigr] \, 
({\underline{z}} - {\underline{w}})^{\alpha} \ \ {\rm where} \ \ \alpha! = \alpha_1! \cdots \alpha_n!
$$
centred at any point ${\underline{w}}$.

Important to us will be the
differential operators $({\underline{u}}\cdot \nabla)^k$ defined as the $k$-fold
composition of the directional derivatives
$$
{\underline{u}}\cdot \nabla \ = \ u_1 \frac{\partial}{\partial z_1} + \cdots + u_n 
\frac{\partial}{\partial z_n}
$$
where ${\underline{u}} \in {\mathbb Q}_p^n$ defines the direction. One checks that
$({\underline{u}}\cdot \nabla)^k P({\underline{z}}) = g^{(k)}(0)$ where
$g(t) = P({\underline{z}} + t {\underline{u}}) \in {\mathbb Q}_p[X]$, with coefficients depending on
${\underline{z}}, {\underline{u}} \in {\mathbb Q}_p^n$.

\section{The space of homogeneous polynomials}\label{homogeneous}

Let $V_{n,k}$ denote the vector space of homogeneous polynomials in ${\mathbb Q}_p[X_1, \ldots, X_n]$
of degree $k$. So $Q \in V_{n,k}$ means 
$Q({\underline{z}}) = \sum_{|\alpha| = k} b_{\alpha} \, {\underline{z}}^{\alpha}$.

\begin{lemma}\label{V-basis}
Let $d(n,k)$ denote the dimenson of the vector space $V_{n,k}$. There exists a sequence
of unit vectors ${\underline{u}}_j, j=1,\ldots, d(n,k)$ in ${\mathbb Q}_p^n$ such that 
$$
Q_j({\underline{z}})\  := \  ({\underline{u}}_j \cdot {\underline{z}} )^k, \ \ j=1, \ldots, d(n,k)
$$
forms a basis for $V_{n,k}$.
\end{lemma}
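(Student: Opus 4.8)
The plan is to deduce this from the classical fact that the $k$-th powers of linear forms span the space of degree-$k$ forms, which over $\mathbb{Q}_p$ holds because $\mathbb{Q}_p$ is an infinite field of characteristic zero. Concretely, I would let $W \subseteq V_{n,k}$ be the $\mathbb{Q}_p$-linear span of the (infinite) family $\{(\underline{u}\cdot\underline{z})^k : \underline{u}\in\mathbb{Q}_p^n\}$, prove $W = V_{n,k}$, then extract from this spanning family a basis of size $d(n,k)$, and finally rescale the chosen directions to be unit vectors.

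For the spanning step I would argue by duality. Suppose $W \neq V_{n,k}$; then there is a nonzero linear functional $\ell$ on $V_{n,k}$ vanishing on $W$. Writing a general element of $V_{n,k}$ as $Q(\underline{z}) = \sum_{|\alpha|=k} b_\alpha \underline{z}^\alpha$ and $\ell(Q) = \sum_{|\alpha|=k} c_\alpha b_\alpha$ with coefficients $c_\alpha \in \mathbb{Q}_p$ not all zero, the multinomial theorem gives
$$
\ell\bigl((\underline{u}\cdot\underline{z})^k\bigr) \ = \ \sum_{|\alpha|=k} \frac{k!}{\alpha!}\, c_\alpha\, \underline{u}^\alpha \ =: \ F(\underline{u}).
$$
Since $\ell$ vanishes on $W$, the polynomial $F$ vanishes at every point of $\mathbb{Q}_p^n$; as $\mathbb{Q}_p$ is infinite, $F$ is the zero polynomial, so $(k!/\alpha!)\,c_\alpha = 0$ for every $\alpha$. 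Because $\mathbb{Q}_p$ has characteristic zero, each $k!/\alpha!$ is a nonzero integer, hence a unit's worth of nonzero, forcing $c_\alpha = 0$ for all $\alpha$ and contradicting $\ell \neq 0$. Therefore $W = V_{n,k}$.

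Now $V_{n,k}$ is finite-dimensional of dimension $d(n,k)$ and is spanned by the family $\{(\underline{u}\cdot\underline{z})^k\}$, so by ordinary linear algebra there exist directions $\underline{v}_1,\dots,\underline{v}_{d(n,k)} \in \mathbb{Q}_p^n$ for which $(\underline{v}_j\cdot\underline{z})^k$ form a basis. Each $\underline{v}_j$ is nonzero, say $|\underline{v}_j| = p^{M_j}$ with $M_j \in \mathbb{Z}$; I would set $\underline{u}_j := p^{M_j}\underline{v}_j$, so that $|\underline{u}_j| = 1$. Since $(\underline{u}_j\cdot\underline{z})^k = p^{kM_j}(\underline{v}_j\cdot\underline{z})^k$ differs from the $j$-th basis element only by the nonzero scalar $p^{kM_j}$, the $(\underline{u}_j\cdot\underline{z})^k$ still form a basis of $V_{n,k}$, with all $\underline{u}_j$ unit vectors, as required.

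The only genuine content is the spanning step, and the single point one must be careful about there is the use of infinitude and characteristic zero of $\mathbb{Q}_p$ (so that $F \equiv 0$ on $\mathbb{Q}_p^n$ implies $F = 0$ and so that the integers $k!/\alpha!$ do not vanish); the extraction of a basis and the $p$-adic rescaling are routine. One could instead make the spanning explicit by polarization, writing each monomial $\underline{z}^\alpha$ with $|\alpha| = k$ as an explicit $\mathbb{Q}_p$-combination of $k$-th powers of linear forms via iterated finite differences, but the duality argument above is shorter and avoids bookkeeping.
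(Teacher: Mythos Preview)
Your proof is correct and follows essentially the same duality argument as the paper: assume the span is proper, take a nonzero functional vanishing on it, and obtain a contradiction via the multinomial expansion and the infinitude of $\mathbb{Q}_p$. You are in fact slightly more explicit than the paper in two places---the use of characteristic zero to conclude $k!/\alpha! \neq 0$, and the final rescaling to make the $\underline{u}_j$ unit vectors---both of which the paper leaves implicit.
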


\begin{proof} It suffices to show that ${\rm Span}\bigl\{ ({\underline{u}} \cdot {\underline{z}})^k : {\underline{u}}
\in {\mathbb Q}_p^n \bigr\} = V_{n,k}$. Suppose not. Then there exists a nonzero linear
functional $\Lambda : V_{n,k} \to {\mathbb Q}_p$ which vanishes on $M = {\rm Span}\bigl\{ ({\underline{u}} \cdot {\underline{z}})^k : {\underline{u}}
\in {\mathbb Q}_p^n \bigr\}$. For every $\alpha \in {\mathbb N}_0^n$ with $|\alpha| = k$, set
$c_{\alpha} = \Lambda({\underline{z}}^{\alpha})$ and since $\Lambda $ is nonzero and
$\{{\underline{z}}^{\alpha}\}$ forms a basis for $V_{n,k}$, the coefficients $\{c_{\alpha}\}$ are
not all equal to zero. 

For any ${\underline{u}} \in {\mathbb Q}_p^n$, 
$$
({\underline{u}}\cdot {\underline{z}})^k \ = \ \sum_{|\alpha| =k} b_{k,\alpha}\,  (u_1 z_1)^{\alpha_1} \cdots
(u_n z_n)^{\alpha_n}
$$
where $b_{k,\alpha} = k!/(\alpha_1 ! \cdots \alpha_n !)$. Hence
$$
0 \ = \ \Lambda(({\underline{u}}\cdot {\underline{z}})^k) \ = \ \sum_{|\alpha|=k} b_{k,\alpha} c_{\alpha} \,
{\underline{u}}^{\alpha}
$$
where the polynomial 
$$
P({\underline{w}}) \ := \ \sum_{|\alpha|=k} b_{k,\alpha} c_{\alpha} {\underline{w}}^{\alpha} \ \in \ {\mathbb Q}_p[X_1, \ldots, X_n]
$$
is a nonconstant polynomial. A standard fact from algebra states that for any nonconstant polyonomial
$P \in k[X_1,\ldots, X_n]$
over an infinite field $k$, there exists a ${\underline{u}} \in k^n$ such that $P({\underline{u}}) \not= 0$. 

This is a contradiction.  
\end{proof}

As a consequence of Lemma \ref{V-basis}, we see that for every $\alpha \in {\mathbb N}_0^n$
with $|\alpha| = k$, 
$$
{\underline{z}}^{\alpha} \ = \ \sum_{j=1}^{d(n,k)} c_j ({\underline{u}}_j \cdot {\underline{z}})^k
$$
for some choice of coefficients $c_j = c_j(\alpha) \in {\mathbb Q}_p$. Hence we can write
\begin{equation}\label{partial-u}
\partial^{\alpha} \ = \ \sum_{j=1}^{d(n,k)} c_j \, ({\underline{u}}_j \cdot \nabla)^k
\end{equation}
for every $\alpha$ with $|\alpha| = k$.

We note that the coefficients $\{c_j(\alpha)\}$
appearing in \eqref{partial-u} are universal, depending only on $d$ and $n$ and not depending on
the polynomial $P$. If $\{q^t\}$ denote all the prime powers arising as divisors of the coefficients
$c_j(\alpha)$, then $q^t \le C_{d,n}$.

\section{The lower bound \eqref{congruence-n=1} in Proposition \ref{congruence-bound}}\label{lower}

Let $Q \in {\mathbb Z}[X_1, \ldots, X_n]$ and let
$N_{a,\alpha}$ denote the number of solutions to $Q \equiv a$ mod $p^{\alpha}$; see \eqref{poly-congruence}. We note that
the normalised count
$$
p^{-n\alpha} N_{a,\alpha} \ = \ 
\bigl|\{ {\underline{z}} \in {\mathbb Z}_p^n : \, |Q({\underline{z}}) - a| \le p^{-\alpha} \}\bigr|
$$
is given as the $p$-adic measure of a sublevel set for $Q - a$. Hence
$$
\sup_{a \in {\mathbb Z}} p^{-n\alpha} N_{a, \alpha} \ = \ \sup_{a\in {\mathbb Z}_p} \,
\bigl|\{ {\underline{z}} \in {\mathbb Z}_p^n : \, |Q({\underline{z}}) - a| \le p^{-\alpha} \}\bigr|.
$$

Recall that $P_{\alpha}({\underline{z}}) =
p^{-\alpha} Q({\underline{z}})$ and ${\mathcal H}_{\alpha} = H_{P_{\alpha}}$. Consider any
${\underline{z}}_{*} \in {\mathbb Z}_p^n$ where ${\mathcal H}_{\alpha} = H_{P_{\alpha}}({\underline{z}}_{*})$
and let $a_{*} = Q({\underline{z}}_{*}) \in {\mathbb Z}_p$. We have
$$
S \ := \ \sup_{a\in {\mathbb Z}}
p^{-n\alpha} N_{a,\alpha} \ \ge \  
\bigl|\{ {\underline{z}} \in {\mathbb Z}_p^n : \, |P_{\alpha}({\underline{z}}) - P_{\alpha}({\underline{z}}_{*})| \le 1 \}\bigr|
$$
and so
$$
S \ \ge \ 
\bigl| \{ {\underline{z}}\in B_{{\mathcal H}_{\alpha}^{-1}}({\underline{z}}_{*}) \cap {\mathbb Z}_p^n :  
|\sum_{|\beta| \ge 1} [\partial^{\beta} P({\underline{z}}_{*})/\beta!] 
({\underline{z}} - {\underline{z}}_{*})^{\beta} | \le 1 \} \bigr| \
= \ |B_{{\mathcal H}_{\alpha}^{-1}}({\underline{z}}_{*}) \cap {\mathbb Z}_p^n|
$$
since $|\partial^{\beta}P({\underline{z}}_{*})/\beta!| \le {\mathcal H}_{\alpha}^{|\beta|}$ 
for each $|\beta|\ge 1$. 

When $n=1$, we have 
$$
|B_{{\mathcal H}_{\alpha}^{-1}}({\underline{z}}_{*}) \cap {\mathbb Z}_p| \ > \ p^{-1} \min(1, {\mathcal H}_{\alpha}^{-1})
$$
with an improvement to 
$|B_{{\mathcal H}_{\alpha}^{-1}}({\underline{z}}_{*}) \cap {\mathbb Z}_p| = \min(1, {\mathcal H}_{\alpha}^{-1})$
when the $p$-adic valuation of ${\mathcal H}_{\alpha}$ is an integer. This establishes the lower bound 
\eqref{congruence-n=1} in
Proposition \ref{congruence-bound}.

\section{Comments on the sharpness of the bound in Theorem \ref{H-main}}\label{H-sharpness}

First we observe by example that $\epsilon = 1$ can occur in \eqref{I-single-refined}.
 
Consider the example $P(t) = a t^3 + b t \in {\mathbb Q}_p[X]$ where $|a| = p^{3r - 2}$ and $|b| = p^r$.
We write $b = b_0 p^{-r}$ and $a = a_0 p^{-3r+2}$ where $|b_0| = |a_0| = 1$. We choose any pair $(a_0, b_0) \in [{\mathbb Z}/p{\mathbb Z}]^2$
such that the quadratic congruence $3 a_0 s^2 + b_0 \equiv 0$ mod $p$ is not solvable.

{\bf Claim}: Under the above condition, we have $H = p^r$ where
$$
H \ = \ \inf_{t\in {\mathbb Z}_p} \max \bigl( |b + 3 a t^2|, |3 a t|^{1/2}, p^{r - 2/3} \bigr) .
$$

The size conditions on $a$ and $b$ imply $H \le p^r$. 
To see the reverse inequality, we may assume there is some cancellation in the sum $b + 3 a t^2$; that is,
$|3a t^2| = p^r$. Otherwise $|b + 3 a t^2| \ge p^r$ and we would be done. Hence we may assume that $|t| = p^{-r+1}$.
If we write $t = p^{r-1} s$ where $|s| =1$, we have
$$
| b + 3 a t^2| \ = \ p^r |b_0 + 3 a_0 s^2| \ = \ p^r
$$
since $|b_0 + 3 a_0 s^2| \le p^{-1}$ means the congruence $3 a_0 s^2 + b_0 \equiv 0$ mod $p$ is solvable.
Hence $H = p^r$ and $H = H(p^{r-1} s)$ for any $|s| = 1$. Furthermore, 
$$
J(p^{r-1}s) \ = \  
\max(|3 a p^{r-1} s|^{1/2}, |a|^{1/3}) \ = \ p^{r-1/2}  \ {\rm and}  \ 
|P'''(z)/3!| \equiv |a| = p^{3r -2}.
$$
Hence \eqref{P-derivatives} holds.

Next we have
$$
I_P \ = \ \int_{{\mathbb Z}_p} {\rm e}(P(z)) \, dz \ = \ p^{-M} \, \sum_{x=0}^{p^M - 1} e^{2\pi i [ a_0 x^3 + p^{2r-2} b_0 x]/p^M}
$$
where $M = 3r - 2$.  A calculation shows that
$$
I_P \ = \ p^{-r} \, \sum_{x=0}^{p-1} e^{2\pi i [ a_0 x^3 + b_0 x]/p} \ = \ H^{-1} \, S(a_0,b_0)
$$
where $S(a_0,b_0)$ is an exponential sum over the finite field ${\mathbb Z}/p{\mathbb Z}$. 
Hence in this case, we have $\epsilon =1$.

\subsection*{Comparing to the euclidean case}
The corresponding estimate for the euclidean oscillatory integral
$$
I \ = \ \int_0^1 e^{2\pi i P(x)} \, dx
$$
where $P \in {\mathbb R}[X]$ is
$$
|I| \ \le \ C_{d} \, H^{-1} \ \ {\rm where} \ \ H = \inf_{x \in [0,1]} H(x) \ = \ \inf_{x\in [0,1]} 
\max_{k\ge 1} \bigl(|P^{(k)}(x)/k!|^{1/k} \bigr).
$$
A proof
can be found in \cite{ACK}, including the corresponding higher dimensional analogue. 
In \cite{ACK}, sharpness of the above bound was observed in the following sense: for any polynomial $P \in {\mathbb R}[X]$, there
is a $0< c \le 1$ such that
\begin{equation}\label{real-sharp}
\min(1, H^{-1}) \ = \ \Bigr| \int_0^c e^{2\pi i P(x)} \, dx \Bigr|.
\end{equation}
This is not too hard to prove but it relies heavily on the ordered structure of the real field ${\mathbb R}$.
One can use
\eqref{real-sharp} to compare the bound $|I|\le C_d H^{-1}$ to other bounds which are robust
under truncations of the oscillatory integral.

For instance, a very sharp bound for $I$ mentioned in the Introduction is due to Phong and Stein \cite{PS-I}:
$$
\Bigl| \int_a^b e^{2\pi i P(x)} \, dx \Bigr| \ \le \ C_d \, 
\max_{\xi} \min_{\xi \in {\mathcal C}} 
\Bigl[\frac{1}{|a \prod_{\eta \notin {\mathcal C}} (\xi - \eta)^{e_{\eta}}|}\Bigr]^{\frac{1}{S({\mathcal C})+1}} 
$$
so that \eqref{phong-stein} is the exact discrete analogue but here $|\cdot|$ denotes the usual
archimedean absolute value measuring the distance between the complex roots of $P'$ instead of the $p$-adic absolute value used in Corollary \ref{cor-phong-stein}.
The estimate above holds for any $a < b$ and hence \eqref{real-sharp} implies
\begin{equation}\label{PS-H}
\min_{\xi} \max_{\xi \in {\mathcal C}}
|a \prod_{\eta \notin {\mathcal C}} (\xi - \eta)^{e_{\eta}}|^{1/(S({\mathcal C})+1)} \ \le \ C_d \, H. 
\end{equation}

In the next section, we propose an alternative version of \eqref{real-sharp} which is valid in both archimedean and
nonarchimedean settings.

\section{Proof of the lower bound \eqref{exp-sum-below} in Theorem \ref{exp-sum-main}}\label{exp-lower}

Here we establish a general lower bound for 
$$
I_P \ = \ \int_{{\mathbb Z}_p} {\rm e} (P(z)) \, d z
$$ 
where $P \in {\mathbb Q}_p[X]$. The bound \eqref{I-single} from
Theorem \ref{H-main} implies
$$
|I_P| \ \le \ C_{d} \, \min(1, J_P^{-1}) \ \ {\rm where} \ \ J_P = \inf_{z\in{\mathbb Z}_p} 
\max_{2\le k \le d} \
(|P^{(k)}(z)/k!|^{1/k}).
$$
To analyse sharpness of this bound, set 
$$
\alpha_P \ := \ \sup_{w, b} \ \bigl[\max\bigl(1, J_{wP}\bigr) |I_{wP_b}|\bigr] \ \ {\rm where} \ \ P_b(z) = P(z) - bz.
$$
Here the supremum is taken over $w \in {\mathbb Z}_p$ and $b \in {\mathbb Q}_p$.
Since the $J$ functional does not see linear terms, we see $J_{f_b} = J_f$. Hence the
above bound implies $\alpha_P \le C_{d}$.
We seek a lower bound for $\alpha_P$.

Consider
$$
S_P \ := \ \sup_{a,b \in {\mathbb Q}_p} 
\bigl| \{z\in {\mathbb Z}_p : |P(z) - a - b z| \le 1 \} \bigr|.
$$
Let $z_{*} \in {\mathbb Z}_p$ be such that 
$J_P = J_P(z_{*})$. Then for $a = P(z_{*}) + b P'(z_{*})$ 
and $b = P'(z_{*})$,
we have 
$$
S_P \ \ge \ \bigl| \{ z\in B_{J_P^{-1}}(z_{*})\cap{\mathbb Z}_p : |\sum_{k\ge 2} [P^{(k)}(z_{*})/k!] 
(z - z_{*})^{k} | \le 1 \} \bigr| \ = \ |B_{J_P^{-1}}(z_{*})\cap {\mathbb Z}_p|
$$
since $|P^{(k)}(z_{*})/k!| \le J_P^{k}$ 
for each $k\ge 2$. Hence we see that $S_P > p^{-1} \min(1, J_P^{-1})$ with an improvement to
$S_P \ge \min(1, J_P^{-1})$ if $J_P = p^m$ for some integer $m\in {\mathbb Z}$.

On the other hand, if ${\mathcal D}_P^{a,b} = \{z\in {\mathbb Z}_p: |P(z) - a - b z| \le 1 \}$, then
$$
{\mathbbm{1}}_{{\mathcal D}_P^{a,b}}(z) \ = \ {\mathbbm{1}}_{{\mathbb Z}_p}(P(z) - a - b z) \ = \ 
\int_{{\mathbb Z}_p} {\rm e} (w(P(z) - a - bz)) \, dw
$$
and hence
$$
|{\mathcal D}_P^{a,b}| \ = \ 
\int_{{\mathbb Z}_p} \Bigl[ \int_{{\mathbb Z}_p} {\rm e} (w(P(z) - a - b z)) \, dw \Bigr] \,
dz \ = \ \int_{{\mathbb Z}_p} {\rm e} (-aw) I_{wP_{b}} \, dw .
$$
Since $J_{wP_{b}} = J_{w P}$, we have
$$
p^{-1} \, \min(1, J_P^{-1}) \ < \ S_P \ \le \ \alpha_P \, \int_{{\mathbb Z}_p} \min(1, J_{w P}^{-1}) \, dw.
$$
But since $|w|^{1/2} J_P \le J_{wP}$ for $w \in {\mathbb Z}_p$, we have 
$$
p^{-1} \, \min(1, J_P^{-1}) \ < \ \alpha_P \, \min(1, J_P^{-1}) \int_{{\mathbb Z}_p} |w|^{-1/2} \, dw \ \le \ 
4 \, \alpha_P \, \min(1, J_P^{-1}),
$$
implying $(0.25) \, p^{-1} < \alpha_P$ with an improvement $1/4 \le \alpha_P$ if $J_p = p^m$ for some 
$m\in {\mathbb Z}$. 

We conclude that for every $P \in {\mathbb Q}_p[X]$, there is a $w\in {\mathbb Z}_p$ and a $b \in {\mathbb Q}_p$ such that
$$
(1/4) \, p^{-1} \, \min(1, J_{wP}^{-1}) \ < \ \Bigl| \int_{{\mathbb Z}_p} {\rm e} (wP_b(z)) \, dz \Bigr|.
$$
This is our version of \eqref{real-sharp}.

For $f\in {\mathbb Z}[X]$, we apply the above to $P(z) = p^{-m} f(z)$, tracking the value of $b$, to conclude
that there are integers $a, c$ such that
$$
(1/4) \, p^{-1} \, \min(1, p^{-m} J_{af}^{-1}) \ < \ \bigl| p^{-m} \sum_{x=0}^{p^m -1} e^{2\pi i [a f_c (x)]/p^m} \bigr|,
$$
estabishing the lower bound \eqref{exp-sum-below} in Theorem \ref{exp-sum-main}.

\section{Proof of Proposition \ref{cor-phong-stein}}\label{PS-cor}

Here we show how Theorem \ref{H-main} implies Proposition \ref{cor-phong-stein}. 

Recall that $P'(z) = a \prod_{\xi} (z - \xi)^{e_{\xi}}$. The proof of Corollary \ref{cor-phong-stein}
comes in two steps. First we suppose $\epsilon = 0$ in Theorem \ref{H-main}. Then
$|I_P| \le C_d H_P^{-1}$ and so \eqref{phong-stein} follows from the following bound: for any
$z \in {\mathbb Z}_p$, there is a root $\xi$ of $P'$ such that 
\begin{equation}\label{first-step-reduction}
\max_{{\mathcal C} \ni \xi} |a \prod_{\eta\notin {\mathcal C}} (\xi - \eta)^{e_{\eta}} |^{1/(S({\mathcal C})+1)}
 \ \le \ H_P(z) 
\end{equation}
holds. Here we do not need any special properties for $P$. The inequality \eqref{first-step-reduction}
is a general inequality which holds for any polynomial $P \in {\mathbb Q}_p[X]$.

For the second step, we suppose $\epsilon =1$ in Theorem \ref{H-main}. In this case
we have $|I_P| \le C_d \sqrt{p} \, p^{-s}$ when $p>d$ where $p^s = H = H(z_{*}) = |P'(z_{*})|$ for some
$z_{*} \in {\mathbb Z}_p$. Furthermore $p^{s-1/2} = J(z_{*}) = |P''(z_{*})/2|^{1/2}$
and \eqref{P-derivatives} holds.

\subsection*{The first step} But first we prove \eqref{first-step-reduction}.

We fix $z$
and prove \eqref{first-step-reduction} for a root
$\xi$ of $P'$ with the property
$|z - \xi| \ = \ \min_{\eta} |z - \eta|$. 
We divvy up the roots $\{\xi_j^k\}_{1\le j \le r_k, 1\le k \le t}$ of $P'$  so that
$$
|z - \xi_1^k| \ = \ |z - \xi_2^k| \ = \ \cdots \ = |z - \xi_{r_k}^k| \ =: \ A_k
$$
with $0 \le A_1 < A_2 < \ldots < A_t$. Here we have set $\xi_1^1 = \xi$. We will take special
care with the case $A_1 = 0$ where $z = \xi$ and $P'(z) = 0$.

We fix a general subset ${\mathcal C}$ of the roots of $P'$ and split it as
$$
{\mathcal C} \ = \ C_1 \cup C_2 \cup \cdots \cup C_t \ \ {\rm where} \ \ 
C_k \ = \ \{\xi_1^k, \ldots, \xi_{r_k}^k\} \cap {\mathcal C}.
$$
We set $f_k = \sum_{\eta \in C_k} e_{\eta}$ so that $S({\mathcal C}) = f_1 + \cdots + f_t$. 
When $A_1 = 0$, we have $C_1 = \{\xi\}$ and so $f_1 = e_{\xi}$.

For each $1\le k \le t$, set
$$
F_k(z) \ = \ \prod_{j=1}^{r_{k}} (z - \xi_j^k)^{e_{\xi_j^k}} \ =: \ F_k^1(z) \, F_k^2(z)
$$
where
$$
F_k^1(z) \ = \ \prod_{\xi_j^k \in C_k} (z - \xi_j^k)^{e_{\xi_j^k}} \ \ {\rm and} \ \
F_k^2(z) \ = \ \prod_{\xi_j^k \notin C_k} (z - \xi_j^k)^{e_{\xi_j^k}} .
$$
Therefore $P'(z) = a \prod_{k=1}^t F_k(z)$ and if 
$Q_{{\mathcal C}} := | a \prod_{k=1}^t F_k^2(\xi)|$, then our goal is to prove
\begin{equation}\label{Q-aim}
Q_{{\mathcal C}}^{1/(S({\mathcal C}) +1)} \ \le \ H_P(z)
\end{equation}
which will establish \eqref{first-step-reduction} since ${\mathcal C}$ is a general subcollection
of the roots of $P'$.

By the formula above for $P'$, we have
\begin{equation}\label{H-1}
H_P(z) \ \ge \
|P'(z)| \ = \ |a \prod_{k=1}^t F_k^1(z) F_k^2(z)| \ \ge \ A_1^{S({\mathcal C})} Q_{\mathcal C}
\end{equation}
since for any root $\eta$ of $P'$, $|\xi - \eta| \le \max(|z - \xi|, |z-\eta|) = |z-\eta|$.

To derive other lower bounds for $H(z)$ in terms of $Q_{\mathcal C}$,
we consider the derivatives $P^{(1+\rho_k)}$ of $P'$ where $\rho_k := \sigma_1 + \cdots + \sigma_k$
where $\sigma_k = \sum_{j=1}^{r_k} e_{\xi_j^k}$.
To do this, set 
$$
{\mathcal F}_k(z) \ = \ \prod_{\ell = k+1}^t F_{\ell}(z) 
$$ 
for each $1\le k \le t-1$ and note that
$$
P^{(1+ \rho_k)}(z)/\rho_k! \ = \ a {\mathcal F}_k(z) \ + \ a {\mathcal H}_k(z)
$$
where both ${\mathcal F}_k$ and ${\mathcal H}_k(z)$ are homogeneous functions of degree $d-1 - \rho_k$
($d = {\rm deg}(P)$) in the variables $z-\eta$ as $\eta$ runs over the distinct roots of $P'$.
When $A_1 = 0$, we have $\rho_1 = \sigma_1 = e_{\xi}$ and so 
$P^{(1+e_{\xi})}(z)/e_{\xi}! = a {\mathcal F}_1(z)$ and ${\mathcal H}_1(z) = 0$.

When $A_1>0$,
each term in ${\mathcal H}_k$ has a factor $z-\xi_j^{\ell}$ for some $1\le \ell \le k$ and so 
$|a {\mathcal H}_k(z)| < |a {\mathcal F}_k(z)|$ since $|z-\xi_j^{\ell}| < |z-\eta_{j'}^{\ell'}|$ for any $k+1 \le \ell'$.
Therefore $|P^{(1 + \rho_k)}(z)/\rho_k!| = |a {\mathcal F}_k(z)| = |a {\mathcal F}_k(\xi)|$ since
$|\xi - \xi_j^k| = |\xi - z + z - \xi_j^k| = |z - \xi_j^k|$ for any $k\ge 2$.

Hence 
$$
H_P(z)^{1+\rho_k} \ \ge \ |P^{(1+\rho_k)}(z)/\rho_k! | \ = \ |a {\mathcal F}_k(\xi)| 
$$
and if $f^k := \sigma_k - f_k$, then for all $1\le k \le t-1$,
$$
Q_{\mathcal C} \ \le \ A_k^{f^1 + \cdots f^k} \frac{1}{|\prod_{\ell=k+1}^t F_{\ell}^1(\xi)|} 
|a {\mathcal F}_k(\xi)| \ \le \ A_k^{f^1 +\cdots + f^k} \frac{1}{A_{k+1}^{f_{k+1}+\cdots + f_t}}
H_P(z)^{1+\rho_k}.
$$
The last inequality follows from the previous displayed equation and the fact that $|z - \xi_j^k| = |\xi - \xi_j^k|$
for every $k\ge 2$ as observed before. If $A_1 = 0$, then $f^1 = 0$ and we interpret $A_1^{f^1} = 1$.

Therefore since $A_k < A_{k+1} < A_{k+2}$, we apply the above inequality for $Q_{\mathcal C}$
for $k$ and $k+1$ to conclude that
\begin{equation}\label{Q-k}
Q_{\mathcal C} \ \le \ A_{k+1}^{\rho_k - S({\mathcal C})} H_P(z)^{1+\rho_k} \ \ {\rm and} \ \ 
Q_{\mathcal C} \ \le \ A_{k+1}^{\rho_{k+1} - S({\mathcal C})} H_P(z)^{1+\rho_{k+1}}.
\end{equation}
The first inequality with $k=0$ incorporates \eqref{H-1} if we interpret $\rho_0 = 0$.

We now divide the analysis into cases depending on the size $S({\mathcal C})$ of ${\mathcal C}$.
Suppose $\rho_k < S({\mathcal C}) \le \rho_{k+1}$ for some
$0\le k \le t-1$. Again with the interpretation that $\rho_0 = 0$, we see that any cluster
of roots ${\mathcal C}$ must have a size lying in one of these intervals. With $\rho_k < S({\mathcal C}) \le 
\rho_{k+1}$, we see that the first inequality in \eqref{Q-k} implies
$$
A_{k+1}^{S({\mathcal C})- \rho_k} \, Q_{\mathcal C} \ \le \ H_P(z)^{1+\rho_k} 
$$
and this implies \eqref{Q-aim} when $Q_{\mathcal C}^{-1/(S({\mathcal C}) +1)} \le A_{k+1}$
and therefore we may assume
\begin{equation}\label{assumption-1}
A_{k+1} \ < \ Q_{\mathcal C}^{-1/(S({\mathcal C}) + 1)}.
\end{equation}
When $A_1 = 0$, the reduction to \eqref{assumption-1} when $k=0$ is automatic.

But the second inequality in \eqref{Q-k}, together with \eqref{assumption-1}, implies
$$
Q_{\mathcal C} \ \le \  A_{k+1}^{\rho_{k+1} - S({\mathcal C})} H_P(z)^{1+\rho_{k+1}} \ \le \ 
Q_{\mathcal C}^{-(\rho_{k+1} - S({\mathcal C}))/(S({\mathcal C}) + 1)} \, H_P(z)^{1+\rho_{k+1}}
$$
and this unravels to \eqref{Q-aim}, completing the proof of \eqref{first-step-reduction}.

\subsection*{The second step}

Here we examine the case when $\epsilon = 1$. In this case \eqref{I-single-refined} implies
$|I_P| \le C_d \, p^{-s+1/2}$ when $p>d\ge 2$ where $p^s = H = H_P(z_{*}) = |P'(z_{*})|$ and 
$p^{s-1/2} = J_P(z_{*}) = |P''(z_{*})/2|^{1/2}$
for some $z_{*} \in {\mathbb Z}_p$. 
Furthermore \eqref{P-derivatives} states 
$$
|P^{(k)}(z_{*})/k!| \ \le \ p^{ks - (k-1)} \ \ {\rm for \ every} \ \ 1 \le k \le d. 
$$

Let $\xi$ be a root of $P'$ with the property
$|z_{*} - \xi| \ = \ \min_{\eta} |z_{*} - \eta|$. Our aim is to show
\begin{equation}\label{aim-s}
\max_{\xi \in {\mathcal C}} |a \prod_{\eta\notin {\mathcal C}} (\xi - \eta)^{e_{\eta}} |^{1/(S({\mathcal C})+1)}
 \ \le \ p^{s-1/2} 
\end{equation}
when $p\ge 3$. 
This
will show \eqref{phong-stein} in the case $\epsilon = 1$ since $p>d\ge 2$. We follow the argument (and notation)
in the first step but there is one important, additional ingredient to the argument.

We observe that $A_1 = |z_{*} - \xi| = p^{-s+1}$ when $p\ge 3$. To see this, consider the polynomial
$$
g(z) \ := \ P'(z+ z_{*}) \ = \ P'(z_{*}) + P''(z_{*}) z + [P'''(z_{*})/2] \, z^2 + \cdots 
$$
whose roots are precisely $\{\eta - z_{*}\}$ where $\{\eta\}$ are the roots of $P'$.
The size $|z_{*} - \eta|$ of the roots of $g$ are ordered by $A_1 < A_2 < \cdots < A_t$
where the $A_k$ are defined in first step above with $z$ replaced by $z_{*}$. The 
constant term of $g$ has valuation ${\rm ord}_p(P'(z_{*})) = -s$ and
the linear term has valuation ${\rm ord}_p(P''(z_{*})) = -2s + 1$ since $p\not= 2$.
In general,
${\rm ord}_p(P^{(k)}(z_{*})/k!) \ge -ks + (k-1)$ by \eqref{P-derivatives} and this implies
$$
{\rm ord}_p(P^{(k)}(z_{*})) \ \ge \  {\rm ord}_p(P^{(k)}(z_{*})/k!) \ \ge \ -ks + (k-1).
$$

We consider the Newton polygon (see \cite{K}) of $g$
 in the plane generated by the points 
$$
(i, {\rm ord}_p(b_{i+1}), \ \  i=0, \ldots, d-1 \ \  {\rm where} \ \  g(z) = b_1 + b_2 z + \cdots
+ b_d z^{d-1}.
$$
From the conditions detailed above, ${\rm ord}_p(b_1) = -s, \, {\rm ord}_p(b_2) = -2s+1$ (since $p\not= 2$)
and in general ${\rm ord}_p(b_k) \ge -ks + (k-1)$. Hence we see that the Newton polygon lies above the
line $y = (-s+1)x - s$ {\bf and} the first part of the polygon lies along this line.


A basic result from algebraic number theory (see for example, \cite{K}) relates the slopes
of the polygon for $g$ with the valuations of the roots of $g$. In particular the largest 
(in absolute value) negative slope $-s+1$ gives the valuation 
${\rm ord}_p(z_{*}-\xi) = s-1$ of $z_{*}-\xi$;
that is, $A_1 = |z_{*} - \xi| = p^{-s+1}$ as claimed. 

We fix a subcollection ${\mathcal C}$ of the roots of $P'$ and 
follow the argument from the first step with $z$ replaced by $z_{*}$.
In particular \eqref{aim-s} can be expressed as 
\begin{equation}\label{aim-s-again}
Q_{\mathcal C}^{1/(S({\mathcal C}) +1)} \ \le \ p^{s-1/2}.
\end{equation}
The bound \eqref{H-1} becomes
$$
p^s \ = \ |P'(z_{*})| \ \ge \  |a \prod_{k=1}^t F_k^1(z) F_k^2(z)| \ \ge \ A_1^{S({\mathcal C})} Q_{\mathcal C}
$$
and since $p^{-s+1} = A_1$ when $p\ge 3$,
$$ 
(p^{-s+1})^{S({\mathcal C})} Q_{\mathcal C} \ = \
A_1^{S({\mathcal C})} Q_{\mathcal C} \ \le \ p^s
$$
and this implies \eqref{aim-s-again} since $S({\mathcal C}) \ge 1$.
This completes the second step and the proof of Corollary \ref{cor-phong-stein}.

\section{A structural sublevel set statement}\label{structure}

Here we detail a key sublevel set bound central to the proofs of Theorem \ref{H-main}, 
Theorem \ref{main-bound} and Proposition \ref{congruence-bound}.

\begin{proposition}\label{sublevel-structure-basic}
Let $Q \in {\mathbb Q}_p[X]$ 
and $z \in {\mathbb Z}_p$. Suppose $|Q^{(k)}(z)/k!| \ge 1$ and
$|Q(z)| \le p^{-L}$ for some $L\ge 1$. Then there exists a zero $z_{*}\in {\mathbb Z}_p$
of $Q^{(j)}$ for some $0\le j \le k$ such that $|z - z_{*}| \le p^{-L/k}$.
\end{proposition}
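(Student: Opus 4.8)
The plan is to induct on $k$. The base case $k=1$ is essentially Hensel's lemma in quantitative form: if $|Q'(z)| \ge 1$ and $|Q(z)| \le p^{-L}$ with $L \ge 1$, then the Newton iteration $z \mapsto z - Q(z)/Q'(z)$ converges to a genuine zero $z_*$ of $Q$ in $\mathbb{Z}_p$, and the standard estimates give $|z - z_*| \le |Q(z)|/|Q'(z)| \le p^{-L} \le p^{-L/1}$, which is exactly the claim with $j = 0$. So the interesting content is the inductive step.

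For the inductive step, suppose the statement holds for $k-1$ and we are given $|Q^{(k)}(z)/k!| \ge 1$ and $|Q(z)| \le p^{-L}$. The natural dichotomy is on whether the lower-order derivatives are also large at $z$. First I would treat the easy case: if $|Q'(z)| \ge 1$, then again Newton's method produces a zero $z_*$ of $Q$ with $|z - z_*| \le |Q(z)| \le p^{-L} \le p^{-L/k}$, and we are done with $j=0$. So assume $|Q'(z)| < 1$, i.e. $|Q'(z)| \le p^{-1}$. Now consider the Taylor expansion of $Q$ about $z$: we want to locate a point $z'$ near $z$ where $|Q'(z')|$ becomes $\ge 1$ (or where some higher derivative situation improves) so that we can re-run the argument. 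The key mechanism is that $Q'$ has a large $(k-1)$-st derivative at $z$, namely $|(Q')^{(k-1)}(z)/(k-1)!| = |Q^{(k)}(z)/(k-1)!| \ge |Q^{(k)}(z)/k!| \ge 1$ (here $p > k$, so $|k!|=1$; I should check whether the proposition assumes $p$ large — if not, one absorbs the $|k!|$ factor, which only shifts $L$ by an absolute amount).

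The heart of the matter, and the step I expect to be the main obstacle, is the following geometric fact about the Newton polygon of $Q$ expanded about $z$: writing $Q(z+t) = \sum_{i\ge 0} c_i t^i$ with $c_0 = Q(z)$, $c_1 = Q'(z)$, and ${\rm ord}_p(c_k) = 0$, the conditions ${\rm ord}_p(c_0) \ge L$, ${\rm ord}_p(c_1) \ge 1$, ${\rm ord}_p(c_k) = 0$ force the Newton polygon to have a segment from $(0, {\rm ord}_p c_0)$ to some vertex, and the slopes — which record the valuations of the roots of the translate of $Q$ — must include one of absolute value at least $L/k$ but the relevant root is a root of $Q$ itself, OR the polygon forces a low-order derivative $Q^{(j)}$ ($1 \le j \le k$) to have a zero within distance $p^{-L/k}$. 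Concretely: the segment of the Newton polygon over $[0,k]$ drops from height $\ge L$ at $x=0$ to height $0$ at $x=k$, so it has average slope $\le -L/k$; the steepest (leftmost) segment thus has slope $\le -L/k$, meaning $Q(z+t)$ has a root $t_0$ with ${\rm ord}_p(t_0) \ge L/k$, i.e. $z + t_0 \in \mathbb{Z}_p$ (since $L/k \ge $ something positive once we know it's $\ge 1/k$... one must be slightly careful that $L/k$ could be less than $1$, but then $p^{-L/k}$ exceeds $p^{-1}$ and the ball $B_{p^{-L/k}}(z)$ is large — here I'd fall back on the induction hypothesis applied to $Q^{(1)}$, which has $|(Q^{(1)})^{(k-1)}(z)/(k-1)!| \ge 1$ and $|Q^{(1)}(z)| = |Q'(z)| \le p^{-1}$, yielding a zero $z_*$ of some $Q^{(j)}$, $1 \le j \le k$, with $|z - z_*| \le p^{-1/(k-1)}$, and then one reconciles the two exponents $1/(k-1)$ versus $L/k$).

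So the clean structure is: (i) if $|Q'(z)| \ge 1$, use Newton to get a zero of $Q$ at distance $\le p^{-L}$; (ii) otherwise $|Q'(z)| \le p^{-1}$, and since $Q'$ satisfies the hypotheses of the proposition at level $k-1$ with its own "$L$" equal to at least $1$, the induction hypothesis gives a zero $z_*$ of $Q^{(j)}$ for some $1 \le j \le k$ with $|z - z_*| \le p^{-1/(k-1)}$; (iii) the remaining work is to upgrade the exponent from $1/(k-1)$ to $L/k$ when $L$ is large, which is precisely where the Newton-polygon / root-valuation argument enters — one shows that the cancellation forcing $|Q(z)|$ small propagates through the low-order coefficients so that in fact the relevant zero lies much closer. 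I would isolate this propagation as a lemma: if ${\rm ord}_p(Q(z)) \ge L$ and ${\rm ord}_p(Q^{(k)}(z)) = 0$ then among $j = 1, \dots, k$ there is one with ${\rm ord}_p(Q^{(j)}(z)) \ge L(k-j)/k$, and then applying Newton's method to $Q^{(j)}$ at $z$ (whose first derivative $Q^{(j+1)}$ or some further derivative is a unit, by a downward induction on $j$) lands a zero within $p^{-L/k}$. This lemma is the real obstacle; once it is in hand, the proposition follows by choosing $j$ appropriately and bookkeeping the exponents, which I expect to defer (as the authors themselves signal) to a separate section.
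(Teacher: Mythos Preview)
Your base case is already wrong for $Q \in \mathbb{Q}_p[X]$ (as opposed to $\mathbb{Z}_p[X]$). Take $Q(t) = p^{-10}t^2 + t - p$ and $z = 0$: then $|Q'(0)| = 1 \ge 1$ and $|Q(0)| = p^{-1}$, so $k=1$, $L=1$. The Newton step sends $0 \mapsto p$, but $|Q(p)| = |p^{-8}| = p^{8}$, so the iteration blows up, and in fact $Q$ has no root in $\mathbb{Q}_p$ at all (its Newton polygon is a single segment of slope $-11/2$). The proposition is nonetheless true here because the nearby zero is a zero of $Q'$, namely $z_* = -p^{10}/2$ with $|z_*| = p^{-10} \le p^{-1}$. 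The point is that when the higher coefficients of $Q$ are large, the classical Hensel hypothesis $|Q(z)| < |Q'(z)|^2$ is not enough; one must instead apply the iteration to the \emph{right} derivative $Q^{(j)}$, and the choice of $j$ depends on global information about all the $|Q^{(i)}(z)|$.

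This is exactly what the paper does, and it is not an induction on $k$. The argument compares the quantities $|p^{-L}Q^{(j)}(z)|^{1/j}$ for $0 \le j \le n_*$ (where $n_*$ locates the coefficient of maximal absolute value), finds the largest $j$ at which this sequence stops increasing, and applies a \emph{higher-order} Hensel lemma (Lemma~\ref{hensel-L}) to the derivative just below that index. The higher-order lemma replaces the single Hensel condition by a family of conditions $\delta_\ell \le 1$ controlling $|\phi^{(\ell+1)}\phi^{(\ell)\,-1}\,\phi\,\phi'^{-1}|$, which is precisely what is needed to keep the Newton iteration in $\mathbb{Q}_p$ when the ambient polynomial has large coefficients. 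When some $\delta_\ell$ fails, one drops to a shorter chain and applies the lemma to a different $\phi = Q^{(m)}$; the paper organises this as a finite case analysis (Case~0, general case~$j$, subcases) rather than an induction.

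Your inductive dichotomy on $|Q'(z)|$ and the Newton-polygon heuristic are not wrong in spirit, but the ``upgrade lemma'' you isolate is trivially satisfied by $j=k$ and so carries no content as stated; and the Newton polygon only places roots in the algebraic closure, whereas the proposition demands $z_* \in \mathbb{Z}_p$. The mechanism that forces a $\mathbb{Q}_p$-rational zero is the refined Hensel iteration of Lemma~\ref{hensel-L}, and the bookkeeping that makes its hypotheses checkable is exactly the case split on where $j \mapsto |p^{-L}Q^{(j)}(z)|^{1/j}$ turns over.
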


Proposition \ref{sublevel-structure-basic} has the following immediate consequence
which is an extension of Proposition 3.1 in \cite{W-JGA}.

\begin{corollary}\label{sublevel-structure-balls}
Let $Q \in {\mathbb Q}_p[X]$ be a polynomials of degree $d$ and
set 
$$
{\mathcal Z} \ = \ \bigl\{z \in {\mathbb Z}_p: Q^{(j)}(z) = 0 \ {\rm for \ some} \ 0 \le j \le d \bigr\}.
$$ 
Then for $L\ge 1$,
\begin{equation}\label{ball-containment}
\bigl\{z \in {\mathbb Z}_p: |Q(z)| \le p^{-L}, \, |Q^{(k)}(z)/k!| \ge 1 \bigr\} \ \subseteq \ 
\bigcup_{z_{*} \in {\mathcal Z}} B_{p^{-L/k}}(z_{*}).
\end{equation}
Hence there is a constant $C_d$ such that
\begin{equation}\label{sublevel-estimate}
\bigl|\{z \in {\mathbb Z}_p: |Q(z)| \le p^{-L}, \, |Q^{(k)}(z)/k!| \ge 1 \}\bigr| \ \le \ C_d \, p^{-L/k} .
\end{equation}
\end{corollary}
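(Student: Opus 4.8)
The plan is to read off both assertions of Corollary \ref{sublevel-structure-balls} directly from Proposition \ref{sublevel-structure-basic}, which is exactly why it is billed as an immediate consequence. First I would dispose of the degenerate ranges of $k$: if $k > d$ then $Q^{(k)}$ vanishes identically (since $\deg Q = d$), so the left-hand set in \eqref{ball-containment} is empty and there is nothing to prove; and $k = 0$ is impossible since $|Q(z)| \le p^{-L} \le p^{-1} < 1$ contradicts $|Q^{(0)}(z)/0!| = |Q(z)| \ge 1$. So I may assume $1 \le k \le d$.

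For the containment, I would fix $L \ge 1$ and take an arbitrary $z$ in the set on the left of \eqref{ball-containment}, so that $|Q(z)| \le p^{-L}$ and $|Q^{(k)}(z)/k!| \ge 1$. These are precisely the hypotheses of Proposition \ref{sublevel-structure-basic} applied to this $z$, $k$, $L$, which therefore produces a point $z_{*} \in {\mathbb Z}_p$ that is a zero of $Q^{(j)}$ for some $0 \le j \le k \le d$ and satisfies $|z - z_{*}| \le p^{-L/k}$. By definition $z_{*} \in {\mathcal Z}$ and $z \in B_{p^{-L/k}}(z_{*})$; since $z$ was arbitrary this gives \eqref{ball-containment}.

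To pass from the containment to the measure estimate \eqref{sublevel-estimate} I would control $|{\mathcal Z}|$ and the measure of each ball in the union. For each $0 \le j \le d$, the polynomial $Q^{(j)}$ is \emph{nonzero} of degree $d - j$ (for $j = d$ it is the nonzero constant $d!$ times the leading coefficient of $Q$), hence has at most $d - j$ roots in ${\mathbb Q}_p$, a fortiori at most $d-j$ roots in ${\mathbb Z}_p$; summing over $j$ yields $|{\mathcal Z}| \le \sum_{j=0}^{d}(d - j) = d(d+1)/2 =: C_d$. Each ball $B_{p^{-L/k}}(z_{*})$ with $z_{*} \in {\mathbb Z}_p$ and $L/k > 0$ is contained in ${\mathbb Z}_p$ and has $p$-adic measure $p^{-\lceil L/k\rceil} \le p^{-L/k}$, since a ball of radius $p^{-L/k}$ coincides with the ball of radius equal to the next smaller integral power of $p^{-1}$. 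Combining this with \eqref{ball-containment} and subadditivity of the measure gives the bound $C_d \, p^{-L/k}$, which is \eqref{sublevel-estimate}.

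I do not anticipate a genuine obstacle: the substance is entirely in Proposition \ref{sublevel-structure-basic}, and the only points needing a little care are the trivial values of $k$ handled above, the crude count of $|{\mathcal Z}|$ through the degrees of the successive derivatives, and the elementary fact that a $p$-adic ball whose radius is a non-integral power of $p^{-1}$ has measure the next smaller integral power — a fact that only strengthens the estimate.
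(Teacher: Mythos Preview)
Your proposal is correct and matches the paper's approach: the paper simply declares the corollary an ``immediate consequence'' of Proposition \ref{sublevel-structure-basic} and gives no further argument, and what you have written is exactly the routine unpacking of that claim. The only extra content you supply beyond the paper is the explicit count $|{\mathcal Z}| \le d(d+1)/2$ and the handling of the trivial values of $k$, both of which are fine.
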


{\bf Remark}: 
The estimate \eqref{sublevel-estimate} {\it scales} in the following sense: suppose
$P\in {\mathbb Q}_p[X]$ is a polynomial of degree $d$. Then for any $n,m \in {\mathbb Z}$,
we have
\begin{equation}\label{sublevel-scale-invariant}
\bigl|\{z \in {\mathbb Z}_p: |P(z)| \le p^{m}, \, |P^{(k)}(z)/k!| \ge p^n \}\bigr| \ \le \ C_d \, p^{(m-n)/k} .
\end{equation}
To see \eqref{sublevel-scale-invariant}, we apply \eqref{sublevel-estimate} to $Q(z) = p^n P(z)$
where $|Q^{(k)}(z)/k!| = p^{-n} |P^{(k)}(z)/k!|$ and $|Q(z)| = p^{-n} |P(z)|$.
Hence in terms of $Q$, the bound \eqref{sublevel-scale-invariant} is
$$
\bigl|\{z \in {\mathbb Z}_p: |Q(z)| \le p^{m-n}, \, |Q^{(k)}(z)/k!| \ge 1 \}\bigr| \ \le \ C_d \, p^{(m-n)/k}
$$
and this follows from \eqref{sublevel-estimate} with $L = n-m$ since we may assume $L\ge 1$ 
(otherwise if $L\le 0$ or $n\le m$, the trivial bound of 1 implies \eqref{sublevel-scale-invariant}).

Proposition \ref{sublevel-structure-basic} is a consequence of the following higher order
Hensel lemma which in turn is a extension of Proposition 2.1 in \cite{W-JGA}.

\begin{lemma}\label{hensel-L} Fix $L\ge 1$. For $\phi(t) = \sum_{j=0}^n c_j t^j \in {\mathbb Q}_p[X]$,
set $\lambda = \max_{0\le j \le n} |c_j|$ and $\lambda_{+} = \max(\lambda, 1)$.
Suppose $t_0 \in {\mathbb Z}_p$ is a point where $\phi^{(k)}(t_0) \not= 0$  for each $1\le k \le L$. 
Set 
$\delta = |\phi(t_0)\phi'(t_0)^{-1}(\phi^{(L)}(t_0)/L!)^{-1}|$ and for $1\le k \le L-1$, set
$$
\delta_k \ = \ |(\phi^{(k+1)}(z_0) \phi^{(k)}(z_0)^{-1})/(k+1) \ \phi(z_0) \phi'(z_0)^{-1}|.
$$
Suppose $\delta_k \le 1,  \, 2\le k \le L-1$ and suppose $\lambda_{+} \delta \le 1$
and $\delta_1  < 1$ when $L\ge 2$. When $L=1$, we suppose $\lambda_{+} \delta < 1$. Then
there is a $t\in {\mathbb Q}_{p}$ such that
$$
(a) \ \phi(t) \ = \ 0 \ \ \ {\rm and} \ \ \ (b) \ |t-t_0| \ \le \  |\phi(t_0)\phi'(t_0)^{-1}|.
$$
\end{lemma}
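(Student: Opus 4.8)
The plan is to run a Newton-type iteration starting from $t_0$, with step size governed by $\phi(t_0)\phi'(t_0)^{-1}$, and show it converges $p$-adically to a zero of $\phi$. First I would set $h_0 = -\phi(t_0)/\phi'(t_0)$ and define $t_1 = t_0 + h_0$, and more generally $t_{i+1} = t_i - \phi(t_i)/\phi'(t_i)$. The Taylor expansion of $\phi$ at $t_0$ gives
$$
\phi(t_0 + h) \ = \ \phi(t_0) + \phi'(t_0) h + \sum_{k=2}^{n} \frac{\phi^{(k)}(t_0)}{k!} h^k,
$$
and with $h = h_0$ the first two terms cancel, leaving $\phi(t_1) = \sum_{k\ge 2} [\phi^{(k)}(t_0)/k!] h_0^k$. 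The quantities $\delta_k$ are exactly designed to measure $|[\phi^{(k)}(t_0)/k!] h_0^k / (\phi^{(k-1)}(t_0) h_0^{k-1})|$-type ratios (telescoping), so the hypotheses $\delta_k \le 1$ for $2 \le k \le L-1$, $\delta_1 < 1$, and $\lambda_+\delta \le 1$ should translate into a clean estimate $|\phi(t_1)| \le \theta \, |\phi(t_0)|$ for some $\theta < 1$ determined by $\delta_1$, together with control $|\phi'(t_1)| = |\phi'(t_0)|$ (the derivative does not drop because the correction to $\phi'$ is small, again by the $\delta_k$ bounds) and $|\phi^{(k)}(t_1)| = |\phi^{(k)}(t_0)|$ for the relevant range of $k$. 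The role of $\delta$ and $\lambda_+$ is to handle the top-order terms $k$ near $L$ or beyond, where $\phi^{(L)}(t_0)/L!$ and the crude bound $\lambda$ on all coefficients replace the finer ratio bounds; in particular the factor $(\phi^{(L)}(t_0)/L!)^{-1}$ in $\delta$ absorbs the worst high-order Taylor coefficient.

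The key steps, in order, would be: (1) establish the one-step improvement — $|\phi(t_{i+1})| \le \theta\,|\phi(t_i)|$ with $\theta<1$, while the hypotheses ($\delta_k \le 1$, $\lambda_+\delta\le 1$, $\delta_1<1$) are \emph{preserved} at $t_{i+1}$, i.e. the iteration stays in the region where the lemma applies; (2) deduce $|t_{i+1} - t_i| = |\phi(t_i)/\phi'(t_i)| = |\phi(t_i)|/|\phi'(t_0)| \to 0$ geometrically, so $(t_i)$ is Cauchy in the complete field ${\mathbb Q}_p$ and converges to some $t$; (3) pass to the limit to get $\phi(t) = 0$, which is (a); (4) sum the geometric series $|t - t_0| \le \max_i |t_{i+1}-t_i| = |t_1 - t_0| = |\phi(t_0)/\phi'(t_0)|$, using the ultrametric inequality, which is (b). The case $L=1$ is slightly different and is why the hypothesis is strengthened to $\lambda_+\delta < 1$ there: with no intermediate $\delta_k$'s, the strict inequality is what forces the first step to strictly improve $|\phi|$.

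The main obstacle I anticipate is step (1), specifically the bookkeeping that shows the hypotheses are \emph{self-reproducing} along the iteration — one must verify that $|\phi^{(k)}(t_{i})|$ stays equal to $|\phi^{(k)}(t_0)|$ for each $k$ in the range $1 \le k \le L$, so that the $\delta_k$'s and $\delta$ computed at $t_i$ satisfy the same bounds. This requires carefully expanding $\phi^{(k)}(t_i)$ via Taylor's formula around $t_0$ and checking that every correction term is strictly smaller in absolute value than the main term $\phi^{(k)}(t_0)$; the hypotheses $\delta_j \le 1$ (resp. $<1$) and $\lambda_+\delta \le 1$ are calibrated precisely for this, but confirming it term-by-term is the delicate part. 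Since the paper says this is an extension of Proposition 2.1 in \cite{W-JGA}, I would expect to follow that argument's structure closely, the new feature being the introduction of the auxiliary parameter $L$ and the derivatives up to order $L$ rather than just first and second order. Once step (1) is in place, steps (2)–(4) are the standard completeness-and-ultrametric argument.
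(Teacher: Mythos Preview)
Your proposal is correct and follows essentially the same approach as the paper: the paper runs the Newton iteration $t_n = t_{n-1} - \phi(t_{n-1})\phi'(t_{n-1})^{-1}$ and proves by induction the three statements $(1)_n$ $|t_n - t_{n-1}| \le |\phi(t_0)\phi'(t_0)^{-1}|\,\delta_1^{2^{n-1}-1}$, $(2)_n$ $|\phi(t_{n-1})| \le |\phi(t_0)|\,\delta_1^{2^{n-1}-1}$, and $(3)_n$ $|\phi^{(k)}(t_{n-1})/k!| = |\phi^{(k)}(t_0)/k!|$ for $1\le k\le L$, using exactly the Taylor-expansion and telescoping-via-$\delta_k$ bookkeeping you describe. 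The only cosmetic difference is that the paper records the quadratic Newton rate $\delta_1^{2^{n-1}-1}$ rather than a merely geometric $\theta^n$, but either suffices for the conclusions (a) and (b).
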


We postpone the proofs of Proposition \ref{sublevel-structure-basic} and Lemma \ref{hensel-L} to
Section \ref{Prop-structure} and Section \ref{hensel-proof}, respectively.

\section{Completing the proof of Proposition \ref{congruence-bound} -- the upper bound}\label{upper}

As we have seen in Section \ref{lower} establishing the lower bound \eqref{congruence-n=1}, it suffices to prove
\begin{equation}\label{P-congruence}
\sup_{a \in {\mathbb Q}_p} \bigl| \{ {\underline{z}}\in {\mathbb Z}_p^n : \, |P({\underline{z}}) - a| \, \le \, 1 \}\bigr|
\ \le \ C_{d,n} \, H_P^{-1}
\end{equation}
where $H_P = \inf_{{\underline{z}} \in {\mathbb Z}_p^n} H_P({\underline{z}})$ and
$H_P({\underline{z}}) = \max_{|\alpha|\ge 1} (|\partial^{\alpha}P({\underline{z}})/\alpha!|^{1/|\alpha|} )$.

Since $H_P = H_{P-a}$, it suffices to prove $|S| \le C_{d,n} H_P^{-1}$ where 
$$
S \ = \ \{{\underline{z}} \in {\mathbb Z}_p^n : |P({\underline{z}})| \le 1 \} \ \subseteq \ 
\bigcup_{1\le |\alpha|\le d}
\bigl\{ {\underline{z}} \in S  : H({\underline{z}}) = |\partial^{\alpha} P({\underline{z}})/\alpha!|^{1/|\alpha|}
\bigr\}.
$$
Let the sets in the union be denoted by $S_{\alpha}$. 
By \eqref{partial-u}, we have for ${\underline{z}} \in S_{\alpha}$ and $|\alpha|=k$, 
$$
|\partial^{\alpha}P({\underline{z}})/\alpha!| \ \le \ A \, 
\max_{{\underline{u}}\in {\mathcal U}_k} |({\underline{u}}\cdot \nabla)^k P({\underline{z}})/k!|
$$
where ${\mathcal U}_k := \{{\underline{u}}_j : 1\le j \le d(n,k)\}$ are the unit vectors from Lemma
\ref{V-basis}. Also $A = 1$ if $p$ does not
divide any of the coefficients $c_j(\alpha)$ arising in \eqref{partial-u} and if $p$ is a divisor, then
$A = p^m = \max |c_j(\alpha)|$ (the max being taken over all the coefficients
arising in \eqref{partial-u}).
\begin{equation}\label{importantly}
{\rm Importantly, \ we \ have} \ \  A \ \le \ C_{d,n}. 
\end{equation}

Hence for $|\alpha| = k$,
$$
S_{\alpha} \ \subseteq \  \bigcup_{{\underline{u}}\in {\mathcal U}_k} \Big\{
{\underline{z}} \in S : \, A^{-1} H_P^k \le  |({\underline{u}}\cdot \nabla)^k P({\underline{z}})/k!|  \Bigr\}.
$$
Denote the sublevel set on the right by $S_{\alpha, {\underline{u}}}$ so that
\begin{equation}\label{S-alpha}
S \ \subseteq \ \bigcup_{1\le |\alpha| \le d} 
\bigcup_{{\underline{u}}\in {\mathcal U}_{|\alpha|}} {S}_{\alpha,{\underline{u}}} .
\end{equation}

Now fix ${\underline{u}} = (u_1, \ldots, u_n) \in {\mathcal U}_{|\alpha|}$ with $|\alpha| = k$
and consider any matrix $M$, bijectively mapping ${\mathbb Z}_p^n$ onto itself with
$M{\underline{u}} = (1,0,\ldots,0) = {\underline{e}}_1$ 
and $|{\rm det}M| = 1$. Since ${\underline{u}}$ is a unit vector,
then $|u_j| =1$ for some $1\le j\le n$. If $j=1$, we could take for example
$$
M \ = \ u_1^{-1} \begin{pmatrix}
1 & 0 & 0 & \cdots & 0 \\
-u_2 & u_1 & 0 & \cdots & 0 \\
\vdots& & \ddots &  & \vdots \\
-u_n & 0 & 0 & \cdots & u_1
\end{pmatrix} .
$$
We transform ${S}_{\alpha,{\underline{u}}}$ by the change of variables 
${\underline{y}} = M{\underline{z}}$ so that
$$
|{S}_{\alpha,{\underline{u}}}| \ = \ \bigl| \bigl\{ 
{\underline{y}} \in {\mathbb Z}_p^n : \, A^{-1} H_P^k \le  |(\partial/\partial y_1)^k Q({\underline{y}})/k!|,
\ |Q({\underline{y}})| \le 1  \bigr\}\bigr|
$$
where $Q = P\circ M^{-1}$. We will estimate $|{S}_{\alpha,{\underline{u}}}|$ by
fixing ${\underline{y}}' = (y_2, \ldots, y_n)$ and consider the corresponding sublevel set
for the polynomial ${{Q}}_{{\underline{y}}'}(y) := Q(y,{\underline{y}}')$; that is,
$$
|{S}_{\alpha,{\underline{u}}}| \ = \ \int_{{\mathbb Z}_p^{n-1}}
\bigl|\big\{ y \in {\mathbb Z}_p : |{{Q}}_{{\underline{y}}'}(y)| \le 1, \ 
|{Q}_{{\underline{y}}'}^{(k)}(y)/k!| \ge A^{-1} H_P^k \bigr\}\bigr| \, 
dy' .
$$
We employ the scaled bound \eqref{sublevel-scale-invariant} to conclude that
$$
\bigl|\big\{ y \in {\mathbb Z}_p : |{{Q}}_{{\underline{y}}'}(y)| \le 1, \ 
|{Q}_{{\underline{y}}'}^{(k)}(y)/k!| \ge A^{-1} H_P^k \bigr\}\bigr| \ \le \ C_{d,n} \, H_P^{-1}
$$
holds uniformly for ${\underline{y}}' \in {\mathbb Z}_p^{n-1}$. Here we used \eqref{importantly}.
Hence $|S_{\alpha,{\underline{u}}}| \le
C_{d, n} H_P^{-1}$ and so by \eqref{S-alpha}, we have $|S| \le C_{d,n} H_P^{-1}$ which
proves \eqref{P-congruence}, concluding the proof of the upper bound in \eqref{congruence-bound-several}
and the proof of Proposition \ref{congruence-bound}.

\section{The proof of Theorem \ref{main-bound}}\label{1.5-first}

First we make a few reductions. We may assume that $1 \ll_d H$ since otherwise the trivial
bound $|I| \le 1$ implies $|I| \le C_d H^{-1}$ which is stronger than the sought after bound
\eqref{I-several}. Also we may assume that 
$$
P({\underline{z}}) \ = \ \sum_{|\alpha|\ge 1} c_{\alpha} {\underline{z}}^{\alpha}
$$
has no constant term. 

Furthermore we may suppose that $|c_{\alpha}| > 1$ for some $|\alpha|\ge 2$.
In fact, if $|c_{\alpha}| \le 1$ for all $|\alpha|\ge 2$, then $J_P({\underline{z}}) \le 1 < H \le H_P({\underline{z}})$
for all ${\underline{z}} \in {\mathbb Z}_p^n$ and so 
$H_P({\underline{z}}) \equiv |{\underline{c}}|$ where ${\underline{c}} = (c_1, \ldots, c_n)$
are the linear coefficients of $P$. Furthermore,
${\rm e}(P({\underline{z}})) = {\rm e}({\underline{c}} \cdot {\underline{z}})$ and so
$$
I_P(H) \ = \ \int_{{\mathbb Z}_p^n} e({\underline{c}}\cdot{\underline{z}}) \, d{\underline{z}} \ = \ 0
$$
since $1 \ll_d H \le |{\underline{c}}|$ implies $|{\underline{c}}| \ge p$. 

The assumption $|c_{\alpha}| > 1$ for some $|\alpha|\ge 2$ implies $1 < J_P$. In fact let $n$ be the largest
integer such that $|c_{\alpha}| > 1$ for some $|\alpha| = n$ and $|c_{\beta}| \le 1$ for all $|\beta| > n$.
Hence $n\ge 2$ and $|\partial^{\alpha}P({\underline{z}})/\alpha!| \equiv |c_{\alpha}| > 1$, implying
$J_P({\underline{z}}) \ge |c_{\alpha}|^{1/|\alpha|} > 1$ for all ${\underline{z}}\in {\mathbb Z}_p^n$
and so $J_P > 1$. 

Let $r \in {\mathbb Z}$ be defined by $p^{r-1} < J_P \le p^r$. By our reduction to $J_P >1$ above, we have 
$r\ge 1$. 
We decompose 
$$
S_H \ := \ \bigl\{{\underline{z}} \in {\mathbb Z}_p^n : \, H \le H_P({\underline{z}}) \bigr\} \  = \ \bigcup_{s\ge r} E_s
$$ 
where 
$$
E_s \ := \ \bigl\{ {\underline{z}} \in {\mathbb Z}_p^n :  p^{s-1} \ < \ J_P({\underline{z}}) \ \le \ p^s, \ 
H \le H_P({\underline{z}}) \bigr\}.
$$
Also let $t\in {\mathbb Z}$ be defined by $p^{t-1} < H \le p^t$. We have $r\le t$.

We decompose
each $E_s = \cup_{v\ge t} E_{s,v}$ further where
$$
E_{s,v} \ := \ \bigl\{ {\underline{z}} \in E_s :  p^{v-1} \ < \ H_P({\underline{z}}) \ \le \ p^v \, \bigr\}.
$$
If $E_{s,v} \not= \emptyset$, then $p^{s-1} < J_P({\underline{z}}) \le H_P({\underline{z}}) \le p^v$
for some ${\underline{z}}$ which implies $s \le v$. 
We decompose $I_P(H)$ accordingly;
$$
I_P(H) \ = \ \mathop{\sum_{s \ge r, v \ge t}}_{s \le v} \int_{E_{s,v}} {\rm e}(P({\underline{z}})) \, d{\underline{z}} \ =: \ \mathop{\sum_{s \ge r, v\ge t}}_{s\le v} I^{s,v}.
$$

\begin{lemma}\label{Is} For any ${\underline{z}} \in E_{s,v}$, we have 
$J_P({\underline{t}}) = J_P({\underline{z}})$ and $H_P({\underline{t}}) = H_P({\underline{z}})$
whenever $|{\underline{t}} - {\underline{z}}| \le p^{-s}$. 
\end{lemma}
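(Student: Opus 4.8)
The plan is to run an ultrametric Taylor expansion argument. Write $\underline{t} = \underline{z} + \underline{h}$ with $\underline{z}\in E_{s,v}$ and $|\underline{h}|\le p^{-s}$. For any multi-index $\alpha$ with $|\alpha|\ge 1$, expanding $\partial^{\alpha}P$ about $\underline{z}$ gives
$$
\frac{\partial^{\alpha}P(\underline{t})}{\alpha!} \ = \ \sum_{\gamma\ge 0}\frac{(\alpha+\gamma)!}{\alpha!\,\gamma!}\;\frac{\partial^{\alpha+\gamma}P(\underline{z})}{(\alpha+\gamma)!}\;\underline{h}^{\gamma},
$$
where the combinatorial factor $(\alpha+\gamma)!/(\alpha!\gamma!)=\prod_i\binom{\alpha_i+\gamma_i}{\gamma_i}$ is a rational integer, hence of $p$-adic absolute value $\le 1$. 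Since $|\underline{h}^{\gamma}|\le|\underline{h}|^{|\gamma|}\le p^{-s|\gamma|}$, the ultrametric inequality yields
$$
\Bigl|\frac{\partial^{\alpha}P(\underline{t})}{\alpha!}\Bigr| \ \le \ \max\Bigl(\Bigl|\frac{\partial^{\alpha}P(\underline{z})}{\alpha!}\Bigr|,\ \max_{|\gamma|\ge 1}\Bigl|\frac{\partial^{\alpha+\gamma}P(\underline{z})}{(\alpha+\gamma)!}\Bigr|\,p^{-s|\gamma|}\Bigr).
$$

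Next I would dispose of the off-diagonal terms. For every $\gamma$ with $|\gamma|\ge 1$ one has $|\alpha+\gamma|\ge 2$, so by the definition of $J_P(\underline{z})$ we get $|\partial^{\alpha+\gamma}P(\underline{z})/(\alpha+\gamma)!|\le J_P(\underline{z})^{|\alpha|+|\gamma|}$. The key point is that $\underline{z}\in E_{s,v}\subseteq E_s$ forces $J_P(\underline{z})\le p^{s}$, whence $J_P(\underline{z})^{|\gamma|}p^{-s|\gamma|}\le 1$ and therefore every off-diagonal term is at most $J_P(\underline{z})^{|\alpha|}$. Combining,
$$
\Bigl|\frac{\partial^{\alpha}P(\underline{t})}{\alpha!}\Bigr| \ \le \ \max\Bigl(\Bigl|\frac{\partial^{\alpha}P(\underline{z})}{\alpha!}\Bigr|,\ J_P(\underline{z})^{|\alpha|}\Bigr)
$$
for all $|\alpha|\ge 1$. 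Raising to the $1/|\alpha|$ power and maximising over $|\alpha|\ge 2$ gives $J_P(\underline{t})\le J_P(\underline{z})$; maximising instead over $|\alpha|\ge 1$ and using $J_P(\underline{z})\le H_P(\underline{z})$ gives $H_P(\underline{t})\le H_P(\underline{z})$.

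For the reverse inequalities I would apply the identical argument with $\underline{z}$ and $\underline{t}$ interchanged, writing $\underline{z}=\underline{t}+(-\underline{h})$ with $|-\underline{h}|\le p^{-s}$. The only hypothesis used above was $J_P(\underline{z})\le p^{s}$, and we have just shown $J_P(\underline{t})\le J_P(\underline{z})\le p^{s}$, so the argument applies verbatim and yields $J_P(\underline{z})\le J_P(\underline{t})$ and $H_P(\underline{z})\le H_P(\underline{t})$; together with the forward bounds this gives the claimed equalities. There is no genuine obstacle here: the only thing requiring care is the observation that the perturbation couples $\partial^{\alpha}P$ only to derivatives of order $\ge 2$, so the single bound $J_P(\underline{z})\le p^{s}$ (and nothing about $H_P$) suffices to absorb all the $p^{-s|\gamma|}$ gains, which in turn makes the estimate symmetric in $\underline{z}$ and $\underline{t}$.
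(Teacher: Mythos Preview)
Your proof is correct and follows essentially the same approach as the paper: Taylor expand $\partial^{\alpha}P/\alpha!$ about the base point, observe that every off-diagonal term involves a derivative of order $\ge 2$ and is therefore controlled by $J_P(\underline{z})^{|\alpha|+|\gamma|}p^{-s|\gamma|}\le J_P(\underline{z})^{|\alpha|}$ since $J_P(\underline{z})\le p^{s}$, and conclude the forward inequalities. The one minor difference is in the reverse direction: the paper argues by contradiction for $J_P$ and by a case split ($H_P=J_P$ versus $H_P=|\partial^{e_j}P|$) for $H_P$, whereas you simply note that $J_P(\underline{t})\le p^{s}$ is already established and rerun the forward argument with the roles of $\underline{z}$ and $\underline{t}$ swapped --- this is a clean symmetry observation that slightly streamlines the paper's presentation, but the content is the same.
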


\begin{proof}
For ${\underline{z}}\in E_{s,v}$, we have $p^{s-1} < J_P({\underline{z}}) \le p^s$. 
Let $2\le |\alpha| \le d$ be such that 
$J_P({\underline{z}}) = |\partial^{\alpha}P({\underline{z}})/\alpha!|^{1/|\alpha|}$.

Fix ${\underline{t}} = {\underline{z}} + p^s {\underline{w}}$ with ${\underline{w}} \in {\mathbb Z}_p^n$.
Then for any $\beta$ with $|\beta|\ge 1$, we Taylor expand 
$$
\partial^{\beta}P({\underline{t}})/\beta! \ = \ \partial^{\beta}P({\underline{z}})/\beta! + 
\sum_{|\gamma|\ge 1} 
\bigl[\partial^{\beta + \gamma}P({\underline{z}})/\beta! \gamma!\bigr] p^{|\gamma| s} {\underline{w}}^{\gamma} .
$$
We have $|\beta + \gamma|\ge 2$ for $|\gamma|\ge 1$ and
since $(\beta+\gamma)!/\beta!\gamma! \in {\mathbb N}$, 
$$
\bigl|\partial^{\beta+\gamma} P({\underline{z}})/\beta!\gamma!\bigr| \le 
\bigl|\partial^{\beta+\gamma} P({\underline{z}})/(\beta + \gamma)!\bigr| \le 
J_P({\underline{z}})^{|\beta|+|\gamma|}.
$$
Hence for each term in the above sum,
\begin{equation}\label{j,l}
\bigl|\bigl[\partial^{\beta + \gamma}P({\underline{z}})/\beta! \gamma!\bigr] p^{|\gamma| s} {\underline{w}}^{\gamma}\bigr| \ \le \ (p^{-s} J_P({\underline{z}}) )^{|\gamma|} J_P({\underline{z}})^{|\beta|} \ \le \ 
J_P({\underline{z}})^{|\beta|} \le H_P({\underline{z}})^{|\beta|} .
\end{equation}

If $|\beta| \ge 2$, we have $|\partial^{\beta}P({\underline{z}})/\beta!| \le J_P({\underline{z}})^{|\beta|}$
and so $|\partial^{\beta}P({\underline{t}})/\beta!|^{1/|\beta|} \le J_P({\underline{z}})$ by \eqref{j,l}, 
implying $J_P({\underline{t}}) \le J_P({\underline{z}})$. If $\beta = e_j := (0,\ldots,1,0,\ldots)$ with
$1$ in the $j$th entry so that $\partial^{e_j} = \partial/\partial z_j$, 
then $|\partial^{e_j}P({\underline{z}})| \le H_P({\underline{z}})$
and so $|\partial^{e_j}P({\underline{t}})| \le H_P({\underline{z}})$ by \eqref{j,l}, implying
$H_P({\underline{t}}) \le H_P({\underline{z}})$.

Next suppose that $|\partial^{\gamma}P({\underline{t}})/\gamma!| < J_P+P({\underline{z}})^{|\gamma|}$
for every $|\gamma| \ge 2$. Since ${\underline{z}} = {\underline{t}} - p^s {\underline{w}}$, we
Taylor expand around ${\underline{t}}$ to conclude
$$
\partial^{\alpha}P({\underline{z}})/\alpha! \ = \ \sum_{\beta} 
\bigl[\partial^{\alpha + \beta}P({\underline{t}})/\alpha! \beta!\bigr] (-p^s {\underline{w}})^{\beta}
$$
but now for each term in this sum,
$$
\bigl| \bigl[\partial^{\alpha + \beta}P({\underline{t}})/\alpha! \beta!\bigr] (-p^s {\underline{w}})^{\beta}\bigr| \ <
\ J_P+P({\underline{z}})^{|\alpha|},
$$
implying that 
$J_P({\underline{z}})^{|\alpha|} = |\partial^{\alpha} P({\underline{z}})/\alpha! | < J_P({\underline{z}})^{|\alpha|}$
which is a contradiction. Hence there is a $\gamma$ with $|\gamma| \ge 2$ such that
$$
J_P({\underline{z}}) \le \bigl| \partial^{\gamma}P({\underline{t}})/\gamma!\bigr|^{1/|\gamma|} \ \le \ 
J_P({\underline{t}}),
$$
completing the proof that $J_P({\underline{t}}) = J_P({\underline{z}})$.

If $H_P({\underline{z}}) = J_P({\underline{z}})$, then since 
$J_P({\underline{z}}) = J_P({\underline{t}}) \le H_P({\underline{t}})$, we have
$H_P({\underline{t}}) = H_P({\underline{z}})$ in this case. It remains to consider the case 
$J_P({\underline{z}}) < H_P({\underline{z}})$ and therefore 
$H_P({\underline{z}}) = |\partial^{e_j}P({\underline{z}})|$ for some $j$. As above, we Taylor expand
around ${\underline{t}}$ so that
\begin{equation}\label{taylor-t}
\partial^{e_j}P({\underline{z}}) \ = \ \partial^{e_j} P({\underline{t}}) \ + \sum_{|\beta|\ge 1} 
\bigl[\partial^{e_j + \beta}P({\underline{t}})/\beta!\bigr] (-p^s {\underline{w}})^{\beta} .
\end{equation}
For each term in this sum,
\begin{equation}\label{taylor-t-sum}
\bigl| \bigl[\partial^{e_j + \beta}P({\underline{t}})/\beta!\bigr] (-p^s {\underline{w}})^{\beta}\bigr| \ \le
\ J_P({\underline{t}})^{|\beta| +1} p^{-|\beta| s} \ \le \ J_P({\underline{z}}) \ < \ H_P({\underline{z}})
\end{equation}
since $J_P({\underline{t}}) = J_P({\underline{z}})$. Hence if 
$|\partial^{e_j}P({\underline{t}})| < H_P({\underline{z}})$, then \eqref{taylor-t} and \eqref{taylor-t-sum} implies
$H_P({\underline{z}}) = |\partial^{e_j}P({\underline{z}})| < H_P({\underline{z}})$ which is a contradiction.
Therefore $H_P({\underline{z}}) \le |\partial^{e_j}P({\underline{t}})| \le H_P({\underline{t}})$, completing
the proof that $H_P({\underline{t}}) = H_P({\underline{z}})$.
\end{proof} 
 
For every $s\ge r, v \ge t$ with $s\le v$, we set $R_s = [{\mathbb Z}/p^s{\mathbb Z}]^n$ and 
$$
{\mathcal E}_{s,v} \ = \ \bigl\{{\underline{t}}\in R_s: p^{s-1} < J_P({\underline{t}}) \le p^s, \ p^{v-1} <
H_P({\underline{t}}) \le p^v \,  \bigr\}
$$  
with the added condition $H \le H_P({\underline{t}})$ when $v=t$.
By Lemma \ref{Is}, we see that 
$$
E_{s,v} \ = \ \bigcup_{{\underline{t}}\in {\mathcal E}_{s,v}} B_{p^{-s}}({\underline{t}})
$$
and so
$$
I^{s,v} \ = \ \sum_{{\underline{t}} \in {\mathcal E}_{s,v}} \, \int_{B_{p^{-s}}({\underline{t}})} 
{\rm e}(P({\underline{z}})) \, d{\underline{z}} \ =: \ 
\sum_{{\underline{t}}\in {\mathcal E}_{s,v}} I^{s,v}_{\underline{t}}.
$$


\begin{lemma}\label{s<v}
For each ${\underline{t}} \in {\mathcal E}_{s,v}$ with $s<v$, \ $I^{s,v}_{\underline{t}} = 0$.
\end{lemma}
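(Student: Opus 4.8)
The plan is to rescale the ball $B_{p^{-s}}({\underline{t}})$ to ${\mathbb Z}_p^n$ and observe that, because $s<v$, the phase becomes — modulo ${\mathbb Z}_p$ — a nonconstant \emph{linear} function of the rescaled variable, so the integral vanishes. Concretely, substituting ${\underline{z}} = {\underline{t}} + p^s {\underline{w}}$ with ${\underline{w}}$ ranging over ${\mathbb Z}_p^n$ gives $d{\underline{z}} = p^{-sn}\, d{\underline{w}}$, and Taylor expanding $P$ about ${\underline{t}}$ yields
$$
I^{s,v}_{\underline{t}} \ = \ {\rm e}(P({\underline{t}}))\, p^{-sn} \int_{{\mathbb Z}_p^n} {\rm e}\Bigl( p^s \sum_{j=1}^{n} \frac{\partial P}{\partial z_j}({\underline{t}})\, w_j \ + \ \sum_{|\beta|\ge 2} \frac{\partial^{\beta}P({\underline{t}})}{\beta!}\, p^{s|\beta|}\, {\underline{w}}^{\beta} \Bigr) \, d{\underline{w}}.
$$

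First I would kill the higher-order Taylor terms. Since ${\underline{t}}\in{\mathcal E}_{s,v}$ satisfies $J_P({\underline{t}})\le p^s$, for each $\beta$ with $|\beta|\ge 2$ we have $|\partial^{\beta}P({\underline{t}})/\beta!|\le J_P({\underline{t}})^{|\beta|}\le p^{s|\beta|}$, hence $|(\partial^{\beta}P({\underline{t}})/\beta!)\, p^{s|\beta|}\, {\underline{w}}^{\beta}|\le 1$ for ${\underline{w}}\in{\mathbb Z}_p^n$; since ${\rm e}\equiv 1$ on ${\mathbb Z}_p$ these terms drop out, leaving
$$
I^{s,v}_{\underline{t}} \ = \ {\rm e}(P({\underline{t}}))\, p^{-sn} \prod_{j=1}^{n} \int_{{\mathbb Z}_p} {\rm e}\Bigl( p^s \frac{\partial P}{\partial z_j}({\underline{t}})\, w \Bigr) \, dw.
$$

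It then suffices to show one factor is $0$. As $s<v$ are integers, $J_P({\underline{t}})\le p^s\le p^{v-1}<H_P({\underline{t}})$, so the maximum defining $H_P({\underline{t}})$ is not attained by any multi-index of order $\ge 2$, hence is attained by some $\partial P/\partial z_{j_0}({\underline{t}})$, whose absolute value is an integer power of $p$ lying in $(p^{v-1},p^v]$ and therefore equals $p^v$. Consequently $|p^s\, \partial P/\partial z_{j_0}({\underline{t}})| = p^{v-s}\ge p$, so $w\mapsto {\rm e}(p^s\, \partial P/\partial z_{j_0}({\underline{t}})\, w)$ is a nontrivial additive character of ${\mathbb Z}_p$ and $\int_{{\mathbb Z}_p}{\rm e}(p^s\, \partial P/\partial z_{j_0}({\underline{t}})\, w)\, dw = 0$; thus $I^{s,v}_{\underline{t}}=0$. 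The rest is routine $p$-adic Fourier analysis; the only real content is the single observation that the hypothesis $s<v$ does double duty — it makes $J_P({\underline{t}})$ small enough for the $p^s$-rescaling to push all quadratic-and-higher Taylor terms into ${\mathbb Z}_p$, and it simultaneously forces $H_P({\underline{t}})$ to be carried by a linear coefficient that is $p$-adically $\ge p^{v-s}\ge p$ after rescaling, which is precisely what annihilates the integral.
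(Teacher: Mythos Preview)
Your proof is correct and follows essentially the same approach as the paper: rescale ${\underline{z}}={\underline{t}}+p^s{\underline{w}}$, use $J_P({\underline{t}})\le p^s$ to push all Taylor terms of order $\ge 2$ into ${\mathbb Z}_p$, and use $s<v$ to force $H_P({\underline{t}})=|\nabla P({\underline{t}})|=p^v$ so that the remaining linear phase has a coefficient of size $p^{v-s}\ge p$ and the integral vanishes. The only cosmetic difference is that you factor the resulting linear-phase integral into one-dimensional pieces, whereas the paper treats the $n$-dimensional character integral directly.
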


\begin{proof}
Fix ${\underline{t}} \in {\mathcal E}_{s,v}$ with $s<v$. Then
$J_P({\underline{t}}) \le p^s \le p^{v-1} < H_P({\underline{t}})$ which implies 
$H_P({\underline{t}}) = |\nabla P({\underline{t}})|$ and so $|\nabla P({\underline{t}})| = p^v$. 

For ${\underline{z}} = {\underline{t}} + p^s {\underline{w}}$,
Taylor expand
$$
P({\underline{z}}) = P({\underline{t}}) + \nabla P({\underline{t}}) \cdot p^s {\underline{w}} \ + \ E
$$
where
$$
E \ = \ \sum_{|\alpha|\ge 2} \bigl[\partial^{\alpha}P({\underline{t}})/\alpha!\bigr] (p^s {\underline{w}})^{\alpha} \ \in \ {\mathbb Z}_p
$$
since $J_P({\underline{t}}) \le p^s$. Hence
$$
I^{s,v}_{\underline{t}} \ = \ p^{-sn} {\rm e}(P({\underline{t}})) \, \int_{{\mathbb Z}_p^n} 
{\rm e}(\nabla P({\underline{t}}) \cdot {\underline{w}}) \, d{\underline{w}} \ = \ 0
$$
since $p^{-v} | \, \nabla P({\underline{t}})$ and $v \ge 1$. 
\end{proof}

Therefore 
$$
I_P(H) \ = \ \sum_{s\ge t} \, \sum_{{\underline{t}}\in {\mathcal E}_s'} \int_{B_{p^{-s}}({\underline{t}})}
{\rm e}(P({\underline{z}})) \, d{\underline{z}} \ =: \ \sum_{s\ge t} I^s_{{\underline{t}}}
$$
where ${\mathcal E}_s' = \{ {\underline{z}} :  p^{-s+1} < J_P({\underline{t}}) \le H_P({\underline{t}}) \le p^s \}$
with the added condition $H \le H_P({\underline{t}})$ when $s = t$.  

We set
${\mathcal E} := \{s : {\mathcal E}_s' \not= \emptyset\}$. Then $r\le t \le s_{*} := \inf {\mathcal E}$. 
Note that if ${\underline{t}} \in {\mathcal E}_s'$, then $|\nabla P({\underline{z}})| \le p^s$
for any $|{\underline{z}} - {\underline{t}}| \le p^{-s}$. In fact, Taylor expand
\begin{equation}\label{grad-constant}
(\partial/\partial z_j) P({\underline{z}}) \ = \ (\partial/\partial z_j) P({\underline{t}}) \ + \
\sum_{|\alpha|\ge 1} 
\bigl[\partial^{\alpha +e_j} P ({\underline{t}})/(\alpha + e_j)!\bigr] (p^s {\underline{w}})^{\alpha},
\end{equation}
noting each term in the above sum has absolute value at most 
$(J({\underline{t}}) p^{-s})^{|\alpha|} J({\underline{t}}) \le p^s$. Therefore 
$|\nabla P({\underline{z}})|\le p^s$.

Hence
\begin{equation}\label{I-s}
|I^s| \ := \ \bigl| \sum_{{\underline{t}} \in {\mathcal E}_s'} I^s_{\underline{t}} \bigr| \ \le \ 
\bigl|\{{\underline{z}}\in {\mathbb Z}_p^n : p^{s-1} < J_P({\underline{z}}) \le p^s, \ 
|\nabla P({\underline{z}})| \le p^s \} \bigr|.
\end{equation}
Let ${\mathcal S}_s$ denote the sublevel set on the right above. 

If ${\underline{z}} \in {\mathcal S}_s$,
then there is an $2\le |\alpha| = k$ such that 
$|\partial^{\alpha}P({\underline{z}})/\alpha!| =: p^{ks - j} > p^{k(s-1)}$ (so $0\le j \le k-1$).
By \eqref{partial-u}, we have
$$
p^{k(s-1)} \ < \ |\partial^{\alpha}P({\underline{z}})/\alpha!| \ \le \ A \, 
\max_{{\underline{u}}\in {\mathcal U}_k} |({\underline{u}}\cdot \nabla)^k P({\underline{z}})/k!|
$$
where we recall ${\mathcal U}_k := \{{\underline{u}}_j : 1\le j \le d(n,k)\}$ are the unit vectors from Lemma
\ref{V-basis}.

Finally note that $|{\underline{u}}\cdot \nabla)P({\underline{z}})| \le |\nabla P({\underline{z}})| \le p^s$
and so
$$
{\mathcal S}_s \ \subseteq \ \bigcup_{k=2}^d \bigcup_{{\underline{u}}\in {\mathcal U}_k} \Big\{
{\underline{z}} \in {\mathbb Z}_p^n : \, p^{ks-(k-1)} \le A |({\underline{u}}\cdot \nabla)^k P({\underline{z}})/k!|,
\ |({\underline{u}}\cdot \nabla)P({\underline{z}})| \le p^s   \Bigr\}.
$$
Denote the sublevel set on the right by ${\mathcal S}_{s, {\underline{u}}}$ so that
\begin{equation}\label{S-s}
{\mathcal S}_s \ \subseteq \ \bigcup_{k=2}^d 
\bigcup_{{\underline{u}}\in {\mathcal U}_k} {\mathcal S}_{s,{\underline{u}}} .
\end{equation}

As before we fix ${\underline{u}} = (u_1, \ldots, u_n) \in {\mathcal U} := \cup_{k\ge 2} \, {\mathcal U}_k$
and consider any matrix $M$, bijectively mapping ${\mathbb Z}_p^n$ onto itself with
$M{\underline{u}} = (1,0,\ldots,0) = {\underline{e}}_1$ 
and $|{\rm det}M| = 1$. 
We transform ${\mathcal S}_{s,{\underline{u}}}$ by the change of variables 
${\underline{y}} = M{\underline{z}}$ so that
$$
|{\mathcal S}_{s,{\underline{u}}}| \ = \ \bigl| \bigl\{ 
{\underline{y}} \in {\mathbb Z}_p^n : \, p^{ks-(k-1)} \le A |(\partial/\partial y_1)^k Q({\underline{y}})/k!|,
\ |(\partial/\partial y_1)Q({\underline{y}})| \le p^s   \bigr\}\bigr|
$$
where $Q = P\circ M^{-1}$. We will estimate $|{\mathcal S}_{s,{\underline{u}}}|$ by
fixing ${\underline{y}}' = (y_2, \ldots, y_n)$ and consider the corresponding sublevel set
for the polynomial ${\tilde{Q}}(y) := (\partial/\partial y) Q(y,{\underline{y}}')$; that is, we will
estimate the measure of
\begin{equation}\label{Q-tilde}
\big\{ y \in {\mathbb Z}_p : |{\tilde{Q}}(y)| \le p^s, \ |{\tilde{Q}}^{(k-1)}(y)/k!| \ge A^{-1} p^{ks-(k-1)} \bigr\}
\end{equation}
by employing the scale-invariant bound \eqref{sublevel-scale-invariant}. 

In fact by applying the bound \eqref{sublevel-scale-invariant} to the sublevel set in \eqref{Q-tilde}, we have
$$
\bigl| \big\{ y \in {\mathbb Z}_p : |{\tilde{Q}}(y)| \le p^s, \ |{\tilde{Q}}^{(k-1)}(y)/k!| \ge A^{-1} p^{ks-(k-1)} \bigr\} \bigr| \ \le \ C_d [|k|^{-1}A]^{1/(k-1)} p^{-s+1}
$$
since
$$
\Bigl[ \frac{p^s}{|k|A^{-1}p^{ks - (k-1)}}\Bigr]^{1/(k-1)} \ = \ [|k|^{-1}A]^{1/(k-1)} \, p^{-s+1}.
$$
Integrating out ${\underline{y}}' = (y_2, \ldots, y_n)$ over ${\mathbb Z}_p^{n-1}$ gives
$$
|{\mathcal S}_{s, {\underline{u}}}| \ \le \ C_d \, [|k|^{-1}A]^{1/(k-1)} \, p^{-s+1}
$$
and so by \eqref{I-s} and \eqref{S-s} (and the fact that $|k|^{-1} A \lesssim_{d,n} 1$ by \eqref{importantly}),
$$
|I^s| \ \le \ |{\mathcal S}_s| \ \le \ C_{d,n} \, p^{-s+1},
$$
implying
\begin{equation}\label{I-several-bound}
|I_P(H)| \ = \ |\sum_{s \in {\mathcal E}} I^s | \ \le \ C_{d,n} \, p \ 
\sum_{s\in {\mathcal E}} \, p^{-s} \ 
\le \ C_{d,n}  \ p^{-s_{*} + 1} \ \le \ C_{d,n} \, p \ H^{-1}
\end{equation}
since $H \le p^t \le p^{s_{*}}$, completing the proof of Theorem \ref{main-bound}.

\section{The proof of Theorem \ref{H-main} - the first part}\label{H-first}

We now restrict our attention to $n=1$ and polynomials $P(z)$ of a single variable $z$.

If $s_{*} \ge t+1$, then \eqref{I-several-bound} implies $|I_P(H)| \le C_{d} \, p^{-t} \le C_{d}\, H^{-1}$ which implies the desired bound \eqref{I-single}.
Therefore we may assume $s_{*} =\inf {\mathcal E} = t$ and so
\begin{equation}\label{I-reduction}
I_P(H) \ = \ I^{s_{*}} \ + \ O_{d}(H^{-1}) \ = \ \sum_{{\underline{t}}\in {\mathcal E}_{s_{*}}'} 
I^{s_{*}}_{\underline{t}} \ + \ O_{d}(H^{-1}) .
\end{equation}

Since $n=1$, our variables ${\underline{z}} = z \in {\mathbb Z}_p$ and ${\underline{t}} = t \in R_s$
only have a single component. Recall that
$$
{\mathcal E}_{s_{*}}' \ = \ \{t \in R_{s_{*}} : p^{s_{*}-1} < J_P(t) \le p^{s_{*}}, \, |P'(t)| \le p^{s_{*}} \
{\rm and} \ H \le H_P(t) \}.
$$

We set $\lambda = \max |c_j|$ where
$P(z) = c_d z^d + \cdots + c_1 z$. Our reduction to $1 \ll_d H$ implies $H\le \lambda$ and so
in this case, $p\le \lambda$. Let $n$ be the largest integer $n$ such that $|c_n| = \lambda$
and $|c_j| < \lambda$ for all $j>n$. Then $|P^{(n)}(z)/n!| \equiv |c_n| = \lambda$ and furthermore
$|P^{(j)}(z)/j!| < \lambda^{1/j} < \lambda^{1/n} = |P^{(n)}(z)/n!|^{1/n}$ for each $j>n$, implying
that $J_P(z) = J'(z) := \max_{2\le k \le n} |P^{(k)}(z)/k!|^{1/k}$.

By Lemma \ref{Is} and \eqref{grad-constant}, we have
$$
I^{s_{*}} \ = \ \int_{E_{s_{*}}'} {\rm e}(P(z)) \, dz
$$
where (recall $J_P(z) = J'(z)$)
$$
E_{s_{*}}' \ = \ \bigl\{ z \in {\mathbb Z}_p : p^{s_{*}-1} < J_P(z) \le p^{s_{*}}, \
|P'(z)| \le p^{s_{*}} \ {\rm and} \ H \le H_P(z) \bigr\}.
$$ 

We decompose $E_{s_{*}}' = F_0 \cup \cdots \cup F_{n-2}$ where\footnote{In this section,
we are suppressing dividing $P^{(\ell)}(z)$ by $\ell!$ 
in the terms $|P^{(\ell)}(z)/\ell!|^{1/\ell}$ defining $J_P(z)$ for notational convenience although keeping
the factorials is more natural.}  
$$
F_0 := \{ z \in E_{s_{*}}' : J_P(z) = |P^{(n)}(z)|^{1/n} \}, \  F_1 :=  
\bigl\{ z\in E_{s_{*}}' \setminus F_0 :  J_P(z) = |P^{(n-1)}(z)|^{1/(n-1)} \bigr\} 
$$
and inductively,
$$
 F_k \ := \ \bigl\{ z \in E_{s_{*}}' \setminus \cup_{j=0}^{k-1} F_j : J_P(z) \ = \  |P^{(n-k)}(z)|^{1/(n-k)} \bigr\} .
$$

For $z \in F_0$, we have
$$
|P^{(n-k)}(z)|^{\frac{1}{n-k}} \ \le \ |P^{(n)}(z)|^{\frac{1}{n}}
$$
for every $0\le k \le n-2$. We apply Lemma \ref{hensel-L} with $L=1$
to $\phi(t) = P^{(n-1)}(t)$ and $t_0 = z$. We have $\delta = |\phi(t_0) \phi'(t_0)^{-2}|$
and so
$$
\delta \le  |P^{(n)}(z)|^{(n-1)/n} |P^{(n)}(z)|^{-2} \ = \ |P^{(n)}(z)|^{-(n+1)/n} .
$$
Since $\lambda = |P^{(n)}(z)|$, we see that 
$$
\lambda \delta \ \le \  |P^{(n)}(z)|^{-1/n} \ = \lambda^{-1/n} \ < \ 1
$$ 
and so
we conclude there exists a zero $z_{*} \in {\mathbb Q}_p$ of $P^{(n-1)}$ with 
$$
|z_{*} - z| \ \le \  |\phi(t_0) \phi'(t_0)^{-1}| \ \le \ |P^{(n)}(z)|^{-1/n}  \ = \ J_P(z)^{-1} \ < p^{-s_{*}+1} .
$$ 
Therefore $|z_{*} - z| \le p^{-s_{*}}$ implying $z_{*} \in {\mathbb Z}_p$ and 
\begin{equation}\label{F0}
F_0 \ \subseteq \ \bigcup_{w\in {\mathcal Z}} B_{p^{-s_{*}}}(w)
\end{equation}
where ${\mathcal Z} = \{ w \in {\mathbb Z}_p : P^{(j)}(w) = 0 \ {\rm for \ some} \ 1\le j \le d\}$.

Next we consider $z\in F_1$ so that
$$
|P^{(n-k)}(z)|^{\frac{1}{n-k}} \ \le \  |P^{(n-1)}(z)|^{\frac{1}{n-1}}
$$
for every $0\le k \le n-2$ with strict inequality when $k=0$. We apply Lemma \ref{hensel-L} with $L=2$ to
$\phi(t) = P^{(n-2)}(t)$ and $t_0 = z$. Recall that in this case, 
$\delta = \phi(t_0) \phi'(t_0)^{-1} \phi''(t_0)^{-1}$ and so
$$
\delta \ \le \ |P^{(n-1)}(z)|^{-1/(n-1)} |P^{(n)}(z)|^{-1},
$$
implying 
$$
\lambda \delta \ \le \ |P^{(n-1)}(z)|^{-1/(n-1)} \ = \ J_P(z)^{-1}  \ < \ p^{-s_{*}+1}
$$
and so $\lambda \delta \le p^{-s_{*}} < 1$ since $1\le r \le s_{*}$.

We also need to verify the condition $\delta_1 < 1$ where 
$\delta_1 = \phi(t_0) \phi'(t_0)^{-2} \phi''(t_0)$.
We have
$$
\delta_1 \ \le \ |P^{(n-1)}(z)|^{-n/(n-1)} |P^{(n)}(z)| \ < \ 1
$$
and so $\delta_1 < 1$.
Hence there exists a zero $z_{*}$ of $P^{(n-2)}$ with 
$$
|z_{*} - z| \ \le \ |\phi(t_0) \phi'(t_0)^{-1}| \ \le \  |P^{(n-1)}(z)|^{-1/(n-1)} \ = \ J_P(z)^{-1} < p^{-s_{*}+1} 
$$ 
and so $|z_{*} - z| \le p^{-s_{*}}$ implying $z_{*} \in {\mathbb Z}_p$. Hence
\begin{equation}\label{F1}
F_1 \ \subseteq \ \bigcup_{w\in {\mathcal Z}} B_{p^{-s_{*}}}(w) .
\end{equation}

For general $z\in F_k$ with $k < n-2$ (the case $k=n-2$ is special), we have
$$
|P^{(n-r)}(z)|^{\frac{1}{n-r}} \ \le \ |P^{(n-k)}(z)|^{\frac{1}{n-k}}
$$
for every $0\le r \le n-2$ with strict inequality for $0\le r \le k-1$. It is natural to apply Lemma \ref{hensel-L} with $L = k+1$ to
$\phi(t) = P^{(n-k-1)}(t)$ and $t_0 = z$. First let us verify the condition $\lambda \delta < 1$
where 
$$
\delta \ = \ |\phi(t_0) \phi'(t_0)^{-1} \phi^{(k+1)}(t_0)^{-1}| \ = \ |P^{(n-k-1)}(z)|  |P^{(n-k)}(z)
P^{(n)}(z)|^{-1} .
$$
We have
$$
\lambda \delta \ \le \ \lambda |P^{(n-k)}(z)|^{-1/(n-k)} |P^{(n)}(z)|^{-1} \ = \
|P^{(n-k)}(z)|^{-1/(n-k)} \ = \ J_P(z)^{-1}
$$
implying $\lambda \delta < p^{-s_{*}+1}$ and so $\lambda \delta \le p^{-s_{*}} < 1$. 

Next let us verify that $\delta_{1} < 1$.
$$
\delta_1 \ = \ |\phi(t_0) \phi'(t_0)^{-2} \phi''(t_0)| \ = \
 |P^{(n-k-1)}(z)|  |P^{(n-k)}(z)|^{-2} |P^{(n-k+1)}(z)| .
$$
Since $|P^{(n-k+1)}(z)|^{1/(n-k+1)} < |P^{(n-k)}(z)|^{1/(n-k)}$, we have
$$
\delta_1 \ \le \   |P^{(n-k)}(z)|^{-(n-k+1)/(n-k)} |P^{(n-k+1)}(z)| \ < \ 1.
$$

Unfortunately it is {\bf not} the case that for $2 \le \ell \le L-1$, $\delta_{\ell} \le 1$
where
$$
\delta_{\ell}\  := |\phi(t_0) \phi'(t_0)^{-1} \phi^{(\ell+1)}(t_0) \phi^{(\ell)}(t_0)^{-1}|.
$$
Note that in our case,
$$
\delta_{\ell} \ = \ |P^{(n-k-1)}(z) P^{(n-k)}(z)^{-1}  P^{(n-k+\ell)}(z) P^{(n-k+\ell - 1)}(z)^{-1}|
$$
and so
$$
\delta_{\ell} \ \le \ |P^{(n-k)}(z)|^{-1/(n-k)} | P^{(n-k+\ell)}(z) P^{(n-k+\ell - 1)}(z)^{-1}|.
$$
We would need
$$
 | P^{(n-k+\ell)}(z) P^{(n-k+\ell - 1)}(z)^{-1}| \ \le \  |P^{(n-k)}(z)|^{1/(n-k)} \eqno{(S_{\ell})}
$$
to hold for every $2\le \ell \le k$. But $(S_{\ell})$ does not necessarily hold for any $\ell$. 

Therefore we will decompose the set $F_k$ further when $k\ge 2$. We write 
$F_k = F_k^1 \cup \cdots \cup F_k^{k}$
and define $F_k^r$ inductively. First
$$
F_k^1 \ := \ \bigl\{ z \in F_k : |P^{(n- 1)}(z)| |P^{(n-k)}(z)|^{1/(n-k)} <  | P^{(n)}(z) | \bigr\}.
$$
Hence if $z \notin F_k^1$, then $(S_k)$ holds.

Next we define
$$
F_k^2 \ := \ \bigl\{ z \in F_k \setminus F_k^1 : |P^{(n- 2)}(z)| |P^{(n-k)}(z)|^{1/(n-k)} <  | P^{(n-1)}(z) | \bigr\}.
$$
Hence if $z \notin [F_k^1 \cup F_k^2]$, then both
$(S_k)$ and $(S_{k-1})$ hold.

In general, for $1\le r \le k-1$, we define
$$
F_k^r \ := \ \bigl\{ z \in F_k \setminus [F_k^1 \cup \cdots \cup F_k^{r-1}] : 
|P^{(n- r)}(z)| |P^{(n-k)}(z)|^{1/(n-k)} <  | P^{(n-r+1)}(x) | \bigr\}.
$$
Hence if $z \notin [F_k^1 \cup \cdots \cup F_k^r]$, then 
$(S_{\ell})$ holds for every $k-r+1\le \ell \le k$.

Finally, we define 
$$
F_k^{k} \ := \ \{ z \in F_k \setminus [F_k^1 \cup \cdots \cup F_k^{k-1} \}
$$
so that for $z \in F_k^k$, the property $(S_{\ell})$ holds for every $2\le \ell \le k$. Hence for $z \in F_k^k$,
we can successfully apply Lemma \ref{hensel-L} to find a zero $z_{*} \in {\mathbb Q}_p$ of $P^{(n-k-1)}$
such that 
$$
|z_{*} - z| \ \le \  |\phi(t_0) \phi'(t_0)^{-1}| \ \le \  |P^{(n-k)}(z)|^{-1/(n-k)} 
\  = \ J(z)^{-1} \ < \ p^{-s_{*} +1}
$$ 
and so $|z_{*} - z| \le p^{-s_{*}}$ which implies $z_{*} \in {\mathbb Z}_p$. Hence
\begin{equation}\label{Fkk}
F_k^k \ \subseteq \ \bigcup_{w\in {\mathcal Z}} B_{p^{-s_{*}}}(w) .
\end{equation}

We now examine $z \in F_k^1$. Here we apply Lemma \ref{hensel-L} with $L=1$ to
$\phi(t) = P^{(n-1)}(t)$ and $t_0 = z$. In this case $\delta = \phi(t_0) \phi'(t_0)^{-2}$
so that
$$
\delta \ = \ |P^{(n-1)}(z) P^{(n)}(z)^{-2}| \ < \ |P^{(n-k)}(z)|^{-1/(n-k)} |P^{(n)}(z)|^{-1} 
$$
and so
$$
\lambda \delta \ < \  |P^{(n-k)}(z)|^{-1/(n-k)} \ = \ J_P(z)^{-1} \ \le p^{-s_{*}+1},
$$
implying $\lambda \delta \le p^{-s_{*}} < 1$.
Hence there exists a zero $z_{*} \in {\mathbb Q}_p$ of $P^{(n-1)}$ such that
$$
|z_{*} - z| \ \le \ |\phi(t_0) \phi'(t_0)^{-1}| \ < \ |P^{(n-k)}(z)|^{-1/(n-k)} \ = \  J_P(z)^{-1} \ \le \ p^{-s_{*}+1}
$$
and so $|z_{*} - z| \le p^{-s_{*}}$ which implies $z_{*} \in {\mathbb Z}_p$. Hence
\begin{equation}\label{Fk1}
F_k^1 \ \subseteq \ \bigcup_{w\in {\mathcal Z}} B_{p^{-s_{*}}}(w) .
\end{equation}

For $z \in F_k^r, 2\le r \le k-1$, we have $z \notin [F_k^1 \cup \cdots \cup F_k^{r-1}]$ and so
$(S_{\ell})$ holds for $k-r +2 \le \ell \le k$.
We will apply Lemma \ref{hensel-L} with $L=r$ to
$\phi(t) = P^{(n-r)}(t)$ and $t_0 = z$. In this case, 
$$
\delta \ = \ |\phi(t_0)\phi'(t_0)^{-1} \phi^{(L)}(t_0)^{-1}| 
$$
and so
$$
\delta \ = \
|P^{(n-r)}(z) P^{(n-r+1)}(z)^{-1} P^{(n)}(z)^{-1}| \ < \  |P^{(n-k)}(z)|^{-1/(n-k)} |P^{(n)}(z)^{-1}|.
$$
Hence
$$
\lambda \delta \ < \  |P^{(n-k)}(z)|^{-1/(n-k)} \ = \  J_P(z)^{-1} \ < \ p^{-s_{*}+1} 
$$
and so $\lambda \delta \le p^{-s_{*}} < 1$.

Next we need to control, for $1\le \ell \le r-1$,
$$
\delta_{\ell} \ = \ |\phi(t_0) \phi'(t_0)^{-1} \phi^{(\ell+1)}(t_0) \phi^{(\ell)}(t_0)^{-1}|
$$
or 
$$
\delta_{\ell} \ = \ |P^{(n-r)}(t_j) P^{(n-r+1)}(t_j)^{-1} P^{(n-r+\ell + 1)}(t_j) P^{(n-r+\ell)}(t_j)^{-1}|.
$$
Note that
$$
\delta_{\ell} \ < \  |P^{(n-k)}(z)|^{-1/(n-k)} |P^{(n-r+\ell + 1)}(z) P^{(n-r+\ell)}(z)^{-1}|
$$
and so, when $\ell = 1$, using $(S_{k-r+2})$,
$$
\delta_1 \ < \  |P^{(n-k)}(z)|^{-1/(n-k)} |P^{(n-k)}(z)|^{1/(n-k)} \ = \ 1.
$$
For $2\le \ell \le r-1$, using $(S_{k-r+\ell +1})$,
$$
\delta_{\ell} \ < \ |P^{(n-k)}(z)|^{-1/(n-k)} |P^{(n-k)}(z)|^{1/(n-k)} \ = \ 1.
$$
Therefore Lemma \ref{hensel-L} gives us
a zero $z_{*} \in {\mathbb Q}_p$ of $P^{(n-r)}$ such that
$$
|z_{*} - z| \ \le \  |\phi(t_0) \phi'(t_0)^{-1}| \ \le \ |P^{(n-k)}(z)|^{-1/(n-k)} \ = \ J_P(z)^{-1} \ < \
p^{-s_{*}+1}
$$
and so $|z_{*} - z| \le p^{-s_{*}}$. Hence $z_{*} \in {\mathbb Z}_p$ and
\begin{equation}\label{Fkr}
F_k^r \ \subseteq \ \bigcup_{w\in {\mathcal Z}} B_{p^{-s_{*}}}(w) .
\end{equation}

From \eqref{F0}, \eqref{F1}, \eqref{Fkk}, \eqref{Fk1} and \eqref{Fkr}, we see that
$$
\bigl| F_0 \cup F_1 \cup \cdots \cup F_{n-3} \bigr | \ \le \ C_d \, p^{-s_{*}} \ \le \ C_d \, H^{-1}
$$
and so
\begin{equation}\label{I-F(n-2)}
I^{s_{*}} \ = \ \int_{F_{n-2}} {\rm e} (P(z)) \, dz \ + \ O_d(H^{-1}).
\end{equation}

\section{Proof of Theorem \ref{H-main} - the final part}\label{H-final}

For $z \in F_{n-2}$, we have
\begin{equation}\label{F(n-2)-condition}
|P^{(n)}(z)/n!|^{1/n} 
< \cdots < |P'''(z)/6|^{1/3} < |P''(z)/2|^{1/2} \ = \ J_P(z) \ \le \ p^{s_{*}},
\end{equation}
with $|P'(z)| \le p^{s_{*}}$, \ $p^{s_{*}-1} < J_P(z)$ and $H\le H_P(z)$. 
It is not necessarily true that $|P'(z)| \le |P''(z)/2|^{1/2}$ but if this is the case, then the argument in the previous
section works.

So we decompose
$F_{n-2} = F_{n-2}' \cup F_{n-2}''$ where $F_{n-2}' = \{z\in F_{n-2}: |P'(z)| \le |P''(z)/2|^{1/2}\}$
and hence for $z \in F_{n-2}'$, the previous argument applies and gives us a zero
$z_{*} \in {\mathbb Q}_p$ with $|z_{*} - z| \le J_P(z)^{-1} < p^{-s_{*}+1}$, implying
$|z_{*} - z| \le p^{-s_{*}}$. Hence $z_{*}\in {\mathbb Z}_p$ and
$$
F_{n-2}' \subseteq \bigcup_{w\in {\mathcal Z}} B_{p^{-s_{*}}}(w) \ \ {\rm which \ implies} \
|F_{n-2}'| \ \le \ C_d \, p^{-s_{*}} \ \le \ C_d \, H^{-1}
$$
and so \eqref{I-F(n-2)} refines to
\begin{equation}\label{I-F(n-2)''}
I^{s_{*}} \ = \ \int_{F_{n-2}''} {\rm e} (P(z)) \, dz \ + \ O_d(H^{-1}).
\end{equation}

For $z\in F_{n-2}''$, we have $|P''(z)/2|^{1/2} < |P'(z)| \le p^{s_{*}}$, implying
$|P''(z)/2| = p^{2s_{*}-1}$ and $|P'(z)| = p^{s_{*}}$. Hence $H_P(z) \equiv p^{s_{*}}$
for $z \in F_{n-2}''$ and we can drop the condition $H \le H_P(z)$ for $z \in F_{n-2}''$. Furthermore
\eqref{F(n-2)-condition} implies
\begin{equation}\label{F(n-2)-conditionv2}
|P^{(k)}(z)/k!| \ \le \ p^{k s^{*} - (k-1)} \ {\rm for} \ 1\le k \le n
\end{equation}
with equality for $k=1$ and $k=2$. The condition \eqref{F(n-2)-condition} also implies
\begin{equation}\label{F(n-2)-conditionv3}
|P^{(k)}(z)/k!| \ < \ p^{s_{*}} \, |P^{(k-1)}(z)/(k-1)!| \ \ {\rm for} \ 2 \le k \le n
\end{equation}
and $z \in F_{n-2}''$. This follows by the following simple induction agument. 

Suppose \eqref{F(n-2)-conditionv2}
and \eqref{F(n-2)-conditionv3} hold for $1 \le j \le k -1$ and set $p^m = |P^{(k}(z)/k!|$
and $p^{\ell} = |P^{(k-1)}(z)/(k-1)!|$ so that $\ell \le (k-1)s_{*} - (k-2)$ holds. 
The condition \eqref{F(n-2)-condition} implies $m/k < \ell/(k-1)$ or 
$$
m < \ell + \ell/(k-1) \le \ell + s_{*} - (k-2)/(k-1) \ < \ \ell + s_{*}
$$
which says $|P^{(k)}(z)/k!| < p^{s_{*}} |P^{(k-1)}(z)/(k-1)!|$. Also
$$
m < \ell + \ell/(k-1) \ \le \ (k-1)s_{*} - (k-2) + s_{*} - (k-2)/(k-1) \ = \ k s_{*} -k + 2 - (k-2)/(k-1),
$$
implying $m \le k s_{*} - k + 1 = k s_{*} - (k-1)$. This establishes
\eqref{F(n-2)-conditionv2} and \eqref{F(n-2)-conditionv3}.

Suppose that $z_0 \in F_{n-2}''$ and consider $z = z_0 + p^{s_{*}}w$ for $w\in {\mathbb Z}_p$.
Arguing as in Lemma \ref{Is}, we Taylor expand (for any $1\le k \le n$)
$$
P^{(k)}(z)/k! \ = \ P^{(k)}(z_0)/k! + \sum_{\ell=1}^{n-k} [P^{(k+\ell)}(z_0)/k!\ell!] (p^{s_{*}} w)^{\ell} \ + \ E
$$
where 
$$
|E| \ < \ \lambda \, p^{-(n-k+1)s_{*}} \ = \ |P^{(n)}(z_0)/n!| \ p^{-(n-k+1)s_{*}} 
$$
and by \eqref{F(n-2)-conditionv3},
$|P^{(n)}(z_0)/n!| < p^{(n-k)s_{*}} |P^{(k)}(z_0)/k!|$ so that
$$
|E| \ < \ p^{(n-k)s_{*}} \, |P^{(k)}(z_0)/k!| \,
p^{-(n-k)s_{*}} p^{-s_{*}} \ = \ p^{-s_{*}} \ |P^{(k)}(z_0)/k!|.
$$
Also each term in the sum satisfies 
$$
|[P^{(k+\ell)}(z_0)/k!\ell!] (p^{s_{*}} w)^{\ell}| \ < \  p^{\ell s_{*}} |P^{(k)}(z_0)/k!| p^{-\ell s_{*}}
\ = \ |P^{(k)}(z_0)/k!|
$$
by \eqref{F(n-2)-conditionv3}.
Hence $|P^{(k)}(z)/k!| = |P^{(k)}(z_0)/k!|$.

Therefore from \eqref{I-F(n-2)''},
\begin{equation}\label{I-calF}
I^{s_{*}} \ = \ \sum_{t\in {\mathcal F}} \ \int_{B_{p^{-s_{*}}}(t)} {\rm e} (P(z)) \, dz \ + \ O_d(H^{-1})
\end{equation}
where 
$$
{\mathcal F} \ = \ \bigl\{t\in R_{s_{*}} : \eqref{F(n-2)-condition} \ {\rm holds \ for} \
z = t, \ \  |P'(t)| = p^{s_{*}} \ \ {\rm and} \ |P''(t)/2| = p^{2s_{*} -1} \bigr\}.
$$

In what follows it will be important to first make the reduction to the case $s_{*}\ge 2$
(recall we have reduced
to the case $s_{*}\ge 1$). Suppose that $s_{*} = 1$. Then 
${\mathcal F} \subset {\mathbb Z}/p{\mathbb Z}$.
Recall that
$P(z) = c_d z^d + \cdots + c_1 z$ and $\lambda = \max |c_j| \ge p$. Also recall
$n$ was chosen so that $|c_n| = \lambda$
and $|c_j| < \lambda$ for all $j>n$. Hence $|P^{(n)}(z)/n!| \equiv |c_n| = \lambda$
and since $|P^{(n)}(z)/n!| \le p^{n s_{*} - (n-1)} = p$ for $z\in {\mathcal F}$, 
we see that $\lambda = |c_n| = p$. 

Therefore if ${\mathcal F}\not= \emptyset$, then
for any $t\in {\mathcal F}$, we have $H = |P'(t)| = p$, $J_P(t) = |P''(t)/2| = p$ and 
$|P^{(k)}(t)/k!| \le p$ for all $k\ge 1$. Hence \eqref{P-derivatives} holds in this case
(when ${\mathcal F} \not= \emptyset$).

We claim that
\begin{equation}\label{p-F}
\# ({\mathbb Z}/p{\mathbb Z} \setminus {\mathcal F}) \  \lesssim_d \ 1.
\end{equation}
Let 
$$
S_n \ = \ \bigl\{t \in {\mathbb Z}/p{\mathbb Z}: |P^{(n)}(t)/n!|^{1/n} < |P^{(n-1)}(t)/(n-1)!|^{1/(n-1)} \  \bigr\}
$$
and inductively, for $2\le k \le n$, define
$$
S_k \ =  \bigl\{ t \in S_{k+1} :  |P^{(k)}(t)/k!|^{1/k} < |P^{(k-1)}(t)/(k-1)!|^{1/(k-1)} \ \bigr\}.
$$
Hence $S_3$ is precisely the set of $t \in {\mathbb Z}/p{\mathbb Z}$ where the string of inequalities in \eqref{F(n-2)-condition}
holds. 

We claim that $S_n = \{t\in {\mathbb Z}/p{\mathbb Z} : |P^{(n-1)}(t)/(n-1)!| = p \}$ and more generally,
$$
S_k \ = \ \bigl\{ t\in {\mathbb Z}/p{\mathbb Z} : |P^{(j)}(t)/j!| = p, \ {\rm for \ all} \ k\le j \le n \bigr\}.
$$
In fact since $|P^{(n)}(t)/n!| \equiv |c_n| = p$, then $t \in S_n$ precisely when 
$$
p^{n/(n-1)} \ < \ |P^{(n-1)}(t)/(n-1)!| \ \ (\le p)
$$
or presisely when $|P^{(n-1)}(t)/(n-1)!| = p$. By induction, the same is true for all $S_k$.
Therefore in this case, $t \in {\mathcal F}$ precisely when
$$
|P^{(k)}(t)/k!| \ = \ p \ \ {\rm for \ all} \ 1\le k \le n.
$$
Set $b_j = p c_j$ so that $|b_j| = p^{-1} |c_j| \le 1$ and $Q(t) = p P(t) \in {\mathbb Z}_p[X]$. Hence 
$t \in {\mathbb Z}/p{\mathbb Z} \setminus {\mathcal F}$ precisely when 
$$
Q^{(k)}(t)/k! \ \equiv \ 0 \ \ {\rm mod} \ \ p, \ \ \ {\rm for \ some} \ 1\le k \le n.
$$
Since these are polynomial equations over a field, this failure can only happen $O_d(1)$ times.
This proves the claim \eqref{p-F}.

Hence if $Q(t) = p P(t)$,
$$
I^{s_{*}} \ = \sum_{t\in {\mathcal F}} \int_{B_{p^{-1}}(t)} {\rm e} (P(z)) \, dz \ + O_d(H^{-1}) \ = \ 
p^{-1} \sum_{t\in {\mathcal F}} e^{2\pi i Q(t)/p} \ + \ O_d(H^{-1})
$$
$$
= \ p^{-1} \sum_{t=0}^{p-1} e^{2\pi i Q(t)/p} \ + \ O_d(H^{-1})
$$
since $H = p^{s_{*}} = p$ in this case. Since $p$ does not 
divide the $n$th coefficient of $Q$, we see that we are in the $\epsilon = 1$ senario
of Theorem \ref{H-main} when ${\mathcal F} \not= \emptyset$.

So we may assume $s_{*} \ge 2$.

We now decompose ${\mathcal F} = {\mathcal F}' \cup {\mathcal F}''$ where
${\mathcal F}' = \{ t\in {\mathcal F}: |t| \ge p^{-s_{*}+2}\}$.

\section{Establishing $\# {\mathcal F}' \lesssim_d 1$}\label{establishing}  

We reduce to the case of equality in \eqref{F(n-2)-conditionv2} for $t\in {\mathcal F}'$; 
that is, 
$|P^{(k)}(t)/k!| = p^{ks_{*} -(k-1)}$. To do this, we decompose 
${\mathcal F}' = G_3 \cup \cdots \cup G_n \cup G'$ where
$$
G_3 \ := \ \{t\in {\mathcal F}' : |P^{'''}(t)/3!| \le p^{3s_{*} - 3} \} 
$$
and inductively, we define
$$
G_k \ := \ \{t \in {\mathcal F}' \setminus \cup_{\ell=1}^{k-1} G_{\ell} 
: |P^{(k)}(t)/k!| \le p^{ks_{*} - k} \}
$$
for $3\le k \le n$. We note that if $t\in {\mathcal F}'\setminus G_3$, 
then $|P'''(t)/3!| = p^{3 s_{*} - 2}$ and
more generally, if $t \in G_k$, then
\begin{equation}\label{Gk}
|P^{(\ell)}(t)/\ell!| \ = \ p^{\ell s_{*} - (\ell -1)} \ {\rm for} \ 1\le \ell \le k-1, \ \ {\rm and} \ \ 
|P^{(k)}(t)/k!| \le p^{k s_{*} - k}.
\end{equation}
Finally if $t \in G'$, then we have $|P^{(k)}(t)/k!| = p^{k s_{*} - (k-1)}$ for all $1\le k \le n$.

For $t\in G_3$, we apply Lemma \ref{hensel-L} with $L = n-1$ to $\phi = P'$ and $t_0 = t$.
We have $\delta = |P'(t) P''(t)^{-1} P^{(n)}(t)^{-1}|$ and so
$$
\lambda \delta \  = \ p^{-s_{*} + 1} \ < \ 1
$$ 
since $s_{*} \ge 2$. Here we used the fact that $|P'(t)| = p^{s_{*}}$ and $|P''(t)/2| = p^{2s_{*}-1}$
when $t\in {\mathcal F}'$.

Next we need to verify\footnote{For the rest of this section, we again drop factorials for notational
convenience.} $\delta_1 < 1$. Since $|P'''(t)| \le p^{3s_{*} - 3}$ for $t\in G_3$, we have  
$$
\delta_1 = |\phi(t_0) \phi'(t_0)^{-2} \phi''(t_0)| \ = \ |P'(t) P''(t)^{-2} P'''(t)| 
\le p^{s_{*} - 4s_{*}+2 + 3 s_{*} -3}
= p^{-1}
$$
and so $\delta_1 < 1$. Also for $2\le \ell \le n-1$, we need to verify that $\delta_{\ell}\le 1$
where $\delta_{\ell} = |\phi(t_0) \phi'(t_0)^{-1} \phi^{(\ell+1)}(t_0) \phi^{(\ell)}(t_0)^{-1}|$. But by \eqref{F(n-2)-conditionv3},
$$
|\phi^{(\ell+1)}(t_0)\phi^{(\ell)}(t_0)^{-1}| = |P^{(\ell+2)}(t) P^{(\ell +1)}(t)^{-1}| < p^{s_{*}}
$$
and so
$$
\delta_{\ell} < p^{s_{*}} |P'(t) P''(t)^{-1}| \ = \ p^{s_{*} + s_{*} - 2s_{*} +1} \ = \ p,
$$
implying $\delta_{\ell} \le 1$. Hence there exists a zero $t_{*} \in {\mathbb Q}_p$ of $P^{'}$
with $|t - t_{*}| \le |P'(t) P''(t)^{-1}| = p^{-s_{*}+1}$.

Since distinct elements $t, t' \in {\mathcal F}'$ are separated $|t - t'| \ge p^{-s_{*}+2}$,
we see that
\begin{equation}\label{G3}
\# G_3  \ \lesssim_d \ 1.
\end{equation} 

For general $3\le k \le n$ and $t\in G_k$, we apply Lemma \ref{hensel-L} with $L= n - k+2$
to $\phi = P^{(k-2)}$ and $t_0 = t$. 

We have $\delta = |P^{(k-2)}(t) P^{(k-1)}(t)^{-1} P^{(n)}(t)^{-1}|$ and so
$$
\lambda \delta \  = \  p^{-s_{*} + 1} \ < \ 1
$$ 
since $s_{*}\ge 2$.

Next we need to verify $\delta_1 < 1$. Since $|P^{(k)}(t)| \le p^{ks_{*} - k}$ for $t\in G_k$, 
we have  
$$
\delta_1 \ = \ |\phi(t_0) \phi'(t_0)^{-2} \phi''(t_0)| \ = \ |P^{(k-2)}(t) P^{(k-1)}(t)^{-2} P^{(k)}(t)|
$$
$$ \le \ 
p^{(k-2)s_{*} - 2(k-1)s_{*}+ k s_{*} + 2(k-1) - k - (k-3)}
\ = \ p^{-1}
$$
by \eqref{F(n-2)-conditionv2}
and so $\delta_1 < 1$. Also for $2\le \ell \le n-1$, we need to verify that $\delta_{\ell}\le 1$
where $\delta_{\ell} = |\phi(t_0) \phi'(t_0)^{-1} \phi^{(\ell+1)}(t_0) \phi^{(\ell)}(t_0)^{-1}|$. But from 
\eqref{F(n-2)-conditionv3},
$$
|\phi^{(\ell+1)}(t_0)\phi^{(\ell)}(t_0)^{-1}| = |P^{(k+\ell-1)}(t) P^{(k+\ell -2)}(t)^{-1}| < p^{s_{*}}
$$
and so
$$
\delta_{\ell} < p^{s_{*}} |P^{(k-2)}(t) P^{(k-1)}(t)^{-1}| \ = \ p^{s_{*} + (k-2)s_{*} - (k-1)s_{*} +1} \ = \ p,
$$
implying $\delta_{\ell} \le 1$. Hence there exists a zero $t_{*} \in {\mathbb Q}_p$ of $P^{(k-2)}$
with $|t - t_{*}| \le |P^{(k-2)}(t) P^{(k-1)}(t)^{-1}| = p^{-s_{*}+1}$. 

Since distinct elements $t, t' \in {\mathcal F}'$ are separated $|t - t'| \ge p^{-s_{*}+2}$,
we see that
\begin{equation}\label{Gk}
\# G_k \ \lesssim_d \ 1.
\end{equation}

Finally we are left to treat $j\in G'$ where $|P^{(k)}(t)/k!| = p^{k s_{*}- (k-1)}$ for
all $1\le k \le n$. Here we simply employ Lemma \ref{hensel-L} with $L=1$ to $\phi = P^{(n-1)}$ and $t_0 = t$.
Note that
$$
\lambda \delta \ = \ \lambda |P^{(n-1)}(t) P^{(n)}(t)^{-2}|  \ = \  |P^{(n-1)}(t) P^{(n)}(t)^{-1}| \ = \
p^{-s_{*} +1} \ < \ 1
$$
since $s_{*}\ge 2$.
Hence there exists a zero $t_{*} \in {\mathbb Q}_p$ of $P^{(n-1)}$
with $|t - t_{*}| \le |P^{(n-1)}(t) P^{(n)}(t)^{-1}| = p^{-s_{*}+1}$.

Since distinct elements $t, t' \in {\mathcal F}'$ are separated $|t - t'| \ge p^{-s_{*}+2}$,
we see that
\begin{equation}\label{G'}
\# G' \ \lesssim_d \ 1.
\end{equation}

The bound $\# {\mathcal F}' \lesssim_d 1$ follows from \eqref{G3}, \eqref{Gk} and \eqref{G'}.

Hence from \eqref{I-calF},
\begin{equation}\label{I-calF''}
I^{s_{*}} \ = \ \sum_{t\in {\mathcal F}''} \ \int_{B_{p^{-s_{*}}}(t)} {\rm e} (P(z)) \, dz \ + \ O_d(H^{-1})
\end{equation}
since $H \le p^{s_{*}}$.

\section{The final push}\label{push}

It remains to treat the sum 
$$
\sum_{t\in {\mathcal F}''} \int_{B_{p^{-s_{*}}}(t)} {\rm e} (P(z)) \, dz \ =: \ \sum_{t\in {\mathcal F}''} I(t).
$$ 
Changing variables, we have
$$
I(t) \ = \ p^{-s_{*}} {\rm e} (P(t)) \int_{u \in {\mathbb Z}_p} 
{\rm e} \bigl( \sum_{k\ge 1} [P^{(k)}(t)/k!] (p^{s_{*}} u)^k \bigr) \, du \ = \ p^{-s_{*}} {\rm e} (P(t))
$$
since each term $[P^{(k)}(t)/k!]  (p^{s_{*}} u)^k \in {\mathbb Z}_p$ which holds since
$|P^{(k)}(t)/k!| p^{-k s_{*}} \le [J(t) p^{-s_{*}}]^k \le 1$.

Since for $t\in {\mathcal F}'', \ t = p^{s_{*} - 1} w$ for some $w \in {\mathbb Z}/p{\mathbb Z}$, 
we can view ${\mathcal F}''$ as a subset of ${\mathbb Z}/p{\mathbb Z}$ with $w \in {\mathcal F}''$ if and only if
$$
|P^{(k)}(p^{s_{*}-1} w)/k!|^{1/k} < |P^{(k-1)}(p^{s_{*}-1}w)/(k-1)!|^{1/(k-1)}
\ \ {\rm for} \ 3\le k\le n
$$
with $|P''(p^{s_{*}-1}w)/2| = p^{2s_{*} -1}$ and $|P'(p^{s_{*}-1} w)| = p^{s_{*}}$.

Recall $P(t) = c_d t^d + \cdots + c_1 t$ with $\lambda = \max |c_j| = |c_n|$. 
By \eqref{F(n-2)-conditionv2}, we have
\begin{equation}\label{ksu}
|P^{(k)}(p^{s_{*}-1}w)/k!| \ \le \ p^{ks_{*} - (k-1)} \ \ {\rm for \ all} \ \ 1\le k \le d
\end{equation}
and this verifies \eqref{P-derivatives} for points $p^{s_{*}-1}w \in {\mathcal F}''$.
Strictly speaking \eqref{F(n-2)-conditionv2} implies that \eqref{ksu} holds for $1\le k \le n$.
However for $k>n$,
$$
|P^{(k)}(p^{s_{*}-1}w)/k!| \ \le \ \lambda \ = \ |c_n| \ \equiv \ |P^{(n)}(p^{s_{*}-1}w)/n!| \ \le \
p^{n s_{*} - (n-1)} \ \le \ p^{ks - (k-1)}
$$
and this puts us in position to see that \eqref{I-single-refined} holds 
when ${\mathcal F}'' \not= \emptyset$.

Our final analysis
relies on 2 key claims.

{\bf Claim 1}: For $t = p^{s_{*}-1} w \in {\mathcal F}''$, we have $|c_j| \le p^{j s_{*} - (j-1)}$ for each $1\le j \le d$
with equality for some $j\ge 2$. 

To prove the claim, we assume ${\mathcal F}'' \not= \emptyset$ and fix $p^{r-1}u \in {\mathcal F}''$.
Also we may assume $j\le n$ since $|c_j| \le |c_n|$ for $j\ge n$.

First suppose
that $|c_n| \ge p^{ns_{*} - n+2}$. We have 
$$
p^{ns_{*} - n+2} \ \le \ |c_n| \ = \ |P^{(n)}(p^{s_{*}-1}u)/n!| \ \le \ p^{n s_{*} - (n-1)}
$$
which is a contradiction. Hence $|c_n| \le p^{n s_{*} - (n-1)}$. 

Now suppose by induction that $|c_j| \le p^{j s_{*} - (j-1)}$ for all $k<j\le n$ and by contradiction,
$|c_k| \ge p^{ks_{*} -k+2}$. We have
$$
P^{(k)}(p^{s_{*}-1}u)/k! \ = \ c_k + \sum_{\ell=1}^{n-k} [c_{k+\ell}/k!\ell!] (p^{s_{*}-1}u)^{\ell} \ + \ E
$$
where 
$$
|E| \ \le \ \lambda p^{-(s_{*}-1)(n-k+1)} \ = \ |c_n| p^{-(s_{*}-1)(n-k+1)} \ \le \ p^{n s_{*} - (n-1)} p^{-(s_{*}-1)(n-k+1)},
$$ 
implying $|E| \le p^{k s_{*} - (k-1)} p^{-s_{*} + 2} \le p^{k s_{*} - (k-1)} < |c_k|$ since $s_{*} \ge 2$.

Also for $1\le \ell \le n-k$,
$|c_{k+\ell}| \le p^{(k+\ell)r - (k+\ell-1)}$, implying each term in the above sum has the bound
$$
|c_{k+\ell}| p^{-\ell (s_{*}-1)} \ \le \ p^{(k+\ell)s_{*} - (k+\ell -1)} p^{-\ell(s_{*}-1)} \ = \ p^{ks_{*} - (k-1)} \ < \ |c_k|.
$$
Hence $|P^{(k)}(p^{s_{*}-1}u)/k!| = |c_k| \ge p^{k s_{*} -k+2}$, contradicting \eqref{ksu}.
This shows that $|c_j| \le p^{js_{*} - (j-1)}$ for all $1\le j \le n$.

Since $p^{s_{*}-1}u \in {\mathcal F}''$, we have
\begin{equation}\label{2s}
p^{2s_{*} -1} \ = \ |P^{''}(p^{s_{*}-1}u)/2| \ = \ \bigl|c_2 + \sum_{\ell=1}^{n-2} [c_{2+\ell}/2 \ell!] 
(p^{s_{*}-1}u)^{\ell} \ + \ E \bigr|
\end{equation}
where $|E| \le \lambda p^{-(n-1)(s_{*}-1)} = |c_n| \, p^{-(n-1)(s_{*}-1)} \le p^{s_{*}}$.

Let us now see that \eqref{2s} implies that
equality $|c_j| = p^{js_{*} - (j-1)}$ holds for some $2\le j\le n$. In fact if $|c_j| < p^{js_{*} - (j-1)}$ for all $2\le j \le n$,
then 
$$
|E| \ \le \ |c_n| \, p^{-(n-1)(s_{*} -1)} \ < \ p^{s_{*}} \ \le \ p^{2s_{*} -1}
$$ 
since $s_{*}\ge 1$. Also 
each term in the sum in \eqref{2s} has the bound 
$$
|c_{2+\ell}| \, p^{-\ell(s_{*}-1)} \ < \ p^{(2+\ell)s_{*} - (\ell +1)} p^{-\ell(s_{*}-1)} \ = \ p^{2s_{*}-1},
$$
which contradicts \eqref{2s}. The completes the proof of Claim 1.

{\bf Claim 2}: We have $\# S \lesssim_d 1$ where $S$ consists of those $w \in {\mathbb Z}/p{\mathbb Z}$ where 
$p^{s_{*}-1} w \notin {\mathcal F}''$.

The proof of this claim is identical to the proof of \eqref{p-F}. The elements 
$w \in {\mathbb Z}/p{\mathbb Z}$ where $p^{s_{*}-1}w \in {\mathcal F}''$ are precisely
those elements which solve the equations
$$
|P^{(k)}(p^{s_{*}-1}w)/k!| \ = \ \lambda \ \ {\rm for \ all} \ 1\le k \le n.
$$
Set $b_j = \lambda c_j$ so that $|b_j| = \lambda^{-1} |c_j| \le 1$ and $Q(t) = \lambda P(t) \in {\mathbb Z}_p[X]$. Hence 
$w \in S$ precisely when 
$$
Q^{(k)}(p^{s_{*}-1}w)/k! \ \equiv \ 0 \ \ {\rm mod} \ \ p \ \ \ {\rm for \ some} \ 1\le k \le n.
$$
Since these are polynomial equations over a field, there are at most $O_d(1)$ solutions.
This proves Claim 2.

When ${\mathcal F}'' \not= \emptyset$, we can use Claim 1 and Claim 2 to 
write the sum $\sum_{t\in {\mathcal F}''} I(t)$ as
$$
p^{-s_{*}} \, \sum_{w=0}^{p-1} {\rm e} (P(p^{s_{*}-1}w)) \ + \ O_d( p^{-s_{*}})
\ = \ p^{-s_{*}} \sum_{w=0}^{p-1} e^{2\pi i Q(w)/p} \ + \ O_d(H^{-1})
$$ 
where $Q(w) = e_1 w + e_2 w^2 + \cdots + e_d w^d$ and $e_j = p^{j s_{*} - (j-1)} c_j$. Hence $|e_j| \le 1$
with equality for some $2\le j \le n$. 

From \eqref{I-reduction} and \eqref{I-calF''}, we can therefore write
\begin{equation}\label{I-final}
I(H) \ = \ \epsilon \, p^{-s_{*}} \sum_{w=0}^{p-1} e^{2\pi i Q(w)/p} \ + \ O_d(H^{-1})
\end{equation}
where not all coefficients of $Q$ are equal to 0 mod $p$. Here $\epsilon \in \{0,1\}$
and $\epsilon =1$ when ${\mathcal F}'' \not= \emptyset$. In this case, we've seen
$H = |P'(z)| = p^{s_{*}}, J_P(z) = |P''(z)/2|^{1/2} = p^{s_{*} -1/2}$ and \eqref{P-derivatives}
holds for any $z \in {\mathcal F}''$. 

This completes the proof Theorem \ref{H-main}.



\section{Proof of Proposition \ref{sublevel-structure-basic}}\label{Prop-structure}

We conclude with proofs of Proposition \ref{sublevel-structure-basic} and Lemma \ref{hensel-L}.
Proposition \ref{sublevel-structure-basic} lies at the heart of the key structural statement about sublevel sets
and Lemma \ref{hensel-L} is what we need to establish this structural statement.

Recall that for Proposition \ref{sublevel-structure-basic}, we have a polynomial $Q \in {\mathbb Q}_p[X]$
and a point $z \in {\mathbb Z}_p$ such that $|Q(z)| \le p^{-L}$ for some $L\ge 1$ and $|Q^{(k)}(z)/k!| \ge 1$
for some $k\ge 1$. And we are looking for a zero $z_{*} \in {\mathbb Q}_p$ of some derivative 
of $Q$ such that $|z - z_{*}| \le p^{-L/k}$.

If $Q(w) = c_0 + c_1 w + \cdots + c_d w^d$, set $\lambda = \max_j |c_j|$. If $\lambda > 1$,
let $n$ be the largest integer such that $|c_n| = \lambda$ and $|c_j| < \lambda$ for all $j>n$.
Then 
$$
Q^{(n)}(w)/n! \ = \ c_n \ + \ (n+1) c_{n+1} w \ + \ \cdots \ + \  d(d-1)\cdots(d-n+1) c_d w^{d-n},
$$
implying $|Q^{(n)}(w)/n!| \equiv |c_n| = \lambda$. Furthermore if $k>n$,
$$
1 \ \le \ |Q^{(k)}(z)/k!| \ < \ \lambda \ = \ |Q^{(n)}(z)/n!|
$$ 
which implies
$|Q^{(k)}(z)/k!|^{1/k} \le |Q^{(n)}(z)/n!|^{1/n}$ and in particular,
\begin{equation}\label{k>n}
|p^{-L} Q^{(k)}(z)/k!|^{1/k} \ \le \ |p^{-L} Q^{(n)}(z)/n!|^{1/n}
\end{equation}
since $L\ge 1$.
We set $n_{*} := n$ if $\lambda > 1$ and
$n_{*} = k$ if $\lambda \le 1$. We also set $\lambda_{+} := \max(1, \lambda)$.
Note that $\lambda_{+} \le |Q^{(n_{*})}(z)/n_{*}!|$.

We consider a number of cases.\footnote{Again we suppress factorials for notational convenience.}

{\bf Case 0}: Suppose
\begin{equation}\label{n}
\bigl| p^{-L} Q^{(n_{*}-1)}(z)\bigr|^{1/(n_{*}-1)} \ \le \ \bigl| p^{-L} Q^{(n_{*})}(z)\bigr|^{1/n_{*}}.
\end{equation}

In this case we apply Lemma \ref{hensel-L} with $L=1$ to $\phi = Q^{(n_{*}-1)}$ and $t_0 = z$.
We have 
$$
\lambda_{+} \delta \ = \ \lambda_{+} |Q^{(n_{*}-1)}(z) Q^{(n_{*})}(z)^{-2}|
$$ 
and so $\lambda_{+} \delta \le |Q^{(n_{*}-1)}(z) Q^{(n_{*})}(z)^{-1} |$
since $\lambda_{+} \le |Q^{(n_{*})}(z)|$. Hence by \eqref{k>n} and \eqref{n},
$$
\lambda_{+} \delta  \ \le \
|p^{-L} Q^{(n_{*})}(z)|^{-1/n_{*}} \ \le \ |p^{-L}Q^{(k)}(z)|^{-1/k} \ \le \ p^{-L/k} \ < \ 1
$$
since $|Q^{(k)}(z)| \ge 1$ and $L\ge 1$. Therefore there exists a zero $z_{*}$ of $Q^{(n_{*}-1)}$ such that
\begin{equation}\label{0}
|z - z_{*}| \ \le \ |Q^{(n_{*}-1)}(z) Q^{(n_{*})}(z)^{-1}| \ \le \ p^{-L/k} .
\end{equation}

{\bf General case j}: Here we suppose $1\le j \le n_{*}$,
\begin{equation}\label{n-j}
\bigl| p^{-L} Q^{(n_{*})}(z)\bigr|^{1/n_{*}} \ < \ \cdots \ < \ |p^{-L} Q^{(n_{*}-j)}(z)|^{1/(n_{*}-j)} 
\end{equation}
and
\begin{equation}\label{j}
 \ \ \bigl| p^{-L} Q^{(n_{*} - j-1)}(z)\bigr|^{1/(n_{*}-j-1)} \ \le \ 
\bigl| p^{-L} Q^{(n_{*} - j)}(z)\bigr|^{1/(n_{*}-j)}. 
\end{equation}

In this case we apply Lemma \ref{hensel-L} with $L=j+1$ to $\phi = Q^{(n_{*}-j-1)}$ and $t_0 = z$.
We have 
$$
\lambda_{+} \delta \ = \ \lambda_{+} |Q^{(n_{*}-j-1)}(z) Q^{(n_{*}-j)}(z)^{-1} Q^{(n_{*})}(z)^{-1}(z)|
$$ 
and so $\lambda_{+} \delta \le |Q^{(n_{*}-j-1)}(z) Q^{(n_{*}-j)}(z)^{-1} |$
since $\lambda_{+} \le |Q^{(n_{*})}(z)|$. Hence by \eqref{k>n} and \eqref{j},
$$
\lambda_{+} \delta  \ \le \
|p^{-L} Q^{(n_{*}-j)}(z)|^{-1/(n_{*}-j)} \ \le \ |p^{-L}Q^{(k)}(z)|^{-1/k} \ \le \ p^{-L/k} \ < \ 1
$$
since $|Q^{(k)}(z)| \ge 1$ and $L\ge 1$. Furthermore,
$$
\delta_1 \ = \ |Q^{(n_{*} - j -1)}(z) Q^{(n_{*}-j)}(z)^{-2} Q^{(n_{*}-j+1)}(z)|
$$
and so by \eqref{n-j} and \eqref{j}, 
we have $\delta_1 < 1$. Hence when $j=1$, we conclude that
there exists a zero $z_{*}$ of $Q^{(n_{*}-2)}$ such that
\begin{equation}\label{j=1}
|z - z_{*}| \ \le \ |Q^{(n_{*}-2)}(z) Q^{(n_{*}-1)}(z)^{-1}| \ \le \ p^{-L/k} .
\end{equation}

When $j\ge 2$, we also need to verify that $\delta_{\ell} \le 1$ for $2\le \ell \le j$ where
$$
\delta_{\ell} = |Q^{(n_{*}-j-1)}(z) Q^{(n_{*}-j)}(z)^{-1} Q^{(n_{*}-j+\ell)}(z) Q^{(n_{*}-j+\ell-1)}(z)^{-1}|
$$
so that
$$
\delta_{\ell} \ \le \ |p^{-L} Q^{(n_{*}-j)}(z)|^{-1/(n_{*}-j)} \, 
|Q^{(n_{*}-j+\ell)}(z) Q^{(n_{*}-j+\ell-1)}(z)^{-1}|.
$$
Consider the following
property:
$$
|Q^{(n_{*}-j+\ell)}(z) Q^{(n_{*}-j+\ell-1)}(z)^{-1}| \ \le \ |p^{-L} Q^{(n_{*}-j)}(z)|^{1/(n_{*}-j)}
\eqno{(P_{\ell})}
$$
so that 
$$
\delta_{\ell} \ \le \ 1 \ \ {\rm when} \ \ (P_{\ell}) \ {\rm holds}.
$$ 
Therefore {\bf if} $(P_{\ell})$ holds for every $2\le \ell \le j$, then we can deduce that there
exists a zero $z_{*}$ of $Q^{(n_{*}-j-1)}$ such that
\begin{equation}\label{j=1}
|z - z_{*}| \ \le \ |Q^{(n_{*}-j-1)}(z) Q^{(n_{*}-j)}(z)^{-1}| \ \le \ p^{-L/k}
\end{equation}
since by \eqref{j},
$$
|Q^{(n_{*}-j-1)}(z) Q^{(n_{*}-j)}(z)^{-1}| \ \le \ |p^{-L} Q^{(n_{*}-j)}(z)|^{-1/(n_{*}-j)} \ \le \
|p^{-L} Q^{(k)}(z)|^{-1/k}.
$$

It remains to consider the subcase where one of the properties $(P_{\ell})$ does {\bf not} hold.

{\bf Subcase j (0)}: Suppose that $(P_j)$ does not hold so that
\begin{equation}\label{P-j-not}
|Q^{(n_{*}-1)}(z) Q^{(n_{*})}(z)^{-1}| \ < \ 
|p^{-L} Q^{(n_{*}-j)}(z)|^{-1/(n_{*}-j)}.
\end{equation}
 In this case, we apply Lemma \ref{hensel-L}
with $L=1$ to $\phi = Q^{(n_{*}-1)}$. Then by \eqref{P-j-not},
$$
\lambda_{+} \delta \le \ |Q^{(n_{*}-1)}(z) Q^{(n_{*})}(z)^{-1}| \ < \  
|p^{-L} Q^{(n_{*}-j)}(z)|^{-1/(n_{*}-j)}
$$
and by \eqref{k>n} and \eqref{n-j}, we see that
$$
\lambda_{+} \delta \ < \ |p^{-L} Q^{(k)}(z)|^{-1/k} \ \le \ p^{-L/k} < 1.
$$
Hence there exists a zero $z_{*}$ of $Q^{(n_{*}-1)}$ such that
$$
|z - z_{*}| \ \le \ |Q^{(n_{*}-1)}(z) Q^{(n_{*})}(z)^{-1}| \ < \ 
|p^{-L} Q^{(n_{*}-j)}(z)|^{-1/(n_{*}-j)}.
$$
The last inequality is \eqref{P-j-not}. Hence
\begin{equation}\label{sub-0}
|z - z_{*}| \ \le \ |p^{-L} Q^{(k))}(z)|^{-1/k} \ \le \ p^{-L/k}.
\end{equation}

{\bf General subcase j}: For $r\ge 2$, suppose that property $(P_{\ell})$ holds for
$r+1 \le\ell \le j$ and $(P_r)$ does not hold so that
\begin{equation}\label{P-r-not}
|Q^{(n_{*}-j+r -1)}(z) Q^{(n_{*}-j+r)}(z)^{-1}| \ < \ 
|p^{-L} Q^{(n_{*}-j)}(z)|^{-1/(n_{*}-j)}.
\end{equation}
 In this case, we apply Lemma \ref{hensel-L}
with $L=r-1$ to $\phi = Q^{(n_{*}-j+r-1)}$. Then by \eqref{P-r-not},
$$
\lambda_{+} \delta \le \ |Q^{(n_{*}-j+r-1)}(z) Q^{(n_{*}-j+r)}(z)^{-1}| \ < \  
|p^{-L} Q^{(n_{*}-j)}(z)|^{-1/(n_{*}-j)}
$$
and by \eqref{k>n} and \eqref{n-j}, we see that
$$
\lambda_{+} \delta \ < \ |p^{-L} Q^{(k)}(z)|^{-1/k} \ \le \ p^{-L/k} < 1.
$$
Furthermore for $1\le m \le  r-2$,
$$
\delta_m = |Q^{(n_{*}-j+r-1)}(z) Q^{(n_{*}-j+r)}(z)^{-1} Q^{(n_{*} -j +r+m)}(z) Q^{(n_{*}-j+r+m-1)}(z)^{-1}|
$$
and so by \eqref{P-r-not} and since $(P_{\ell})$ holds for $r+1 \le \ell \le j$, we have
$\delta_m < 1$. Hence there exists a zero $z_{*}$ of $Q^{(n_{*}-j+r-1)}$ such that
$$
|z - z_{*}| \ \le \ |Q^{(n_{*}-j+r-1)}(z) Q^{(n_{*}-j+r)}(z)^{-1}| \ < \ 
|p^{-L} Q^{(n_{*}-j)}(z)|^{-1/(n_{*}-j)}.
$$
The last inequality is \eqref{P-r-not}. Hence
\begin{equation}\label{sub-r}
|z - z_{*}| \ \le \ |p^{-L} Q^{(k))}(z)|^{-1/k} \ \le \ p^{-L/k}.
\end{equation}

This completes the proof of Proposition \ref{sublevel-structure-basic}.


\section{Proof of Lemma \ref{hensel-L}}\label{hensel-proof}

Finally we end with the proof of Lemma \ref{hensel-L}.

We only give the proof when $L\ge 2$. The proof when $L=1$ is easier. Recall that
$\phi(t) = c_0 + c_1 t + \cdots + c_d t^d \in {\mathbb Q}_p[X]$ and 
$\lambda_{+} := \max(1, \lambda)$ where $\lambda = \max_j |c_j|$.

We define a sequence $\{t_n\}$ recursively by
\begin{equation}\label{recursive}
t_n \ = \ t_{n-1} - \phi(t_{n-1}) \phi'(t_{n-1})^{-1}
\end{equation}
and recall $\delta_1 = |\phi(t_0) \phi'(t_0)^{-2}\phi''(t_0)/2|$ and our hypothesis
$\delta_1 < 1$. We make the
following claim.

{\bf Claim:} \ For every $n\ge 1$,

$(1)_n \ \ |t_n - t_{n-1}| \ \le \ |\phi(t_{0}) \phi'(t_{0})^{-1}| \, \delta_1^{2^{n-1} - 1}, $

$(2)_n \ \ |\phi(t_{n-1})| \ \le \ |\phi(t_0)| \delta_1^{2^{n-1}-1}, $


$(3)_n$ \ for $1\le k \le L$, $|\phi^{(k)}(t_{n-1})/k!| = |\phi^{(k)}(t_0)/k!|$ .

Note the claim implies that for every $n\ge m$,
$$
|t_n - t_m|  \le  \max_{m<j\le n} |t_j - t_{j-1}|  \le  |\phi(t_0)\phi'(t_0)^{-1}| \max_{m<j\le n} \delta_1^{2^{j-1}-1}  
\to 0 \ \ {\rm as} \ m,n \to \infty.
$$
Hence $\{t_n\} \subseteq {\mathbb Q}_{p}$ is Cauchy and so $t_n \to t$ for some $t\in {\mathbb Q}_p$.
Furthermore $|t-t_0| \le |\phi(t_0) \phi'(t_0)^{-1}|$ and $(2)_n$ implies $\phi(t) = 0$. Therefore
it remains to prove the claim.

We now proceed with the proof of the claim.
The claim is true for $n=1$ and so suppose $(1)_j, (2)_j$ and $(3)_j$ holds for all $1\le j \le n$.
Note that $(3)_n$ implies $\phi'(t_{n-1}) \not= 0$ and so $z_n$ is well-defined. 
We begin with proving $(3)_{n+1}$. For $1\le k \le L$, we see
\begin{equation}\label{TS-II}
\phi^{(k)}(t_n) \ = \ \phi^{(k)}(t_{n-1}) + \sum_{j=1}^{L-k} \frac{\phi^{(k+j)}(t_{n-1})}{j!} (t_n - t_{n-1})^j \ + \ R_{L-k,k}
\end{equation}
where by $(1)_n$,
$$
|R_{L-k,k}| \ \le \ \lambda_{+} |t_n - t_{n-1}|^{L-k+1} \ \le \ \lambda_{+} |\phi(t_0)\phi'(t_0)^{-1}| \delta_1^{2^{n-1}-1} 
|t_n - t_{n-1}|^{L-k}
$$
\begin{equation}\label{remainder-II}
\ = \ \lambda_{+} \delta (\delta_1^{2^{n-1}-1}) |\phi^{(L)}(t_0)/L!|  |t_n - t_{n-1}|^{L-k} .
\end{equation}
Furthermore by $(1)_n$ and $(3)_n$, for each $1\le j \le L-k$, 
$$
|\phi^{(k+j)}(t_{n-1})/j! \, (t_n - t_{n-1})^j | \ \le \ \delta_1^{j(2^{n-1}-1)} |\phi(t_0)\phi'(t_0)^{-1}|^j 
|\phi^{(k+j)}(t_0)/(k+j)!| .
$$
By the definition of the $\delta_{\ell}, \, 1 \le \ell \le L-1$ in the statement of Lemma \ref{hensel-L}, we have
$$
|(\phi(t_0)\phi'(t_0)^{-1})^j \phi^{(k+j)}(t_0)/(k+j)!| \ = \ |(\delta_k \cdots \delta_{k+j-1}) \phi^{(k)}(t_0)/k!| \ \le \
|\phi^{(k)}(t_0)/k!|
$$
for $1\le j \le L-k$ and hence
\begin{equation}\label{k+j}
|\phi^{(k+j)}(t_{n-1})/(k+j)! \, (t_n - t_{n-1})^j | \ \le \ \delta_1^{2^{n-1}-1}  |\phi^{(k)}(t_0)/k!| .
\end{equation}

Plugging \eqref{remainder-II} and \eqref{k+j} into \eqref{TS-II}, using the hypotheses
$\lambda_{+} \delta \le 1$ and $\delta_1 < 1$ and the nonarchimedean nature of the $p$-adic absolute value, we see that 
$$
|\phi^{(k)}(t_n)/k!| \ = \ |\phi^{(k)}(t_0)/k!| 
$$
for $1\le k \le L$, establishing $(3)_{n+1}$.

For $(2)_{n+1}$, we expand
$$
\phi(t_n)  \ = \ \phi(t_{n-1}) + \phi'(t_{n-1}) (t_n - t_{n-1}) + \sum_{j=2}^L \frac{\phi^{(j)}(t_{n-1})}{j!} \ (t_n - t_{n-1})^j \ + \
R_L
$$
and since
$\phi(t_{n-1}) + \phi'(t_{n-1}) (t_n - t_{n-1}) = 0$ by definition of $t_n$, we have
$$
\phi(t_n)  \ = \ \sum_{j=2}^L \frac{\phi^{(j)}(t_{n-1})}{j!} \ (t_n - t_{n-1})^j \ + \ R_L
$$
where 
\begin{equation}\label{remainder-IV}
|R_L| \ \le \ \lambda_{+} |t_n - t_{n-1}|^{L+1} \ \le \ \lambda_{+} \delta (\delta_1^{2^{n-1}-1})
 |\phi^{(L)}(t_0)/L!|  |t_n - t_{n-1}|^L 
\end{equation}
as before. For $2\le j \le L$, we have by $(1)_n$ and $(3)_n$,
$$
|\phi^{(j)}(t_{n-1})/j!| |t_n - t_{n-1}|^j \le  |\phi^{(j)}(t_0)/j!| |\phi(t_0) \phi'(t_0)^{-1}|^{j-1} \delta_1^{2^n - 2} |\phi(t_0)\phi'(t_0)^{-1}| .
$$
Proceeding as above, using the definition of the $\delta_{\ell}$'s, we have
$$
|\phi^{(j)}(t_0)/j!| |\phi(t_0) \phi'(t_0)^{-1}|^{j-1} \ = \ (\delta_2\cdots \delta_{j-1}) 
|\phi(t_0) \phi'(t_0)^{-1}\phi''(t_0) /2|,
$$
implying
$$
|\phi^{(j)}(t_{n-1})/j!| |t_n - t_{n-1}|^j \ \le  \delta_1 \delta_1^{2^n - 2} |\phi(t_0)|.
$$
We have a similar estimate for the right hand side of \eqref{remainder-IV} and so, altogether, we have
$$
|\phi(t_n)| \ \le \ |\phi(t_0)| \delta_1^{2^n -1},
$$
completing the proof of $(2)_{n+1}$.

Finally for $(1)_{n+1}$, we use $(2)_{n+1}$,$(3)_ {n+1}$ to see
$$
|t_{n+1} - t_n| \ = \ |\phi(t_n) \phi'(t_n)^{-1}| \ = |\phi(t_n)\phi'(t_0)^{-1}| \ \le \ \delta_1^{2^n -1} |\phi(t_0)|
$$
which establishes $(1)_{n+1}$,
completing the proof of the claim and hence the proof of Lemma \ref{hensel-L}.

\end{document}